\documentclass[12pt]{amsart}
\usepackage[utf8]{inputenc}
\usepackage[T1]{fontenc}
\usepackage{lmodern}
\usepackage{amssymb,amsmath,amsthm,fullpage,dsfont}
\usepackage{pdfsync}
\usepackage{enumerate}
\usepackage{pdfsync}
\usepackage{mathrsfs}
\usepackage[bookmarksnumbered,colorlinks,plainpages,backref]{hyperref}
\usepackage[normalem]{ulem}
\usepackage[dvipsnames]{xcolor}
\usepackage{esint}
\usepackage{transparent}

\numberwithin{equation}{section}

\newcommand{\abs}[1]{\left\vert#1\right\vert}

\newcommand{\ovl}[1]{\overline{#1}}

\newcommand{\R}{\mathbb R}

\newcommand{\N}{\mathbb N}
\newcommand{\C}{\mathbb C}

\newcommand{\F}{\mathcal F}
\newcommand{\M}{\mathcal M}

\newcommand{\1}{\mathds{1}}

\DeclareMathOperator{\supp}{supp}

\newcommand{\vp}{\varphi}

\newcommand{\nn}{\nonumber}



\newtheorem{theorem}{Theorem}[section]

\newtheorem{proposition}[theorem]{Proposition}

\newtheorem{lemma}[theorem]{Lemma}

\newtheorem{corollary}[theorem]{Corollary}

\theoremstyle{definition}

\newtheorem{definition}[theorem]{Definition}

\theoremstyle{remark}

\newtheorem{remark}[theorem]{\textbf{Remark}}


\newcommand{\teal}[1]{\textcolor{black}{#1}}

\newcommand{\olive}[1]{\textcolor{blue}{#1}}

\newcommand{\lla}[1]{\left\{ #1\right\} }

\newcommand{\pare}[1]{\left( #1\right)}
\newcommand{\corc}[1]{\left[ #1\right]}
\newcommand{\ip}[1]{ \left\langle #1 \right\rangle }

\newcommand{\norm}[1]{{\| {#1} \|}}

\newcommand{\Norm}[1]{{\left\| {#1} \right\|}}

\newcommand{\ve}{\varepsilon}

\allowdisplaybreaks

\begin{document}

\title{Cancellation conditions and boundedness of Inhomogeneous Calder\'on-Zygmund operators on local Hardy spaces associate with spaces of homogeneous type}%

\author{Joel Coacalle}
\address{Departamento de Computa\c{c}\~ao e Matem\'atica, Universidade S\~ao Paulo, Ribeir\~ao Preto, SP, 14040-901, Brasil}
\email{jcoacalle@usp.br}

\author {Tiago Picon}
\address{Departamento de Computa\c{c}\~ao e Matem\'atica, Universidade S\~ao Paulo, Ribeir\~ao Preto, SP, 14040-901, Brasil}
\email{picon@ffclrp.usp.br}

\author {Claudio Vasconcelos}
\address{Laboratoire de Math\'ematiques d'Orsay, CNRS UMR 8628, Universit\'e Paris-Saclay, B\^atiment 307, 91405 Orsay Cedex, France}
\email{claudio.vasconcelos@alumni.usp.br}

\subjclass[2000]{42B30, 42B35, 42B20, 43A85}

\keywords{{Hardy spaces, spaces of homogeneous type, approximate atoms, inhomogeneous Calder\'on-Zygmund operators}}

\thanks{The first and the second authors were supported by Funda\c{c}\~ao de Amparo \`a  Pesquisa do Estado de S\~ao Paulo (FAPESP - grant 18/15484-7 and grant 22/02211-8) 
and the second by Conselho Nacional de Desenvolvimento Cient\'ifico e Tecnol\'ogico (CNPq - grant 315478/2021-7)}

\begin{abstract} In this work, we present sufficient cancellation conditions for the boundedness of inhomogeneous Calder\'on-Zygmund type operators on local Hardy spaces defined over spaces of homogeneous type in the sense of Coifman \& Weiss for $ 0<p\leq 1 $. A new approach to atoms and molecules for local Hardy spaces in this setting are introduced with special moment conditions. 
\end{abstract}

\maketitle

\section{Introduction}

The theory of Hardy spaces associated with spaces of homogeneous type was introduced by Coifman \& Weiss in \cite{CoWe77} and it has been extensively studied in several settings and applications. Certainly, there is a vast literature on the subject and any tentative to mention 
it will not be complete.

The spaces of homogeneous type $(X,d,\mu)$ in the sense of Coifman \& Weiss are given by a quasi-metric space $(X,d)$ equipped with a non negative measure $\mu$ satisfying the doubling property:  there exists $ A^\prime>0 $ such that for all $ x\in X $ and $ r>0 $, the control 
\begin{equation}\label{thIIa} 
	\mu(B_d(x,2r))\leq A^\prime \mu(B_d(x,r)).
\end{equation}
holds. 
Several examples of spaces of homogeneous type can be found at \cite{AM,CoWe77}. The authors in \cite{CoWe77} introduced an atomic Hardy space defined on $(X,d,\mu)$ for $0<p\leq 1$, denoted in this work by  $H^{p}_{cw}(X)$, consisting of linear functionals on the dual of Lipschitz space $\mathcal{L}_{1/p-1}(X)$ admitting an atomic decomposition given by
\begin{equation}\label{Hp}
 f=\sum_{j=1}^{\infty} \lambda_{j}a_{j}, \quad \quad \text{in} \,\, \mathcal{L}^{\ast}_{1/p-1}(X)
 \end{equation}
where $\sum_{j=1}^{\infty}|\lambda_{j}|^{p}<\infty$ and $a_{j}$ are measurable functions whose support is contained in a ball $B_j:=B(x_j,r_j)$ and in addition they satisfy the size control $\|a_j\|_{L^{q}}\leq \mu(B_j)^{\frac{1}{q}-\frac{1}{p}}$ for some $1\leq q \leq \infty$ with $p<q$ and the vanishing moment condition $\int_{X}a_{j} \,d\mu=0$. Such functions will be called $(p,q)$-atoms.
The functional $\|f\|_{H^{p}_{cw}}:=\inf\left\{ \left( \sum_{j=1}^{\infty}|\lambda_{j}|^{p} \right)^{1/p} \right\}$, where the infimum is taken over all decompositions satisfying \eqref{Hp}, defines a quasi-norm in $H^{p}_{cw}(X)$ and consequently makes the space complete. We remark that the definition of $H^{p}_{cw}(X)$ is independent of $1\leq q <\infty$ (see \cite[Theorem A]{CoWe77}).
Based on  the classical theory of Hardy spaces on Euclidean spaces, they showed that some singular integral operators naturally bounded on $L^{2}(X)$ ({e.g. the} Riesz transform) have bounded extension from $H^{p}_{cw}(X)$ to $L^{p}(X)$. Several questions have been left open in this paper, in particular regarding a possible maximal characterization of $H^{p}_{cw}(X)$ without any additional geometric assumptions on the tern $(X,d,\mu)$, such as the reverse doubling property on $\mu$ or requiring that $d$ is a metric.
     
With the advent of the orthonormal wavelet basis on $(X,d,\mu)$ due to Auscher and Hyt\"onen in \cite{AusHyto13}, other characterizations of Hardy spaces defined on spaces of homogeneous type were developed along with several applications. In \cite{HeHanLiLYY19}, He et al. presented a complete answer for the mentioned question establishing some characterizations in terms of radial, \textit{grand} and non-tangential maximal functions, wavelet and Littlewood-Paley functions. As an application, they extended to this setting the classical criteria, valid in the Euclidean framework, of atoms and molecules for showing the boundedness of sublinear operators on $H^{p}(X)$, which represents the Hardy space refereed at \cite[pp. 2209]{HeHanLiLYY19} for  $\frac{\gamma}{\gamma+\eta}<p\leq 1$ or any of its equivalent maximal representations presented at \cite[Theorem 3.5]{HeHanLiLYY19}. Here $\gamma$ is the upper dimension on $(X,d,\mu)$ defined at \eqref{upper} below and $\eta$ is the regularity of the splines defined in \cite{AusHyto13}. 
In the same paper, the authors proved that the atomic space $H^{p}_{cw}(X)$ coincides with $H^{p}(X)$ with equivalence between quasi-norms for  $\frac{\gamma}{\gamma+\eta}<p\leq 1$ and coincides with $L^{p}(X)$  when $1<p<\infty$. For $p=1$, the space is strictly contained in $L^{1}(X)$.   

Both atomic and maximal representations are fundamental to extend the boundedness of Calder\'on-Zygmund operators on $H^{p}(X)$, that we will describe in sequel. Let $V_s(X):= C_b^{s}(X)$ the space of $s$-H\"older regular functions with bounded support  equipped with the usual topology, where $s \in (0,\eta)$.  
Following the \cite[Definition 12.1]{AusHyto13}, we say a linear and continuous operator $R: V_{s}(X) \rightarrow V^{\ast}_{s}(X)$ is associated to a Calder\'on-Zygmund kernel
of order $s$ if there exists a distributional kernel $K$ satisfying:
	\begin{itemize}
		\item[(i)] for every $ x,y\in X $ with $ x\neq y $, there exists $C_{1}>0$ such that
\begin{equation}\label{stonga}
		\abs{K(x,y)}\leq C_{1} \, \frac{1}{V(x,y)}, \quad \text{with } V(x,y):= \mu(B(x,d(x,y)));
\end{equation}
		\item[(ii)] for every $ x,y,z \in X $ with $ (2A_0)d(y,z)\leq d(x,z)$ and $ x\neq y $  ($A_{0}$ defined in \eqref{disntance}), there exists $C_{2}>0$ such that
\begin{equation}\label{czstd1}
		\abs{K(x,y)-K(x,z)}+ \abs{K(y,x)-K(z,x)}\leq C_{2}\corc{\frac{d(y,z)}{d(x,z)} }^s \frac{1}{V(x,z)};
\end{equation}
	\item[(iii)] for any $f \in V_{s}(X)$, the operator $R$ has the representation 
	\begin{equation}\label{czs}
		Rf(x)=\int K(x,y)f(y)d\mu(y), \quad \quad \text{for } x \notin \supp\,(f). 
	\end{equation}
	\end{itemize} 
An operator $ R $ is called a Calder\'on-Zygmund operator of type $s$ if $R$ is associated to a Calder\'on-Zygmund kernel of order $s$ 
and it is bounded on $L^{2}(X)$. Operators of this type were characterized by Auscher and Hyt\"onen  in \cite[Theorem 12.2]{AusHyto13}, so called $T(1)$-theorem for spaces  of homogeneous type in the sense of Coifman \& Weiss. Estimates for Calder\'on-Zygmund operators defined on spaces of homogeneous type have been extensively studied in the literature. For instance, it has been known that such operators are bounded in $L^p(X)$ when $1<p<\infty$ and satisfy a weak $L^1(X)$-estimate (see for instance \cite[Theorem 1.10]{DengHan08}).

The following extension for Hardy spaces was stated by Han et al. in \cite[Theorem 1.3]{HanHanLi16}: 

\begin{theorem}[\cite{HanHanLi16}]\label{teo1a} Let $s\in (0,\eta]$ 
and $R$ be a Calder\'on-Zygmund operator of type $s$. Then R extends to a
bounded operator on $H^{p}(X)$ for $\frac{\gamma}{\gamma+s}<p\leq 1$ if and only if $R^{\ast}(1) = 0$.
\end{theorem}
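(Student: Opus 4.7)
The plan is to reduce the operator bound on $H^p(X)$ to a uniform estimate on $(p,2)$-atoms via the atomic/molecular characterization of \cite{HeHanLiLYY19}: the hypothesis $R^*(1)=0$ is exactly what delivers the vanishing moment of $R(a)$ that is missing from the pure Calder\'on-Zygmund estimates. I would handle the two implications separately.

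\emph{Sufficiency.} Assume $R^*(1)=0$ and let $a$ be a $(p,2)$-atom supported in $B=B(x_0,r)$. The first step is to show that $R(a)$ is, up to a uniform constant, a $(p,2,\epsilon)$-molecule centered at $x_0$. Split $R(a)=R(a)\mathbf{1}_{2A_0 B}+R(a)\mathbf{1}_{(2A_0 B)^c}$. For the local piece, the $L^2$-boundedness of $R$ combined with $\|a\|_{L^2}\leq\mu(B)^{1/2-1/p}$ and doubling gives $\|R(a)\mathbf{1}_{2A_0 B}\|_{L^2}\lesssim\mu(2A_0 B)^{1/2-1/p}$. For the far piece, \eqref{czs} applies since $x\notin\supp(a)$, and combining $\int a\,d\mu=0$ with the regularity \eqref{czstd1} yields, for $d(x,x_0)\geq 2A_0 r$,
\[
|R(a)(x)|\leq\int_B|K(x,y)-K(x,x_0)|\,|a(y)|\,d\mu(y)\lesssim\frac{r^s}{V(x_0,x)\,d(x_0,x)^s}\,\mu(B)^{1-1/p}.
\]
Integrating this bound over annuli $2^{k+1}B\setminus 2^k B$ and applying \eqref{thIIa} produces molecular $L^2$-decay of the form $\mu(2^k B)^{1/2-1/p}\cdot 2^{-k\epsilon}$ exactly under the restriction $s>\gamma(1/p-1)$, which is the hypothesis $p>\gamma/(\gamma+s)$. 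The remaining molecular requirement is $\int R(a)\,d\mu=0$, and this is where $R^*(1)=0$ enters: formally $\int R(a)\,d\mu=\langle R(a),1\rangle=\langle a,R^*(1)\rangle=0$, to be justified rigorously through the distributional definition of $R^*(1)$ acting on the mean-zero test function $a$, as set up in \cite{AusHyto13}. The molecule-implies-$H^p$ criterion from \cite{HeHanLiLYY19} then yields $\|R(a)\|_{H^p(X)}\lesssim 1$ uniformly in $a$; for general $f=\sum_j\lambda_j a_j\in H^p(X)$, the $p$-subadditivity of $\|\cdot\|_{H^p}^p$ together with convergence of the atomic sum in $\mathcal{L}^{\ast}_{1/p-1}(X)$ gives $\|Rf\|_{H^p}^p\lesssim\sum_j|\lambda_j|^p$, so $R$ extends boundedly.

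\emph{Necessity.} Conversely, suppose $R$ is bounded on $H^p(X)$. For any $(p,2)$-atom $a$ one then has $R(a)\in H^p(X)$, and the atomic characterization forces $R(a)$ to satisfy the appropriate vanishing-moment condition, which in the weak sense reads $\langle R(a),1\rangle=0$, hence $\langle a,R^*(1)\rangle=0$. Since $(p,2)$-atoms span a dense subspace of the mean-zero test-function class used to define $R^*(1)$, this forces $R^*(1)=0$.

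\emph{Main obstacle.} The delicate step is the interpretation and manipulation of $R^*(1)$: since $1\notin V_s(X)$, the symbol $R^*(1)$ must be understood as a functional on the quotient $V_s(X)/\mathbb{C}$, equivalently on mean-zero test functions, following \cite{AusHyto13}. Turning the formal pairing $\langle R(a),1\rangle=\langle a,R^*(1)\rangle$ into a rigorous identity, and verifying that the molecular decay on homogeneous spaces survives without the Euclidean crutches of translation invariance and reverse doubling, are the points that require the most care.
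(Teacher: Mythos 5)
Your proposal is correct and follows essentially the same route the paper indicates for this cited result (Theorem 1.3 of \cite{HanHanLi16}): sufficiency by showing that $R$ maps atoms to molecules, with the kernel regularity plus the atom's vanishing moment giving the molecular decay under $p>\gamma/(\gamma+s)$ and $R^{*}(1)=0$ supplying the missing cancellation of $Ra$; and necessity from the fact that any element of $L^{2}(X)\cap H^{p}(X)$ has vanishing integral, which is exactly \cite[Proposition 3.3]{HanHanLi16} as the paper points out. The paper itself only sketches this argument and refers to \cite{HanHanLi16} for details, so there is nothing further to compare.
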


\noindent The condition $R^{\ast}(1)$ is understood in the following distributional sense as
\begin{equation*}
\langle R^{\ast}(1),f  \rangle:=\int_{X}Rf(x)d\mu(x), \quad \quad \forall f \in H^{p}(X) \cap L^{2}(X).
\end{equation*}
The method to prove the boundedness in Theorem \ref{teo1a} is given by the classical property that $R$ maps atoms into molecules. The cancellation condition $R^{*}(1)=0$ is necessary since from \cite[Proposition 3.3]{HanHanLi16} if $Rf \in L^2(X)\cap H^{p}(X)$ for  $\frac{\gamma}{\gamma+\eta}<p\leq 1$  then
$Rf \in L^1(X)$ and $\int_{X}Rfd\mu=0$. In contrast to convolution operators, we point out that, in general, there is no reason to think that non-convolution linear operators as \eqref{czs} preserve vanishing moment conditions.

It is well known in the Euclidean setting that if $f \in (L^1 \cap H^{p})(\R^n)$ then $\int_{\R^n}f(x)dx=0$ and then this fact shows that $H^{p}(\R^n)$ is not closed by multiplication of test functions. Motivated by this, Goldberg in \cite{Goldberg1979} introduced a localizable or non-homogeneous version of Hardy spaces in $\R^n$, which is called {\em local Hardy spaces} and denoted by $h^p(\R^n)$. Moreover, $H^p(\R^n)$ is continuously embedded in $h^{p}(\R^n)$, when $p>1$ we have the equivalence $h^p(\R^n)=L^{p}(\R^n)$ with comparable norms, $h^1(\R^n) \subset L^{1}(\R^n)$ strictly, and the following desired property holds: if $\varphi \in C_{c}^{\infty}(\R^{n})$ and $f \in h^{p}(\R^n)$ then $\varphi f \in h^{p}(\R^n)$. From the comparison between $H^{p}(\R^n)$ and $h^{p}(\R^n)$ (see \cite[Lemma 4]{Goldberg1979}), a natural atomic decomposition for $h^p(\R^n)$ arises, in which the atoms require vanishing moment conditions only when their supports are contained in balls $B$ with radius $r(B) < 1$. For $r(B)\ge1$, any moment conditions are required. Very recently, the authors in \cite{DafniPicon22, DafniPicon23} discussed necessary cancellation conditions on $h^{p}(\R^n)$ motivated by the boundedness of certain linear operators. In particular, they characterized the boundedness of inhomogeneous Calder\'on-Zygmund type operators on $h^{p}(\R^n)$ in terms of local Campanato-Morrey type estimates on $R^{\ast}(x^{\alpha})$ for $|\alpha|\leq n\lfloor \left(\frac{1}{p} -1\right) \rfloor$  (see for instance \cite{Nakai2006} for definition of these spaces on $(X,d,\mu)$). Note that for $\frac{n}{n+1}<p\leq 1$, the condition is exactly on $R^{\ast}(1)$. 

In the setting of spaces of homogeneous type, inhomogeneous Calder\'on-Zygmund operators of type $(\nu,s)$ are operators that satisfy conditions \eqref{czstd1}, \eqref{czs}, where the condition \eqref{stonga} is replaced by strong control
\begin{equation}\label{forteb}
\abs{K(x,y)}\leq C\min\lla{\frac{1}{V(x,y)}, \frac{1}{V(x,y)d(x,y)^\nu} }, \quad \forall \, x\neq y.
\end{equation}					  
for some $\nu>0$. Note that at the Euclidean setting, if we take the canonical distance $d(x,y):=|x-y|$ and $\mu$ the Lebesgue measure, then $V(x,y) \approx |x-y|^{n}$. Examples of inhomogeneous Calder\'on-Zygmund operators are given by pseudodifferential operators $OpS^{0}_{1,0}(\R^n)$ (see \cite{DafniPicon22, DafniPicon23}).   

Following the scope of  Goldberg's atomic decomposition for $h^p(\R^n)$ and the construction of $H^{p}_{cw}(X)$, an atomic version of local Hardy spaces, denoted here by $h^{p}_{cw}(X)$, with an appropriate convergence in local Lipschitz spaces, may be obtained in terms of $(p,q)$-atoms where the vanishing moment condition, i.e. $\int_{X}a(x)d\mu=0$, is required only for atoms supported on balls $B(x_{B},r_{B})$ with $r_{B}<1$. 
We denote these type of atoms by local $(p,q)$-atoms. Analogous to Coifman \& Weiss in \cite{CoWe77}, if $R$ is an inhomogeneous Calder\'on-Zygmund operator of order $(\nu,s)$ (see Definition \ref{def:CZO1} below), then $R$ can be extended from $h^{p}_{cw}(X)$ to $L^{p}(X)$ for all $\frac{\gamma}{\gamma+\min\left\{\nu,s\right\}}<p< 1$.
We state this result at Theorem \ref{teo:boundextenCZO1} below. 

A complete study of local Hardy spaces on spaces of homogeneous type was originally presented by Dafni et al. in \cite{DafniMomYue16} when $ p=1 $. In the cited work, the authors defined a local Hardy space, denoted here by $h_{g}^{1}(X)$, via maximal function approach and gave an atomic characterization of this space in terms of  atoms with a special cancellation condition that recovers the natural atomic space given by $h^{1}_{cw}(X)$ defined by action of local $(1,q)$-atoms for $1<q\leq \infty$ analogous to \eqref{Hp} with convergence in $bmo(X)$. Using the orthonormal wavelet basis, in the same spirit of $H^{p}(X)$, He et al. in \cite{HeYangWen21} presented a new maximal definition of local Hardy spaces for $0<p\leq 1$, denoted here by $h^{p}(X)$, in which the associated atomic space (with convergence in an appropriate set of distributions) coincides with $h^{p}_{cw}(X)$. 

The aim of this paper is to present sufficient conditions on $R^{\ast}(1)$ in order to obtain the boundedness of inhomogeneous Calder\'on-Zygmund operators of type $(\nu,s)$ on local Hardy spaces defined on spaces of homogeneous type in the sense of Coifman \& Weiss. Our method is based on a new atomic local Hardy space, denoted by $h^{p}_{\#}(X)$, described in terms of atoms and molecules satisfying approximate moment conditions also called inhomogeneous cancellation conditions, in the same sense of Dafni et al. in \cite{DafniPicon22, DafniPicon23, DafniMomYue16}. Such atomic and molecular structures are fundamental to capture the cancellation expected on $Ra$ in local Hardy spaces, where $a$ is a local $(p,q)$-atom. Moreover, using the maximal characterization of $h^{p}(X)$ we compare the atomic spaces $h^{p}_{cw}(X)$ and $h^{p}_{\#}(X)$ and under a natural assumption on $R^{\ast}(1)$, we  obtain the boundedness of inhomogeneous Calder\'on-Zygmund operators on local Hardy spaces $h^{p}(X)$.       

Our main result in this paper is the following:

\begin{theorem} \label{teo1.2}
Let $0<p<1$ and  $R$ be an inhomogeneous Calder\'on-Zygmund operator of order $ (\nu,s) $. 
If there exists $C>0$ such that for any ball $B(x_{B},r_{B})\subset X$ with $r_{B}<1$ we have that $f:=R^{\ast}(1)$ satisfies
\begin{equation}\label{1.8}
\left(\fint_{B}|f-f_{B}|^{2} d\mu\right)^{1/2} \leq C \mu(B(x_{B},1))^{1-\frac{1}{p}} \mu(B(x_{B},r_{B}))^{{\frac{1}{p}-1}}, 
\end{equation}	
where $\displaystyle f_B= \fint_{B}f\, d\mu$, 
then $R$ can be extended to a bounded operator from $h^{p}(X)$ to itself provided that $\min \left\{\nu,s\right\}>\gamma\left(\frac{1}{p}-1\right)$.
\end{theorem}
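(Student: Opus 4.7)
The plan is to prove the theorem by showing that $R$ sends each local $(p,q)$-atom of $h^{p}_{cw}(X)$ to a molecule of the new atomic space $h^{p}_{\#}(X)$, and then invoke the paper's identification of $h^{p}_{\#}(X)$ with $h^{p}(X)$. I would fix a local $(p,q)$-atom $a$ supported on $B=B(x_B,r_B)$ and split into the cases $r_B\ge 1$ and $r_B<1$. In the case $r_B\ge 1$, no approximate cancellation for $Ra$ is required; the decay of $Ra$ off $B$ is extracted solely from the strong kernel bound \eqref{forteb}, splitting the integral defining $Ra(x)$ for $x\in (2A_0B)^c$ into $d(x,y)\le d(x,x_B)/2$ and its complement, and using that the factor $d(x,y)^{-\nu}$ beats the volume factor when combined with the doubling condition; the $L^2$ size on $2A_0B$ is controlled by the $L^2$-boundedness of $R$ together with the atom size control.

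In the harder case $r_B<1$, I would build an approximate molecule as follows. For the local $L^2$ estimate on the double ball, again use $L^2$-boundedness of $R$. For the decay on the annuli $2^{k+1}A_0B\setminus 2^k A_0B$, I would distinguish the range $2^k r_B \le 1$ (use the regularity estimate \eqref{czstd1} together with $\int a\,d\mu =0$ to obtain the standard molecular decay of order $s$) from the range $2^k r_B > 1$ (use the strong bound \eqref{forteb} and dispense with the moment condition of $a$, gaining decay of order $\nu$ from the factor $d(x,y)^{-\nu}$). Both decays are sharper than the threshold $\gamma(1/p-1)$ by hypothesis, so summation over $k$ gives a finite $h^{p}_{\#}$-norm after the atomic cancellation is addressed.

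The main obstacle, and the step where the hypothesis \eqref{1.8} is essential, is establishing the approximate moment condition required by $h^{p}_{\#}(X)$. Concretely, I would need to control $\int_{\tilde B}Ra\,d\mu$ for the enlarged ball $\tilde B$ of radius comparable to $1$ containing $B$, rewriting it via duality as $\langle R^{\ast}(\1_{\tilde B}),a\rangle$ and then splitting $\1_{\tilde B}=1-\1_{\tilde B^c}$ so that
\begin{equation*}
\int_{\tilde B} Ra\, d\mu = \langle R^{\ast}(1)-f_{\tilde B},a\rangle - \int_X R^{\ast}(\1_{\tilde B^c})(y)\,a(y)\,d\mu(y),
\end{equation*}
using $\int a\,d\mu=0$ to insert the mean $f_{\tilde B}$. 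The first term is bounded by H\"older's inequality, the size estimate for $a$, and the Campanato-Morrey hypothesis \eqref{1.8}, which yields exactly the correct scaling $\mu(\tilde B)^{1-1/p}\mu(B)^{1/p-1}$ matching the molecular norm. The second term is estimated directly from the kernel regularity \eqref{czstd1} by moving $a$'s cancellation against differences $K(y,z)-K(x_B,z)$ for $z\in \tilde B^c$, producing a convergent geometric series in $k$ precisely because $s>\gamma(1/p-1)$.

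Finally, I would assemble the molecular estimates into the inequality $\|Ra\|_{h^{p}_{\#}(X)}\lesssim 1$ uniformly in the atom, and conclude by the atomic/molecular characterization and the identification $h^{p}_{\#}(X)=h^{p}(X)$ developed in the paper, together with density of $h^{p}_{cw}(X)\cap L^{2}(X)$ in $h^{p}(X)$, that $R$ extends boundedly to all of $h^{p}(X)$.
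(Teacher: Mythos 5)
Your overall strategy --- map local atoms to approximate molecules of $h^{p}_{\#}(X)$, decompose molecules back into approximate atoms, and transfer to $h^{p}(X)$ via the identification $h^{p}_{\#}(X)=h^{p}(X)$ --- is the same as the paper's (Propositions \ref{prop:atomtomol3} and \ref{prop:unifboundmol}, Theorems \ref{teo:boundextenCZO} and \ref{prop:comparison}). Your treatment of the approximate moment condition is more roundabout than necessary: since $\int a\,d\mu=0$, the paper simply writes $\int_X Ra\,d\mu=\langle R^{\ast}(1),a\rangle=\langle R^{\ast}(1)-(R^{\ast}1)_B,a\rangle$ and applies Cauchy--Schwarz over $B$ together with \eqref{1.8}; there is no truncation to an enlarged ball $\tilde B$ and hence no second term $\int R^{\ast}(\1_{\tilde B^c})\,a\,d\mu$ to estimate. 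Note also that your subtraction of $f_{\tilde B}$ should be replaced by $f_B$ (harmless, since any constant may be inserted against $a$): the hypothesis \eqref{1.8} controls the oscillation of $f$ about its mean over the \emph{small} ball $B$, and it is the resulting factor $\mu(B)^{1/p-1}$ that cancels $\|a\|_{L^2}\,\mu(B)^{1/2}$ and leaves exactly $\mu(B(x_B,1))^{1-1/p}$, as required by the molecular moment condition.

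The genuine gap is in your final step. Uniform molecular bounds $\|Ra\|_{h^{p}_{\#}}\lesssim 1$ for each atom do not by themselves yield a bounded extension of $R$: for a finite sum $f=\sum_{j=1}^{n}\lambda_j a_j$ you only get $\|Rf\|_{h^{p}_{\#}}\lesssim\big(\sum_{j}|\lambda_j|^p\big)^{1/p}$ for that particular decomposition, whereas $\|f\|_{h^{p}_{cw}}$ (equivalently $\|f\|_{h^{p}}$) is an infimum over all, generally infinite, decompositions; the two need not be comparable in general (the paper cites Bownik's example \cite{Bownik05}, and explicitly warns after Proposition \ref{prop:atomtomol3} that ``atoms to molecules'' is not sufficient here). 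This is precisely why the paper inserts the hypothesis $\norm{\cdot}_{h^{p,2}_{fin}}\approx\norm{\cdot}_{h^{p}_{cw}}$ in Theorem \ref{teo:boundextenCZO} and discharges it in Theorem \ref{cor:last} via \cite[Proposition 7.1]{HeYangWen21}, which rests on the maximal characterization of $h^{p}(X)$. Your appeal to density of $h^{p}_{cw}(X)\cap L^{2}(X)$ in $h^{p}(X)$ does not substitute for this: density only extends a map already known to be bounded on the dense subspace in the $h^{p}$ norm, and that boundedness is exactly what is at issue. You must invoke (or prove) the equivalence of the finite atomic norm with the full norm to close the argument.
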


In the Proposition \ref{prop:well-def-T*1}, we show that $R^{\ast}(1) \in L^{2}_{loc}(X)$ and then the condition given in \eqref{1.8} is well defined. Estimates as \eqref{1.8} for any balls define a type of generalized Campanato space, see for instance \cite{Nakai2006}. The previous theorem is presented as Theorem \ref{cor:last} below, emphasizing the space of distributions where $h^{p}(X)$ is defined. The key of the proof is stated at Theorem  \ref{teo:boundextenCZO}, where \eqref{1.8} and some extra condition on atomic space $h^{p}_{cw}(X)$ are sufficient to show that  the operator can be extended from 
$ h^{p}_{cw}(X)$ to $ h^{p}_{\#}(X)$.    

Our second goal is a version of the Theorem \ref{teo1.2} for $p=1$, where a stronger cancellation condition is assumed.  
\begin{theorem}\label{teo:boundextenCZOp1ga}
	Let $ R $ be an inhomogeneous Calder\'on-Zygmund operator of order $ (\nu,s)$. If there exists $C>0$ such that for any ball $B:=B(x_{B},r_{B})\subset X$ with $r_{B}<1$ we have that $f:=R^{\ast}(1)$ satisfies
	\begin{equation}\label{eq:atomtomol3p1ga}
		\bigg( \fint_B \abs{f-f_B}^2 d\mu \bigg)^{1/2} \leq C \frac{2}{\log(2+{1}/{r_B})},
	\end{equation}
then the operator $ R $ can be extended as a linear bounded operator on $ h^1_g(X) $.
\end{theorem}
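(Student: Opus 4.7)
The plan is to adapt the strategy of Theorem \ref{teo1.2} to the critical exponent $p=1$, where the Campanato control on $R^\ast(1)$ is replaced by the $bmo$-type estimate \eqref{eq:atomtomol3p1ga} with logarithmic weight, using the approximate-atomic framework of \cite{DafniMomYue16}. Since $h^1_g(X)$ coincides with $h^1_{cw}(X)$ with convergence in $bmo(X)$, it is enough to bound $\norm{Ra}_{h^1_g}$ uniformly for local $(1,2)$-atoms $a$ supported on a ball $B=B(x_B,r_B)$. When $r_B\ge 1$, the strong kernel control \eqref{forteb} together with the $L^2$-boundedness of $R$ yields $Ra\in L^1(X)$ with a uniform bound, which embeds into $h^1_g(X)$.

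The substantive case is $r_B<1$, where $\int_X a\, d\mu=0$. The $L^2$-boundedness of $R$ together with doubling gives $\norm{Ra}_{L^2((2A_0)B)}\lesssim \mu(B)^{-1/2}$, while the H\"older estimate \eqref{czstd1} combined with the moment condition on $a$ produces the pointwise tail decay
\begin{equation*}
|Ra(x)|\lesssim \frac{1}{V(x_B,x)}\left(\frac{r_B}{d(x_B,x)}\right)^s,\qquad x\notin (2A_0)B.
\end{equation*}
These two estimates identify $Ra$ as a standard molecule on $B$, apart from a possibly nonzero global mean $c_B:=\int_X Ra\, d\mu$.

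To control $c_B$, I would use $\int_X a\, d\mu=0$ and Cauchy--Schwarz to write
\begin{equation*}
|c_B|=|\ip{R^\ast(1)-(R^\ast(1))_B,\, a}|\le \left(\frac{1}{\mu(B)}\int_B |R^\ast(1)-(R^\ast(1))_B|^2\, d\mu\right)^{1/2}\lesssim \frac{1}{\log(2+1/r_B)},
\end{equation*}
invoking \eqref{eq:atomtomol3p1ga} and $\norm{a}_{L^2}\le \mu(B)^{-1/2}$. Decomposing $Ra=(Ra-c_B\phi)+c_B\phi$ with $\phi:=\mu(B(x_B,1))^{-1}\chi_{B(x_B,1)}$, which is a local $(1,\infty)$-atom on a ball of radius $1$ for which no moment is required, the first term has vanishing integral and admits a convergent atomic expansion in $h^1_{cw}(X)$ via the molecule-to-atom conversion used in Theorem \ref{teo:boundextenCZO}, while the second contributes $|c_B|\lesssim 1$ to the $h^1_{cw}$-quasinorm.

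The main obstacle is assembling the full atomic series so that its $h^1_g$-norm remains uniformly bounded in $a$. The logarithmic control on $c_B$ is just sharp enough to absorb the failure of vanishing mean at each dyadic annulus $2^{k+1}(2A_0)B\setminus 2^k(2A_0)B$, provided one adapts the molecule-to-atom decomposition by peeling off, at each scale, a correction atom whose mean defect is of order $1/\log(2+1/r_B)$. This threshold matches precisely the approximate-atom characterization of $h^1_g(X)$ established in \cite{DafniMomYue16}, and the resulting bound $\norm{Ra}_{h^1_g}\lesssim 1$ is uniform in $a$. The theorem then follows by density of finite atomic sums in $h^1_g(X)$.
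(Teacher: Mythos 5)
Your molecular estimates are essentially the ones the paper uses: the $L^2$ bound near $B$, the annular decay coming from \eqref{czstd1} (resp.\ \eqref{forteb} when $r_B\ge 1$), and the control $|\int Ra\,d\mu|\lesssim 1/\log(2+1/r_B)$ obtained by pairing $a$ with $R^\ast(1)-(R^\ast(1))_B$ are exactly the content of Proposition \ref{prop:atomtomol3p1}, and your peeling-off of the mean is a variant of the molecular decomposition in Proposition \ref{prop:unifboundmolp1}. Two problems remain, one minor and one essential. The minor one: for $r_B\ge 1$ you assert that a uniform $L^1$ bound on $Ra$ ``embeds into $h^1_g(X)$''; this is backwards, since $h^1_g(X)\hookrightarrow L^1(X)$ strictly. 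What actually closes that case is that \eqref{forteb} gives the molecular size conditions on the annuli, and for a ball of radius $\ge 1$ no moment condition is required of the molecule.

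The essential gap is your last sentence: ``the theorem then follows by density of finite atomic sums.'' Uniform boundedness of $R$ on atoms does \emph{not} by itself yield a bounded extension to $h^1_g(X)$: the $h^1_g$-norm of a finite atomic sum can be strictly smaller than the infimum over its \emph{finite} decompositions (this is the Bownik phenomenon \cite{Bownik05}, which the paper recalls), so the estimate $\|Rf\|_{h^1_g}\lesssim\sum_j|\lambda_j|$ for finite sums does not give $\|Rf\|_{h^1_g}\lesssim\|f\|_{h^1_g}$ on the dense subspace, and one also has to show that the candidate limit $\sum_j\lambda_jRa_j$ is independent of the decomposition and actually equals $Rf$. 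This is precisely why Theorem \ref{teo:boundextenCZO} for $p<1$ carries the extra hypothesis $\norm{\cdot}_{h^{p,2}_{fin}}\approx\norm{\cdot}_{h^{p}_{cw}}$, and why for $p=1$ the paper proves the refined Calder\'on--Zygmund decomposition of Theorems \ref{teo:Dafni16-7.7} and \ref{teo:Dafni16-7.6}: every $f\in(h^1_g\cap L^2)(X)$ admits an atomic decomposition converging simultaneously in $h^1_g(X)$ \emph{and} in $L^2(X)$, so that the $L^2$-continuity of $R$ forces $Rf=\sum_j\lambda_jRa_j$ in $L^2$, and the $h^1_g$-limit of the partial sums can then be identified with $Rf$ almost everywhere via the embedding $h^1_g\hookrightarrow L^1$. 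Without this (or an equivalent device) your argument does not produce a well-defined bounded extension; supplying it is the main new ingredient of the paper's proof, not the molecular estimates.
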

It is not difficult to see that the condition \eqref{eq:atomtomol3p1ga} is stronger in comparison to \eqref{1.8} when $ p=1 $. 
We point out the result can be stated replacing $h^1_g(X)$ by $ h^1(X) \cap L^1_{loc}(X)$, since the spaces {are equivalents} with comparable norms (see Proposition \ref{propf}).  
We refer to the works   \cite{DafniPicon22, DafniPicon23}
for a complete discussion of cancellation conditions on $R^{\ast}(1)$ in order to obtain the boundedness of inhomogeneous {Calder\'on}-Zygmund {operators on} $h^p(\R^{n})$.

The organization of the paper is as follows. In Section \ref{preliminares}, we present a brief discussion on spaces of homogeneous type in the sense of Coifman \& Weiss, including a basic material on local Lipschitz spaces. In Section \ref{sec:approxatom}, we introduce a new atomic local Hardy space $h^{p}_{\#}(X)$, extending the notion of $h^{p}_{cw}(X)$, in which local Goldberg's atoms are replaced by approximate atoms satisfying inhomogeneous cancellation conditions. The molecular decomposition and the dual characterization of {$h^{p}_{\#}(X)$} are also presented. The Section \ref{sec:rel} is devoted to {discussing} the relation between $h^{p}_{\#}(X) $ and the local Hardy spaces $h^p(X)$ introduced by He et al. in \cite{HeYangWen21}. In Section \ref{sec:app}, we present boundedness results for inhomogeneous Calder\'on-Zygmund operators on local Hardy spaces, including the proof of Theorem \ref{teo1.2} at the Subsection \ref{sec:5.1} and on Lebesgue spaces in Subsection \ref{sec:5.2}. Finally in Section \ref{sec:p=1}, we present the proof of Theorem \ref{teo:boundextenCZOp1ga} and a formal relation between $ h^1_g(X) $ and $ h^1(X) $.

\section{Preliminaries}\label{preliminares}

\subsection{Spaces of Homogeneous type} Let $ X $ be a nonempty set and $ d:X\times X\to \R_{+} $ a nonnegative function in which there exists $ A_0>0 $ such that for any $ x,y,z\in X $ we have: $ d(x,y) =0\Leftrightarrow  x=y $, $ d(x,y)=d(y,x) $ and
\begin{equation}\label{disntance}
d(x,y) \leq A_0 (d(x,z)+d(z,y)).
\end{equation}
The function $ d $ is called a \emph{quasi-metric} and the pair $ (X,d) $ a \emph{quasi-metric space}. It has to be noted that $ A_0\geq 1 $ and if $ A_0=1 $, then $ d $ defines a metric on $ X $.\\

A quasi-metric $ d $ defines a natural topology $ \tau_d $ in $ X $ where the $ d $-balls defined by $B_d(x,r):=\lla{y\in X:\ d(x,y)<r}$ form a basis. More precisely, a subset $ U \subseteq X $ is an open set if for every $ x\in U $, there exists $ r>0 $ such that $ B_d(x,r) \subset U$. In general, it is not true that $ B_d(x,r) $ is an open set. When there is no risk of confusion, we omit the use of the subscript $ d $ in the notation of balls and we will always denote by $B:=B(x_B,r_B)$ the ball in $X$ centered in $x_B$ with radius $r_B$. \\

A \emph{space of homogeneous type} $ (X,d,\mu) $ is a quasi-metric space $ (X,d) $ along with a nonnegative measure $ \mu $ defined on the $ \sigma $-algebra of subsets of $ X $ which contains all $ d $-balls and the $ \sigma $-algebra generated by $ \tau_d $, such that $\mu$ satisfies the doubling property, that is, there exists $ A^\prime>0 $ such that for all $ x\in X $ and $ r>0 $ we have $ \mu(B_d(x,r)) <\infty$ and 
\begin{equation}\label{thII} 
	\mu(B_d(x,2r))\leq A^\prime \mu(B_d(x,r)).
\end{equation}
In this case, the measure $ \mu $ is called a \emph{doubling measure}. We may also use the simplified notation $\mu(\cdot):=|\cdot|$ along the paper. To avoid any confusion with the Lebesgue measure, we will always use $\mathcal{L}(\cdot)$ to denote this later. \\

In this work we will always assume that $\mu(X)>0 $. As a consequence, since $ X $ can be exhausted by $d$-balls, turning $ (X,\mu) $ into a $ \sigma $-finite space, we get from the doubling condition that $ \mu(B_d(x,r))>0 $ for all $ x\in X $ and $ r>0 $. In particular, if for some $ y\in X $ there exists $ r_y>0 $ such that $ B_d(y,r_y)=\lla{y}$, then $ \mu(\lla{y})>0 $. Moreover, as shown in \cite[Theorem 1.]{MaSe79}, the set $ M=\lla{x\in X: \mu(\lla{x})>0} $ is countable and it is equal to $ I=\lla{ x\in X : B_d(x,r_x)=\lla{x} \text{ for some } r_x>0} $. \\

If for any $ \lambda\in [1,\infty) $, there exists $\gamma>0$ such that
\begin{equation}\label{upper}
	\mu(B(x,\lambda r))\leq A^\prime \lambda^\gamma \mu(B(x,r)),
\end{equation}
for all $ x\in X$ and $ r>0$, we call this constant $\gamma$ the \emph{upper dimension of $ X $}. Note that if the measure $\mu$ is doubling, then condition \eqref{upper} holds with $\gamma=\log_2 A^\prime $.\\

	We denote the \emph{volume functions}  by $V_r(x)=|B(x,r)|$ and $V(x,y)=|B(x,d(x,y))|$.	The standard maximal function is defined as
	\[
	\mathcal{M}f(x):=\sup_{r>0} \frac{1}{\abs{B(x,r)}} \int_{B(x,r)} \abs{f} \,d\mu.
	\]
	It is well known that $ \mathcal{M} $ is bounded on $ L^p(X) $ with $ p\in (1,\infty] $ and bounded from $ L^1(X) $ to $ L^{1,\infty}(X) $ (see \cite[pp. 71-72]{CoWe71}). Next we state some well known results of the volume and maximal functions (see for instance Lemma 2.2 in \cite{HeYangWen21} or \cite{HeHanLiLYY19}).
	\begin{proposition}\label{propfvfm1}\label{prop:fv}
		Let $ (X,d,\mu) $ be a space of homogeneous type. For any $ x,y\in X $ and $ r\in (0,\infty) $ one has
		\begin{enumerate}[i)]
			\item\label{eq:fv1} $ V(x,y)\approx V(y,x) $ and
			\[
			V_r(x)+V_r(y)+V(x,y) \approx V_r(x)+V(x,y) \approx V_r(y)+V(x,y) \approx |B(x,r+d(x,y))|,
			\]
			where the constants in these equivalences are independent of $ x, y $ and $ r $.
			\item\label{eq:fv2} If $ a>0 $ and $ \delta>0 $, we have
			\[
			\int_{d(x,y)\leq \delta} \frac{d(x,y)^a}{V(x,y)}d\mu(y) \leq C_a \delta^a \quad \text{and} \quad  \int_{d(x,y)\geq \delta} \frac{d(x,y)^{-a}}{V(x,y)}d\mu(y) \leq C_a \delta^{-a}
			\]
			where $ C_a>0 $ is independent of $ x $ and $ \delta $.
			\item\label{eq:fv3}
				If $ a>0 $
				\[
				\int_X \frac{1}{V_{r}(x)+V(x,y)}\frac{r^a}{(r+d(x,y))^a}d\mu(y)\leq C_a
				\]
				uniformly in $ x\in X $ and $ r>0 $.
			\item\label{eq:fv4} There exists a constant $ C>0 $ such that for every
			$ x\in X $, $ r\in (0,\infty) $ and $ f\in L^1_{loc}(X) $, we have
			\[
			\int \frac{1}{V_r(x)+V(x,y)}\corc{ \frac{r}{r+d(x,y)} }^\theta \abs{f(y)}d\mu(y)\leq C \mathcal{M}f(x),
			\]
			where $ C$ does not depend of $ x $, $ r $ or $ f $.
		\end{enumerate}
	\end{proposition}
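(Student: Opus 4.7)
The plan is to verify the four statements in turn, all from the doubling property \eqref{thII} and the quasi-triangle inequality \eqref{disntance}; I would not invoke wavelets or any deeper structure.

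For part \eqref{eq:fv1}, I would first show $V(x,y)\approx V(y,x)$ by noting that $B(y,d(x,y))\subset B(x,(A_0+A_0^2)d(x,y))$ and symmetrically, then iterating \eqref{thII} a bounded number of times to absorb the factor $A_0+A_0^2$. The displayed equivalence is proved by splitting into the regimes $r\geq d(x,y)$ and $r<d(x,y)$: in the first regime $V(x,y)\leq V_r(x)$ and $B(x,r+d(x,y))\subset B(x,2r)$, so all quantities are comparable to $V_r(x)$; in the second regime $V_r(x)\leq V(x,y)$ and $B(x,r+d(x,y))\subset B(x,2d(x,y))$, so all quantities are comparable to $V(x,y)$. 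In both cases the symmetric version of the first statement lets us interchange the roles of $x$ and $y$.

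For parts \eqref{eq:fv2} and \eqref{eq:fv3}, I would use a dyadic decomposition. For the first integral in \eqref{eq:fv2}, write
\[
\{y:d(x,y)\leq \delta\}=\bigcup_{k\geq 0}A_k,\qquad A_k:=\{y:2^{-k-1}\delta<d(x,y)\leq 2^{-k}\delta\}.
\]
On $A_k$ one has $V(x,y)\approx V_{2^{-k}\delta}(x)$ by part \eqref{eq:fv1} and $\mu(A_k)\leq V_{2^{-k}\delta}(x)$, so $A_k$ contributes at most $(2^{-k}\delta)^{a}$, and summing yields $C_a\delta^{a}$. The second integral in \eqref{eq:fv2} and the estimate in \eqref{eq:fv3} are treated identically, decomposing $\{d(x,y)\geq \delta\}$ and $X$ respectively into annuli $\{2^{k}\delta\leq d(x,y)<2^{k+1}\delta\}$ and $\{2^{k}r\leq d(x,y)<2^{k+1}r\}$ (with a central ball in the last case); the geometric decay $2^{-ka}$ provides the summability.

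Finally, for \eqref{eq:fv4}, I would use the same annular decomposition as in \eqref{eq:fv3} and estimate, on each annulus $A_k=\{2^{k}r\leq d(x,y)<2^{k+1}r\}$,
\[
\frac{1}{V_r(x)+V(x,y)}\Big[\tfrac{r}{r+d(x,y)}\Big]^{\theta}\lesssim \frac{2^{-k\theta}}{V_{2^{k}r}(x)},
\]
and bound $\int_{A_k}|f|\,d\mu\leq V_{2^{k+1}r}(x)\,\mathcal{M}f(x)\lesssim V_{2^{k}r}(x)\,\mathcal{M}f(x)$ via the doubling property. Summing the geometric series in $2^{-k\theta}$ gives the desired bound with a constant independent of $x,r,f$. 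The central ball $\{d(x,y)<r\}$ is handled in the same way.

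The computations are routine; the only real care needed is with the quasi-triangle constant $A_0$ and the doubling exponent $\gamma$ from \eqref{upper}. The main nuisance, rather than a genuine obstacle, is checking that on each annulus the ratio $V(x,y)/V_{2^{k}r}(x)$ (and its counterpart in part \eqref{eq:fv2}) is controlled uniformly in $k$, which requires iterating \eqref{thII} the correct number of times — once the right dyadic grid is chosen and part \eqref{eq:fv1} is available, everything else reduces to summing geometric series.
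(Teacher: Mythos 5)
Your sketch is correct and is exactly the standard doubling-plus-dyadic-annuli argument; the paper itself gives no proof of this proposition, deferring to Lemma 2.2 of \cite{HeYangWen21} (see also \cite{HeHanLiLYY19}), where the same decomposition into annuli $\{2^{k}\delta\leq d(x,y)<2^{k+1}\delta\}$ and iteration of \eqref{thII} is carried out. The only cosmetic remark is that in part (i) a single application of the quasi-triangle inequality already gives $B(y,d(x,y))\subset B(x,2A_0\,d(x,y))$, so your factor $A_0+A_0^2$ is harmlessly generous, and the degenerate point $y=x$ omitted by your dyadic grid contributes nothing to the integrals in (ii).
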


\subsection{Local Lipschitz spaces}
We denote by $L^\infty_{loc}(X) $ the space of all measurable functions $ f $ defined in $ X $ such that $ f\in L^ \infty(B) $ for all balls $ B \subset X $. Let $ T >0$ be a fixed constant. For each ball $ B=B(x_B,r_B) \subset X $, $ \alpha >0$, and measurable function $f$, we define the functional 
\begin{equation*}
	\mathfrak{N}_{\alpha,\,T}^B(f):=
	\begin{cases}
		\displaystyle{\frac{1}{|B|^\alpha}\sup_{x,y\,\in\, B} \abs{f(x)-f(y)}},  & \text{if } r_B< T; \\
		\displaystyle{\frac{1}{\abs{B}^{\alpha}} \, \norm{f}_{L^\infty(B)}}, & \text{if }r_B\geq T. \\
	\end{cases}
\end{equation*}
The \emph{local Lipschitz space} $ \ell_{\alpha,\,T}(X) $ is defined to be the set of functions $ f\in L^\infty_{loc}(X) $ such that
\[
\norm{f}_{\ell_{\alpha,T}}:= \sup_{B\subset X} \mathfrak{N}_{\alpha,\,T}^B(f)<\infty . 
\]

These spaces correspond to the local version of the classical \emph{Lipschitz spaces}, that we define in the sequel (see \cite{CoWe77} for more details). For any measurable function $f$ defined on $X$, let
\begin{equation}\label{eq:lipspace}
	\mathfrak{N}^{B}_{\alpha}(f):= \sup_{x,y \,\in\, B} \frac{\abs{f(x)-f(y)}}{\abs{B}^\alpha},
\end{equation}
and the space
$
\mathcal{L}_{\alpha}(X) = \left\{ f:X \rightarrow \C: \ \  \displaystyle \sup_{B \subset X} \mathfrak{N}^{B}_{\alpha}(f) < \infty \right\}
$
equipped with the norm
$$
\norm{ f }_{\mathcal{L}_{\alpha}} := \begin{cases}
	\mathfrak{N}^{B}_{\alpha}(f),  & \text{if } \mu(X)=\infty; \\
	\displaystyle\mathfrak{N}^{B}_{\alpha}(f)+\abs{\int_X fd\mu }, & \text{if } \mu(X)=1. \\
\end{cases}
$$
In fact, if $ \mu(X)=\infty $ then the functional $ \norm{ \cdot }_{\mathcal{L}_{\alpha}} $ does not define a norm in $ \mathcal{L}_{\alpha}(X) $. In this case, we may redefine the space taking the quotient by constant functions. It follows from definition that $\norm{f}_{\mathcal{L}_{\alpha}} \leq 3 \norm{f}_{\ell_{\alpha,T}}$. This shows the continuous inclusion $\ell_{\alpha,T}(X) \hookrightarrow \mathcal{L}_\alpha(X)$ holds for every $\alpha>0$ and $T>0$. As pointed out in \cite[pp. 191]{DafniMomYue16} for the space $bmo(X)$, if $T=\infty$ or $T>diam(X)$, we immediately get that $\mathcal{L}_{\alpha}(X)=\ell_{\alpha,\,T}(X)$. So, we may assume that $T<diam(X)$. \\

The motivation behind the definition of the local Lipschitz space as above is that as shown in \cite[Remark 7.2 and Proposition 7.3]{HeYangWen21}, $\ell_{1/p-1, \, 1}(\R^n)= {\Lambda}_{n(1/p-1)}(\R^n)$ for $n/(n+1)<p<1$, where $\Lambda_{\alpha}(\R^n)$ denotes the non-homogeneous Lipschitz space, defined as the set of measurable functions $f\in L^{\infty}(\R^{n})$ such that $$\|f\|_{\Lambda_{\alpha}}:=\|f\|_{L^{\infty}}+\sup_{x\neq y}\frac{|f(x)-f(y)|}{|x-y|^{\alpha}}<\infty.$$ 

\begin{remark}\label{rem:ell}
We claim that if $ 0<T<T'$, then $ \ell_{\alpha,T}(X)=\ell_{\alpha,T'}(X)$ with comparable norms, that is,
		\begin{equation}\label{eq:ellrel}
		{\frac13} \norm{f}_{\ell_{\alpha,T'}}
			\leq \norm{f}_{\ell_{\alpha,T}} \leq
			3\corc{A'\pare{\frac{T'}T}^\gamma}^{\alpha}  \norm{f}_{\ell_{\alpha,T'}}.
		\end{equation}
In fact, let $ f\in \ell_{\alpha,T}(X) $. Clearly, we have 
\begin{equation}\nonumber
\sup_{r_B<T} \frac1{\abs{B}^{\alpha}}\sup_{x,y\in B}\abs{f(x)-f(y)} \leq 
\sup_{r_B<T'} \frac1{\abs{B}^{\alpha}}\sup_{x,y\in B}\abs{f(x)-f(y)} \leq  \norm{f}_{\ell_{\alpha,T'}} 
\end{equation}	
Now, if $T\leq r_B < T' $ and $ x\in B $ then we may control
		\begin{align*}
			\abs{f(x)} \leq \abs{f(x)-f(x_B)}+\abs{f(x_B)} 
			&\leq  \norm{f}_{\ell_{\alpha,T'}} \abs{B}^\alpha +  \norm{f}_{\ell_{\alpha,T'}} \abs{B(x_B,T')}^\alpha \\
			&\leq \norm{f}_{\ell_{\alpha,T'}} \abs{B}^\alpha + \norm{f}_{\ell_{\alpha,T'}}\corc{A'\pare{\frac{T'}{r_B}}^\gamma}^\alpha \abs{ B }^\alpha \\
			&\leq 2\corc{A'\pare{\frac{T'}{T}}^\gamma}^\alpha \norm{f}_{\ell_{\alpha,T'}} \abs{B}^\alpha.
		\end{align*}
Then
\begin{align*}
\sup_{r_B\geq {T}} \frac1{\abs{B}^{\alpha}}\norm{f}_{L^\infty(B)} & \leq \sup_{r_B\geq {T'}} \frac1{\abs{B}^{\alpha}}\norm{f}_{L^\infty(B)} + \sup_{ T \geq r_B\geq {T'}} \frac1{\abs{B}^{\alpha}}\norm{f}_{L^\infty(B)} \\
& \leq 3\corc{A'\pare{\frac{T'}{T}}^\gamma}^\alpha \norm{f}_{\ell_{\alpha,T'}}.
\end{align*}	
Summarizing 	
\begin{equation*}
\norm{f}_{\ell_{\alpha,T}} \leq
			3\corc{A'\pare{\frac{T'}T}^\gamma}^{\alpha}  \norm{f}_{\ell_{\alpha,T'}}.
\end{equation*}
The comparison $\norm{\cdot}_{\ell_{\alpha,T'}} \leq 3 \norm{\cdot}_{\ell_{\alpha,T}}$ follows \textit{bis in idem} as before and it will be omitted. 
\end{remark}

\section{Atomic local Hardy spaces $ h^{p}_\#(X) $}\label{sec:approxatom}

In what follows, we assume that $ (X,d,\mu) $ is a space of homogeneous type and $T>0$ is fixed.

\subsection{Approximate atoms}

\begin{definition} \label{def:pq-function}
Let $0<p<1\leq q \leq \infty$. We say that a $ \mu $-measurable function $ a $ is a \textit{$(p,q,T)$-approximate atom} if it satisfies:
\begin{enumerate}[(i)]
	\item (\emph{Support condition}) There exist $ x_B\in X $ and $ r_B>0 $ such that $ \supp(a)\subset B(x_B,r_B) $;
	\item (\emph{Size condition}) $ \norm{a}_{ {L^q}} \leq {|B(x_B,r_B)|}^{\frac1q-\frac1p}$;
	\item\label{item:pqfunctiii} (\emph{Moment condition}) 
\begin{equation}\label{momento}
	\abs{ \int a \, d\mu }\leq {|B(x_B,T)|} ^{1-\frac1p}.\\
\end{equation}
\end{enumerate}
\end{definition}

\begin{remark}\label{rem:approx_atoms} \textcolor{white}{.}
	\begin{itemize}
		\item[(i)] Condition \eqref{momento} is a local one, that is, from the support and size assumptions we have 
		$$ \abs{ \int a \, d\mu }\leq  \|a\|_{L^{q}} |B(x_{B},r_{B})|^{\frac{1}{q'}}\leq |B(x_{B},r_{B})|^{1-\frac{1}{p}}  $$
		and clearly the moment condition is immediately satisfied for $r_{B}\geq T$. In this sense, this parameter $T$ can be seen as the localization of the atoms, since the moment condition is actually only required when $r_B<T$.
		\item[(ii)] $(p,q,T)$-approximate atoms are comparable for different values of $T$, that means, the decay of moment condition \eqref{momento} is comparable for different values of $T$, since for $T<T'$ we have
		$$ \abs{B(x_B,T)} \leq \abs{B(x_B,T')}\leq A^\prime \pare{\frac{T'}{T}}^\gamma \abs{B(x_B,T)}. $$  
	\end{itemize}
\end{remark}

When condition (iii) is replaced by a \textit{local vanishing moment condition}, that is,
$$
\text{(iii)'} \quad \int a \, d\mu =0, \quad \text{if } r_B<T,
$$
we say the function $a$ is a local $(p,q,T)$-atom. These atoms correspond to the local $(p,q)$-atoms defined by Goldberg \cite{Goldberg1979} in the context of $\R^n$ and were naturally extended in \cite[Definition 4.1]{HeYangWen21} for spaces of homogeneous type (both for $T=1$). If instead of (iii)', $a$ satisfies a \textit{global vanishing moment condition}, that is 
$$
 \quad \int a \, d\mu =0,
$$
then $a$ is called a $(p,q)$-atom. These atoms were considered in \cite[pp. 591]{CoWe77} to define the atomic Hardy spaces $H^{p}_{cw}(X)$ over spaces of homogeneous type. \\

We should also mention that for the case $p=1$ and $q>1$, Dafni et al. considered in \cite[Definition 7.3]{DafniMomYue16} atoms with the following approximate moment condition
$$
\abs{ \int a \, d\mu }\leq \frac{2}{\log(2+T/r_B)}.
$$
An approach of approximate moment conditions for atoms in $h^{p}(\R^n)$ was recently presented by the second and third authors in \cite{DafniPicon22} and \cite{DafniPicon23} (see also the previous works \cite{DafniThesis93,DL19}). We will discuss more about the case $p=1$ in Section \ref{sec:p=1}.

\begin{proposition}\label{prop:pqfunct1}
	Let $0<p<1\leq q \leq \infty$. Then, any $ (p,q,T) $-approximate atom defines a continuous linear functional on $ \ell_{1/p-1,T}(X) $, and its dual $ \ell^{\ast}_{1/p-1,T}$-norm does not exceed 2.
\end{proposition}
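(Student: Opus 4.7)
Plan: I view the claimed functional as integration against $a$, that is, $\langle a,g\rangle := \int a\, g\, d\mu$. Since $a$ is compactly supported (in the ball $B=B(x_B,r_B)$) and belongs to $L^q(B)\subset L^1(B)$, while $g\in L^\infty_{\mathrm{loc}}(X)\subset L^\infty(B)$, the integral is absolutely convergent. The task is therefore to prove the bound
\[
\left|\int a\, g\, d\mu\right|\leq 2\,\|g\|_{\ell_{1/p-1,T}},\qquad g\in \ell_{1/p-1,T}(X),
\]
with $\alpha:=1/p-1>0$. I split the estimate according to whether $r_B<T$ or $r_B\geq T$, mirroring the two cases in the definition of the local Lipschitz norm.

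When $r_B\geq T$, only the size condition is needed. By H\"older's inequality and the definition of $\ell_{\alpha,T}$ applied to the ball $B$ (for which $r_B\geq T$):
\[
\left|\int a\, g\, d\mu\right|\leq \|a\|_{L^q}\,\|g\|_{L^{q'}(B)}
\leq |B|^{1/q-1/p}\,\|g\|_{L^\infty(B)}\,|B|^{1/q'}
\leq \|g\|_{\ell_{\alpha,T}}.
\]

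When $r_B<T$, I split using the average $g_B:=\fint_B g\, d\mu$:
\[
\int a\, g\, d\mu = \int a\,(g-g_B)\, d\mu + g_B\int a\, d\mu.
\]
For the first term, the pointwise inequality $|g(x)-g_B|\leq \sup_{x,y\in B}|g(x)-g(y)|\leq \|g\|_{\ell_{\alpha,T}}|B|^\alpha$ (which uses $r_B<T$) combined with H\"older and the size condition yields exactly $\|g\|_{\ell_{\alpha,T}}$, just as in the previous case. For the second term, observe that $B\subset B(x_B,T)$ and that $B(x_B,T)$ falls into the \emph{other} regime of the local Lipschitz norm (radius $=T$), so $|g_B|\leq \|g\|_{L^\infty(B(x_B,T))}\leq \|g\|_{\ell_{\alpha,T}}|B(x_B,T)|^\alpha$. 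Combining with the approximate moment condition $|\int a\, d\mu|\leq |B(x_B,T)|^{1-1/p}=|B(x_B,T)|^{-\alpha}$, the second term is also bounded by $\|g\|_{\ell_{\alpha,T}}$, giving the total bound $2\|g\|_{\ell_{\alpha,T}}$.

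The only subtle point is ensuring the second term is handled even though the local Lipschitz seminorm controls only oscillations on small balls; the fix is to exploit that the definition of $\ell_{\alpha,T}(X)$ switches to an $L^\infty$ control precisely when $r_B\geq T$, so using $B(x_B,T)$ to dominate $|g_B|$ is natural and gives the right power $|B(x_B,T)|^\alpha$ that cancels against the approximate moment bound. Linearity of the functional is immediate, and the above estimate gives both continuity and the dual-norm bound at most $2$.
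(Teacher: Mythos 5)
Your proof is correct and follows essentially the same approach as the paper: split into the cases $r_B<T$ and $r_B\geq T$, subtract a constant on $B$ to use the oscillation bound plus the approximate moment condition paired with the $L^\infty$ control on $B(x_B,T)$. The only cosmetic difference is that you subtract the average $g_B$ where the paper subtracts the point value $g(x_B)$; the estimates are identical.
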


\begin{proof}
	Let $ a $ be a $ (p,q,T) $-approximate atom supported on a ball $ B=B(x_B,r_B) $ and $ f\in \ell_{1/p-1,T}(X)$. If $ r_B<T $, then by the support, size and moment condition of $ a $, we have
	\begin{align*}
		\abs{\int a(x)f(x) d\mu(x)}&\leq \int_{B} \abs{a(x)}\abs{ f(x)-f(x_B) }d\mu(x)+ \abs{f(x_B)} \abs{ \int_B a(x)d\mu(x) }\\
		& \leq 	\mathfrak{N}_{1/p-1,T}^B(f) \abs{B}^{\frac1p-1} \norm{a}_{L^1} + \norm{f}_{L^\infty(B(x_B,T))} \abs{B(x_B,T)}^{1-\frac1p}\\
		&\leq \mathfrak{N}_{1/p-1,T}^B(f)+\mathfrak{N}_{1/p-1,T}^{B(x_B,T)}(f)\leq  2\norm{f}_{\ell_{1/p-1,T}}.
	\end{align*}
	If $ r_B \geq T$, from the support and size conditions of $ a $, we obtain
	\begin{align*}
		\abs{\int afd\mu}\leq \norm{ f }_{L^\infty(B)} \int \abs{ a } d\mu \leq \norm{ f }_{L^\infty(B)}\abs{ B }^{1-\frac1p}\leq \mathfrak{N}_{1/p-1,T}^{B}(f)\leq \norm{f}_{\ell_{1/p-1,T}}. 
	\end{align*}
	Then, the mapping $ f \mapsto \int_X a f d\mu $ is a continuous linear functional on $ \ell_{1/p-1,T}(X) $, with norm not exceeding $ 2 $.
\end{proof}

\begin{proposition}\label{prop:414}
	Let $0<p<1\leq q\leq \infty$, $ \lla{a_j}_{j \in \N}$ be a sequence of $ (p,q,T) $-approximate atoms and $ \lla{\lambda_j}_j \subset \C$ such that $\displaystyle \sum_{j=1}^{\infty} \abs{\lambda_j}^p <\infty $. Then the series $\displaystyle \sum_{j=1}^{\infty} \lambda_j a_j $ converges in $ \ell^{\ast}_{1/p-1,T} $-norm to a distribution $ g \in \ell^{\ast}_{1/p-1,T}(X) $ such that
	\begin{equation}\label{eq:401}
		\norm{g}_{\ell^{\ast}_{1/p-1,T}}\leq 2 \, \pare{\sum_{j=1}^{\infty} \abs{\lambda_j}^p}^{1/p},
	\end{equation}
	where $ C>0 $ is a constant independent of $ g $, $ a_j $ and $ \lambda_j $. 
\end{proposition}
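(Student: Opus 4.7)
\emph{Proof plan.} My strategy is to reduce the proposition to two ingredients already in place: Proposition~\ref{prop:pqfunct1}, which gives $\norm{a_j}_{\ell^{\ast}_{1/p-1,T}}\leq 2$ for every approximate atom $a_j$, and the elementary subadditivity $\pare{\sum x_j}^p\leq \sum x_j^p$ valid for $0<p\leq 1$ and $x_j\geq 0$. From the latter, the hypothesis $\sum_j |\lambda_j|^p<\infty$ upgrades automatically to $\sum_j |\lambda_j|\leq \pare{\sum_j |\lambda_j|^p}^{1/p}<\infty$, since $|\lambda_j|^p\to 0$ forces $|\lambda_j|\leq 1$ and hence $|\lambda_j|\leq |\lambda_j|^p$ for all sufficiently large~$j$.

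With this absolute summability in hand, I would show that the partial sums $S_N:=\sum_{j=1}^N \lambda_j a_j$ form a Cauchy sequence in the Banach space $\ell^{\ast}_{1/p-1,T}(X)$. For $N<M$, the triangle inequality (available because the topological dual of a normed space is always a Banach space) combined with Proposition~\ref{prop:pqfunct1} yields
\[
\norm{S_M-S_N}_{\ell^{\ast}_{1/p-1,T}} \leq \sum_{j=N+1}^M |\lambda_j|\,\norm{a_j}_{\ell^{\ast}_{1/p-1,T}}\leq 2\sum_{j=N+1}^M |\lambda_j|,
\]
which tends to zero as $N\to\infty$ by the absolute convergence noted above. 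Completeness of $\ell^{\ast}_{1/p-1,T}(X)$ then produces a limit $g\in \ell^{\ast}_{1/p-1,T}(X)$, which is by definition the value of the infinite series in the $\ell^{\ast}_{1/p-1,T}$-norm.

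To finish, I would pass to the limit in the uniform bound
\[
\norm{S_N}_{\ell^{\ast}_{1/p-1,T}}\leq 2\sum_{j=1}^N |\lambda_j|\leq 2\pare{\sum_{j=1}^\infty |\lambda_j|^p}^{1/p},
\]
using continuity of the norm with respect to the convergence $S_N\to g$, which delivers precisely \eqref{eq:401} (with $C=2$). I do not foresee any substantive obstacle: the whole argument is a three-line combination of Proposition~\ref{prop:pqfunct1} and the $p$-to-$1$ summability upgrade, and the only point requiring mild care is observing that $\sum |\lambda_j|<\infty$ is a free consequence of $p\leq 1$ rather than an additional hypothesis.
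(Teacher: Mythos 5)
Your proposal is correct and follows essentially the same route as the paper: the paper likewise bounds the tails of the partial sums by $2\bigl(\sum_{j=n+1}^{m}|\lambda_j|^p\bigr)^{1/p}$ using the uniform dual bound on approximate atoms from Proposition~\ref{prop:pqfunct1} together with the $p$-subadditivity $\sum_j x_j \le \bigl(\sum_j x_j^p\bigr)^{1/p}$, and then invokes completeness of $\ell^{\ast}_{1/p-1,T}(X)$. The only cosmetic remark is that your parenthetical justification of $\sum_j|\lambda_j|\le\bigl(\sum_j|\lambda_j|^p\bigr)^{1/p}$ via ``$|\lambda_j|\le 1$ eventually'' only proves finiteness, not the inequality itself; the inequality you actually use in the final bound is the $p$-subadditivity you already cited, so nothing is missing.
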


\begin{proof}
	Let $\{a_j\}_{j \in \N}$ be a sequence of $(p,q,T)$-approximate atoms. For $ m,n\in \N $ with $ n<m $ and any $ \varphi \in \ell_{1/p-1,T}(X) $ such that $ \norm{\varphi}_{\ell_{1/p-1,T}}\leq 1 $, by the same argument as in the proof of Proposition \ref{prop:pqfunct1} we have
	\begin{align}
		\abs{\ip{ \sum_{j=n+1}^{m} \lambda_j a_j, \varphi } }&\leq \sum_{j=n+1}^{m}\abs{\lambda_j}\abs{ \ip{a_j,\varphi} }\leq \sum_{j=n+1}^{m} \abs{\lambda_j} 2 \norm{ \varphi }_{\ell_{1/p-1,T}}
		\leq 2 \pare{\sum_{j=n+1}^{m} \abs{\lambda_j}^{p}}^{1/p}. \label{eq:400}
	\end{align}
Then, the sequence of partial sums of $ \sum_j \lambda_j a_j $ is a Cauchy sequence in $ \ell^{\ast}_{1/p-1,T}(X)$, which is a Banach space and so, $ \sum_j \lambda_j a_j $ converges to some $ g\in \ell^{\ast}_{1/p-1,T}(X) $. Moreover, from \eqref{eq:400} we obtain the desired estimate \eqref{eq:401}.
\end{proof}

\subsection{Local atomic Hardy spaces}

We define $ h^{p,q}_\#(X)$ consisting of elements $ g \in \ell^{\ast}_{1/p-1,T}(X) $ for which there exist a sequence $ \lla{a_j}_j $ of $ (p,q,T) $-approximate atoms and a sequence $ \lla{\lambda_j}_j\in \ell^p(\C) $ such that 
\begin{equation}\label{eq:pqatom}
	g=\sum_{j=0}^{\infty} \lambda_j a_j, \quad \text{in } \ell^{\ast}_{1/p-1,T}(X),
\end{equation}
that means $\displaystyle \ip{g,\vp}=\sum_{j=0}^{\infty} \lambda_j \int_X a_j\, \vp \, d\mu$
for all $ \vp \in \ell_{1/p-1,T}(X) $. We refer to the sum in \eqref{eq:pqatom} as an \emph{atomic decomposition} in terms of $ (p,q,T) $-approximate atoms of $ g $.\\

We define 
\[
\norm{g}_{p,q}:=\inf \left\{ \pare{ \sum_j \abs{\lambda_j}^p }^{1/p} \right\},
\]
where the infimum is taken over all such atomic representations of $g$. It is clear that $ \norm{a}_{p,q} \leq 1$ for any $ (p,q,T) $-approximate atom. Note that from Remark \ref{rem:approx_atoms}\,(ii), each $(p,q,T)$-approximate atom is a multiple of $(p,q,T')$-approximate atom for any $T' \neq T$, where the constant does not depend on the atom. Moreover, from Remark \ref{rem:ell} we conclude that $h^{p,q}_\#(X)$ does not depend on the choice of $T$, and we will omit it from the notation.

We point out that $ \norm{\cdot}_{p,q} $ defines a $ p $-norm in $ h^{p,q}_\#(X) $. In effect, by \eqref{eq:401} we have
\begin{align}
	\norm{g}_{\ell^{\ast}_{1/p-1,T}}\leq 2 \norm{g}_{p,q}, \quad \forall \, g\in h^{p,q}_\#(X). \label{eq:402}
\end{align}
Moreover $ \norm{g}_{p,q}=0  \Leftrightarrow g=0$. Also, by definition it is not difficult to see that $ \norm{\lambda g}_{p,q}=\abs{\lambda} \norm{g}_{p,q} $, and $ \norm{ g+g' }_{p,q}^p\leq \norm{g}_{p,q}^p + \norm{g'}_{p,q}^p $, for any $ \lambda\in \C $, $ g,g^\prime\in  h^{p,q}_\#(X) $. As a consequence $ d_{p,q}(g,h):=\norm{g-h}_{p,q}^p $ for $g,h \in h^{p,q}_\#(X)$ defines a metric in $ h^{p,q}_\#(X) $.\\

Let $ h^{p,q}_{fin,\#}(X) $ be the subspace of $\ell^{\ast}_{1/p-1,T}(X) $ consisting of all finite linear combinations of $ (p,q,T) $-approximate atoms. The Proposition \ref{prop:414} also shows that the convergence in \eqref{eq:pqatom} is not just in distribution, but in $\ell^{\ast}_{1/p-1,T}$-norm too. Thus  $ h^{p,q}_{fin,\#}(X) $ is a dense subspace of $ (h^{p,q}_\#(X),\norm{\cdot}_{\ell^{\ast}_{1/p-1,T}})$.\\

Recall that if the functions $ a_j $ are $ (p,q) $-atoms, then the series \eqref{eq:pqatom} defines a continuous linear functional not just on $ \ell_{1/p-1,T}(X) $, but on $ \mathcal{L}_{1/p-1}(X) $. The elements in $ \mathcal{L}^{\ast}_{1/p-1}(X) $ having a decomposition in terms of such atoms define the atomic Hardy space $ H_{cw}^{p}(X)$ due to Coifman \& Weiss in \cite{CoWe77}.

\begin{remark}\label{rem:520}
	The space $ h^{p,q}_{fin,\#}(X) $ is also dense in $ (h^{p,q}_\#(X),d_{p,q}) $. In fact, let $ f $ be an element in $ h^{p,q}_\#(X) $, with decomposition $ f=\sum_j \lambda_j a_j $.	For an arbitrary $ \ve>0 $, there exists $ N(\ve)\in \N $ such that $ \sum_{j=m+1}^\infty \abs{\lambda_j}^p <\ve $, for any $ m\geq N(\ve) $. Since $\displaystyle{f-\sum_{j=1}^m \lambda_ja_j= \sum_{j=m+1}^\infty \lambda_ja_j}$ in distribution sense, and $\displaystyle{ \pare{\sum_{j=m+1}^{\infty} \abs{\lambda_j}^{p}}^{1/p} <\infty}$, we have that $\displaystyle{f-\sum_{j=1}^m \lambda_ja_j \in h^{p,q}_\#(X)}$ for all $ m\geq N $. From the definition of $ d_{p,q}(\cdot,\cdot) $ (and $ \norm{\cdot}_{p,q} $) we have
	\[
	d_{p,q}\big(f,\sum_{j=1}^m \lambda_j a_j\big)<\ve, \quad \text{ for any } m\geq N(\ve).
	\]
	This shows then the density of $ h^{p,q}_{fin,\#}(X) $ in $ (h^{p,q}_\#(X),d_{p,q})$.
\end{remark}

\begin{proposition}\label{prop:500}
	$ h^{p,q}_\#(X) $ equipped with the distance $ d_{p,q}(\cdot,\cdot) $ defines a complete metric space.
\end{proposition}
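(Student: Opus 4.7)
The plan is to use the standard argument for completeness of atomic quasi-Banach spaces: extract a rapidly Cauchy subsequence, concatenate the atomic decompositions of consecutive differences, apply Proposition \ref{prop:414} to obtain a well-defined limit, and check both that it lies in $h^{p,q}_\#(X)$ and that convergence holds in the metric $d_{p,q}$.

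First I would fix a Cauchy sequence $\{g_n\}_n$ in $(h^{p,q}_\#(X), d_{p,q})$ and pass to a subsequence $\{g_{n_k}\}_k$ with
$$\|g_{n_{k+1}}-g_{n_k}\|_{p,q}^p = d_{p,q}(g_{n_k},g_{n_{k+1}}) < 2^{-k}, \qquad k\geq 1.$$
By definition of the infimum in $\|\cdot\|_{p,q}$, for each $k\geq 1$ I can choose an atomic decomposition $g_{n_{k+1}} - g_{n_k} = \sum_j \lambda^{(k)}_j a^{(k)}_j$ in $\ell^{\ast}_{1/p-1,T}(X)$ with $\sum_j |\lambda^{(k)}_j|^p < 2^{1-k}$, and similarly write $g_{n_1} = \sum_j \lambda^{(0)}_j a^{(0)}_j$ with $\sum_j |\lambda^{(0)}_j|^p \leq \|g_{n_1}\|_{p,q}^p + 1$. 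Enumerating all the atoms and scalars collectively as $\{b_i\}$ and $\{\mu_i\}$, we get $\sum_i |\mu_i|^p = \sum_{k \geq 0}\sum_j |\lambda^{(k)}_j|^p < \infty$, so Proposition \ref{prop:414} yields an element $g \in h^{p,q}_\#(X)$ which is the $\ell^{\ast}_{1/p-1,T}$-limit of this combined series.

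Next I would identify $g$ with $\lim_k g_{n_k}$ in $\ell^{\ast}_{1/p-1,T}$-norm. The key tool is \eqref{eq:402}, which via $\|g_{n_{k+1}}-g_{n_k}\|_{\ell^{\ast}_{1/p-1,T}} \leq 2\|g_{n_{k+1}}-g_{n_k}\|_{p,q} < 2\cdot 2^{-k/p}$ makes the telescoping series $g_{n_1} + \sum_k (g_{n_{k+1}}-g_{n_k})$ absolutely convergent in the Banach space $\ell^{\ast}_{1/p-1,T}(X)$. Rearrangement is therefore permitted, and the partial sums of the combined decomposition can be grouped back into the telescoping form, giving $g = \lim_k g_{n_k}$ in $\ell^{\ast}_{1/p-1,T}$. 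Repeating the construction from index $K$ onward, $g - g_{n_K} = \sum_{k \geq K}\sum_j \lambda^{(k)}_j a^{(k)}_j$ is an atomic representation whose scalar $p$-sum is bounded by $\sum_{k \geq K} 2^{1-k} = 2^{2-K}$, so
$$d_{p,q}(g, g_{n_K}) = \|g-g_{n_K}\|_{p,q}^p \leq 2^{2-K} \xrightarrow[K\to\infty]{} 0.$$
Since $\{g_n\}_n$ is Cauchy in $d_{p,q}$ and a subsequence converges to $g$, the full sequence converges to $g$, establishing completeness.

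The delicate step I expect is the rearrangement/telescoping identification: ensuring that the limit $g$ supplied abstractly by Proposition \ref{prop:414} from the concatenated atomic series genuinely agrees with the telescoping sum of the $g_{n_k}$. The rest is bookkeeping with the $p$-subadditivity of $\|\cdot\|_{p,q}$ and standard Cauchy-sequence arguments.
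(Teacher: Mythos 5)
Your proof is correct and follows essentially the same route as the paper: both arguments concatenate near-optimal atomic decompositions, invoke Proposition \ref{prop:414} together with \eqref{eq:402} to identify the limit in $\ell^{\ast}_{1/p-1,T}(X)$, and then bound the tail of the combined decomposition to get convergence in $d_{p,q}$. The only difference is one of packaging: you carry out the rapidly-Cauchy-subsequence reduction by hand, whereas the paper delegates that step to the criterion that a $p$-normed space is complete once every $p$-absolutely summable series converges (citing Proposition A1 of \cite{Miroslav2013}).
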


\begin{proof}
	Let $ \lla{f_n}_n $ to be a sequence in $ h^{p,q}_\#(X) $ such that $ \sum_{n=1}^{\infty} \norm{f_n}_{p,q}^p $ converges. Since $ \norm{\cdot}_{p,q} $ is a $ p $-norm, from \cite[Proposition A1]{Miroslav2013} it is sufficient to show that $ \sum_{n=1}^{\infty} f_n $ converges in $ (h^{p,q}_\#,d_{p,q}(\cdot,\cdot))$.
	
	By \eqref{eq:402} we have that $\displaystyle{  \sum_{n=1}^{\infty} \norm{f_n}_{\ell^{\ast}_{1/p-1,T}}^p }$ converges 
	and since $ p<1 $ we have $\displaystyle{ \sum_{n=1}^\infty \norm{f_n}_{\ell^{\ast}_{1/p-1,T}} }$ also converges.
	By completeness of $\ell^{\ast}_{1/p-1,T} $, follows $\displaystyle{ \sum_{n=1}^{\infty} f_n }$ converges to some $ f $ in $\ell^{\ast}_{1/p-1,T}$-norm, and so
	\begin{equation}\label{eq:520}
		\qquad \qquad \qquad \qquad \ip{f,\varphi}=\lim_{m\to \infty} \sum_{n=1}^{m}\ip{f_n,\varphi},\qquad \qquad \forall \varphi \in \ell_{1/p-1,T}(X).
	\end{equation}
	
	For each $ n\in \N $, let $\displaystyle{ f_n=\sum_{i=1}^{\infty} \lambda_i^n a_i^n }$ be a decomposition of $f_n$ in $ (p,q,T) $-approximate atoms such that
	\begin{align*}
		\sum_{i=1}^\infty \abs{\lambda_i^n}^p \leq \norm{f_n}_{p,q}^p + 2^{-n}.
	\end{align*}
	By Proposition \ref{prop:414}, the sum $\displaystyle \sum_{n=1}^\infty f_n = \sum_{n=1}^\infty\sum_{i=1}^\infty \lambda_i^n a_i^n $ converges in $\ell^{\ast}_{1/p-1,T}$-norm.
	From \eqref{eq:520} we have
	\begin{align*}
		\ip{f,\varphi}= \lim_{m,k\to\infty} \sum_{n=1}^m\sum_{i=1}^k \lambda_i^n \ip{ a_i^n,\varphi},\qquad \forall \,\, \varphi\in  \ell_{1/p-1,T}(X).
	\end{align*}
	So, this means that $f \in  h^{p,q}_\#(X) $ since it can be decomposed as $\displaystyle{ f= \sum_{n}\sum_{i} \lambda_i^n a_i^n }$. Following the argument in Remark \ref{rem:520} we have $\displaystyle{ \sum_{n}\sum_{i} \lambda_i^n a_i^n }$ converges to $ f $ in the metric $ d_{p,q} $, as desired.
\end{proof}

Note that any $ (p,\infty,T) $-approximate atom is in particular a $ (p,q,T) $-approximate atom, for every $ 1\leq q< \infty $. Moreover, we have the continuous embedding $h^{p,\infty}_\#(X) \subset h^{p,q}_\#(X)$, i.e.    
	\begin{equation*}
		\norm{f}_{p,q} \leq \norm{f}_{p,\infty}, \quad \quad \forall \, f\in h^{p,\infty}_\#(X). 
	\end{equation*}
In the next theorem, we prove the converse of this inclusion assuming the atomic decomposition theorem \cite[Theorem A]{CoWe77} for homogeneous Hardy spaces $H_{cw}^{p}(X)$  that is stated under assumption that $\mu$ is a Borel regular measure.

\begin{proposition} \label{prop:502}
	Let $ (X,d,\mu) $ be a space of homogeneous in which $ \mu $ is a Borel regular measure, and $ 0<p<1 \leq q<\infty$. Then
	$ h^{p,q}_\#(X)=h^{p,\infty}_\#(X) $ with comparable norms, i.e., there exists $ C>0 $, depending only on $ p $ and $ q $ such that 
	\begin{equation}\label{eq:413}
		\norm{\cdot}_{p,q}\leq \norm{\cdot}_{p,\infty} \leq C \norm{\cdot}_{p,q}.
	\end{equation}
\end{proposition}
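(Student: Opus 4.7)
\emph{Plan.} The inequality $\norm{f}_{p,q}\leq \norm{f}_{p,\infty}$ has already been recorded in the paragraph preceding the statement, since every $(p,\infty,T)$-approximate atom is a $(p,q,T)$-approximate atom. So I only need to produce, for every $f\in h^{p,q}_\#(X)$, an atomic representation by $(p,\infty,T)$-approximate atoms whose $p$-sum is controlled by $C\norm{f}_{p,q}^p$. By the subadditivity of $\norm{\cdot}_{p,q}^p$ together with Proposition \ref{prop:414}, it suffices to show that every $(p,q,T)$-approximate atom $a$ supported on a ball $B=B(x_B,r_B)$ admits a decomposition $a=\sum_i \mu_i \beta_i$ in $\ell^{\ast}_{1/p-1,T}(X)$ with $\beta_i$ a $(p,\infty,T)$-approximate atom and $\sum_i|\mu_i|^p\leq C$, where $C$ depends only on $p$, $q$ and $(X,d,\mu)$.

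\emph{Splitting off the mean.} Set $c_a:=\int a\,d\mu$ and decompose $a=h+b$ with
\[ h:=\frac{c_a}{|B|}\chi_B,\qquad b:=a-h. \]
I claim $h$ is itself a $(p,\infty,T)$-approximate atom on $B$. The support condition is immediate, and the moment condition $|\!\int h\,d\mu|=|c_a|\leq |B(x_B,T)|^{1-1/p}$ is either hypothesis \eqref{momento} on $a$ when $r_B<T$ or vacuous when $r_B\geq T$. For the size bound I need $|c_a|\leq |B|^{1-1/p}$. When $r_B\geq T$, H\"older and the size condition on $a$ yield $|c_a|\leq \norm{a}_{L^q}|B|^{1/q'}\leq |B|^{1-1/p}$. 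When $r_B<T$, the moment condition gives $|c_a|\leq |B(x_B,T)|^{1-1/p}$; since $|B|\leq |B(x_B,T)|$ and the exponent $1-1/p$ is negative, this is at most $|B|^{1-1/p}$. Hence $\norm{h}_{L^\infty}=|c_a|/|B|\leq |B|^{-1/p}$, as required.

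\emph{The vanishing-mean piece.} By construction $\int b\,d\mu=0$ and $\supp b\subset B$, and the triangle inequality together with the bound just obtained for $|c_a|$ gives
\[ \norm{b}_{L^q}\leq \norm{a}_{L^q}+\frac{|c_a|}{|B|}|B|^{1/q}\leq 2|B|^{1/q-1/p}. \]
Thus $b/2$ is a classical $(p,q)$-atom in the sense of Coifman--Weiss with exact vanishing moment, so $\norm{b}_{H^p_{cw}}\leq 2$. Since $\mu$ is Borel regular, \cite[Theorem A]{CoWe77} decomposes $b=\sum_i \mu_i\beta_i$ with $(p,\infty)$-atoms $\beta_i$ (whose exact vanishing moment trivially implies the approximate one in \eqref{momento}, so that each $\beta_i$ is a $(p,\infty,T)$-approximate atom) and $\sum_i|\mu_i|^p\leq C\norm{b}_{H^p_{cw}}^p\leq C\,2^p$. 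Writing $a=1\cdot h + \sum_i \mu_i\beta_i$ produces an atomic decomposition of $a$ into $(p,\infty,T)$-approximate atoms with $p$-sum bounded by $1+C\,2^p$, the convergence taking place in $\ell^{\ast}_{1/p-1,T}(X)$ by Proposition \ref{prop:414}. Applying this construction term by term to any $h^{p,q}_\#$-atomic representation of $f$ and concatenating yields the reverse inequality in \eqref{eq:413}.

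\emph{Main obstacle.} The delicate point is the $L^\infty$-bound on the averaged function $h$: without the local moment estimate \eqref{momento} built into the definition of approximate atom, $|c_a|/|B|$ would in general be too large to make $h$ a $(p,\infty,T)$-approximate atom when $r_B$ is small, and the argument would collapse. It is precisely the approximate cancellation that plays the role of an $L^\infty$-size condition for the constant piece, after which the remainder $b$ has exact vanishing moment and is handled by the classical Coifman--Weiss atomic decomposition.
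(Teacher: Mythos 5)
Your proof is correct and takes essentially the same route as the paper: write $a=a_B\1_B+(a-a_B\1_B)$, check that the constant piece is a $(p,\infty,T)$-approximate atom, and apply the Coifman--Weiss decomposition \cite[Theorem A]{CoWe77} to the mean-zero remainder $b/2$, then concatenate over the atomic representation of $f$. One small correction to your closing remark: the $L^\infty$ size bound on $h$ follows from H\"older and the size condition alone in all cases ($|c_a|\le\norm{a}_{L^q}|B|^{1/q'}\le|B|^{1-1/p}$), so the approximate moment condition on $a$ is genuinely needed only to verify the \emph{moment} condition of $h$ when $r_B<T$ (where the required bound $|B(x_B,T)|^{1-1/p}$ is smaller than $|B|^{1-1/p}$), not its size condition.
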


\begin{proof}
	Let $ 1\leq q<\infty $. By the previous considerations, it remains to show that $ h^{p,q}_\# (X)\subset h^{p,\infty}_\#(X) $ with $\norm{\cdot}_{p,\infty} \leq C \norm{\cdot}_{p,q}$ .	\\
	
	We start by showing that any $ (p,q,T) $-approximate atom has a decomposition in $ (p,\infty,T) $-approximate atoms.
	Let $ a $ be a $ (p,q,T) $-approximate atom such that $\supp(a) \subset B:=B(x_B,r_B) $. Then we can write
	\begin{equation}\label{eq:411}
		\teal{a=a_B\1_B + 2 \frac{\1_B(a-a_B)}{2}}.
	\end{equation}
	It is straightforward to see that $ \frac{\1_B}{2}(a-a_B) $ is a $ (p,q) $-atom in $H_{cw}^{p}(X)$ and then from \cite[Theorem A pp. 592]{CoWe77} we have 
	\begin{equation}\label{eq:410}
		\frac{\1_B}{2}(a-a_B) = \sum_{j=1}^\infty\lambda_j a_{j}
	\end{equation}
	 in distribution $\mathcal{L}^{\ast}_{1/p-1}(X) $ (in particular in distribution $\ell^{\ast}_{1/p-1,T}(X)$), where each $ a_j $ is a $ (p,\infty) $-atom (in particular a $ (p,\infty,T) $-approximate atom), and
	\begin{equation}\label{eq:410s}
	\sum_{j=1}^\infty \abs{\lambda_j}^p\leq c	
	\end{equation}
	where $ c $ is depending on $ p $ and $ q $ but it is independent of $ a $. On the other hand, note that $ \supp(a_B\1_B)\subset B $ and $ \norm{a_B\1_B}_{L^\infty}\leq \abs{B}^{-1/p}$. Moreover, since $a$ is a $(p,q)$-function, if $ r_B<T $ we have
	\begin{align*}
		\abs{ \int a_B\1_B d\mu} = \abs{ \int_B ad\mu }\leq \abs{ B(x_B,T) }^{1-\frac1p}.
	\end{align*}
	This means $ a_B\1_B $ is a $ (p,\infty,T) $-approximate atom. Thus, from \eqref{eq:411}, \eqref{eq:410} and \eqref{eq:410s} we obtained a decomposition
	$
	a=\sum_{j} \beta_j b_j,
	$
	where each $b_j$ is a $ (p,\infty,T) $-approximate atom such that $ \pare{\sum_j\abs{\beta_j}^p}^{1/p} < c $, where $ c $ is a positive constant depending on $ p $ and $ q $ but independent of $ a $. \\
	
	Now, let $ f\in h^{p,q}_\#(X) $ and $ f=\sum_j \theta_j a_j $ any decomposition of $ f $ in $ (p,q,T) $-approximate atoms $ a_j $. From the previous construction, let $ \sum_{k} \beta_k^j b_k^j $ be the decomposition of each $a_j$ in $ (p,\infty,T) $-approximate atoms. Then
	\[
	{f=\sum_{j,k} (\theta_j \beta_{k}^j) b_{k}^j}, \qquad \text{in }\ell^{\ast}_{1/p-1,T}(X)
	\]
	is a decomposition in $ (p,\infty,T) $-approximate atoms since that
	\begin{equation}\label{eq:414}
		\pare{\sum_{j,k} \abs{\theta_j \beta_{k}^j}^p}^{1/p} \leq c\pare{\sum_{j} \abs{ \theta_j }^p}^{1/p}<\infty.
	\end{equation}
	So, $ f\in h^{p,\infty}_\#(X) $, and 
	by the arbitrariness of the decomposition $  f=\sum_j \theta_j a_j  $ we have
	\[
	\norm{f}_{p,\infty} \leq c\norm{f}_{p,q}.
	\]
\end{proof}

In view of the previous theorem, from now on we may denote the space $h^{p,q}_{\#}(X)$, for any $1\leq q \leq \infty$, simply by $h^{p}_{\#}(X)$, and its semi-norm by $\norm{\cdot}_{h^{p}_{\#}}:= \norm{\cdot}_{p,q}$.

In the same way, we denote by  $h^{p}_{cw}(X)$ the set of $f\in \ell^{\ast}_{1/p-1,T}(X)$ such that 
$\displaystyle{ f=\sum_{j=1}^\infty \lambda_j a_j}$  in   $\ell^{\ast}_{1/p-1,T}(X),$
for some  $\lla{\lambda_j}_{j} \in \ell^p(\C)$ and $\{a_j\}_{j}$  local  $(p,q)$-atoms, equipped with the norm 
$$\norm{ f }_{h^{p}_{cw}}:= \inf \pare{\sum_{j=1}^{\infty}  \abs{\lambda_j}^p}^{1/p},$$
where the infimum is taken over all such decompositions. Analogously as $h^{p}_{\#}(X)$, the space $h^{p}_{cw}(X)$  does not depend on $1\leq q \leq \infty$, 
assuming $ \mu $ is Borel regular. 
By the space $h^{p,q}_{fin}(X)$, we denote the set of $L^{q}(X)$ functions such that  $\displaystyle{f=\sum_{j=1}^n \lambda_j a_j}$ for some  $n\in \N $ (finite sum) and $\{a_j\}_{j}$ are local $(p,q)$-atoms. For this space, we consider the norm
$
\displaystyle{\norm{ f }_{h^{p,q}_{fin}}:= \inf \pare{\sum_{j=1}^{n}  \abs{\lambda_j}^p}^{1/p}},
$
where the infimum is taken over all finite decompositions of $f$. It is clear that $h^{p,q}_{fin}(X)$ is a dense subspace of $ h^{p}_{cw}(X)$ and
\begin{equation}
{\norm{ f }_{h^p_{cw}}} \leq \norm{ f }_{h^{p,q}_{fin}}, \quad \quad  \forall f \in h^{p,q}_{fin}(X).
\end{equation}
The converse in general is not true (see \cite{Bownik05} for the case $X=\R^n$ and $\mu=\mathcal{L}$). However, the norms $\norm{ \cdot }_{h^{p}_{cw}}$ and $\norm{ \cdot }_{h^{p,q}_{fin}}$ are equivalents on $ {h^{p,q}_{fin}(X)} $  for $ q\in (p,\infty)\cap [1,\infty) $ and for $q=\infty$ on $  {h^{p,\infty}_{fin}(X)}\cap UC(X) $, where $UC(X)$ denotes the space of absolutely continuous functions on $X$ (see \cite[Proposition 7.1]{HeYangWen21} and similar results for $ H^p(\R^n)$ see \cite[Theorem 3.1]{Meda08} and \cite[Theorem 5.6]{GraLiuYang08}).

\subsection{The local Campanato spaces as the dual of $ h^{p}_\#(X) $.}

Let $ \alpha \geq 0 $ and $ q\in [1,\infty] $. We call by \textit{Campanato space}, denoted by $\mathcal{C}_{\alpha,q}(X)$, the set of all $\mu$-measurable functions $f$ such that
$$
\| f \|_{\mathcal{C}_{\alpha,q}} := \sup_{B \subset X} \frac{1}{\abs{B}^{\alpha+\frac1q}} \norm{f-f_B}_{L^q(B)}.
$$ 
Clearly, if $\alpha=0$ and $q=1$, then $\mathcal{C}_{0,1}(X)=BMO(X)$. This space is also denoted in \cite{MaSe79} by $Lip(\alpha,q)$. \\

Here we are interested in the non-homogeneous version of such spaces. Given $T>0$ fixed, for any ball $ B=B(x_B,r_B) $ and $f\in L^{q}_{loc}(X) $, we define the functional
\begin{equation}\label{eq:7070}
	m_{B,T}(f):=\begin{cases}
		f_B, &r_B<T\\
		0,& r_B\geq T,\\
	\end{cases}
\end{equation}
and
\begin{equation}\label{eq:7071}
	\mathfrak{M}_{\alpha,q,T}^B(f):=\frac{1}{\abs{B}^{\alpha+\frac1q}} \norm{f-m_{B,T}(f)}_{L^q(B)}.
\end{equation}
We define the \emph{local Campanato space} as
\[
c_{\alpha,q, T}(X):= \lla{ f\in L^q_{{loc}}(X) : \norm{f}_{c_{\alpha,q,T}}:= \sup_{B\subset X} \mathfrak{M}_{\alpha,q,T}^B(f)<\infty }.
\]
The functional $\norm{\cdot}_{c_{\alpha,q,T}}$ defines a norm in $c_{\alpha,q, T}(X)$. It is not difficult to see that $ c_{\alpha,q,T}(X) \subset \mathcal{C}_{\alpha,q}(X) $ continuously  with $ \norm{f}_{\mathcal{C}_{\alpha,q}} \leq 2 \norm{f}_{c_{\alpha,q,T}} $.

\begin{remark}\label{eq:comparison-lips-camp} 
	The spaces $ \ell_{\alpha,T}(X)$ and $c_{\alpha,q,T}(X)$ can be identified and have comparable norms. Clearly $ \ell_{\alpha,T}(X) \subset c_{\alpha,q,T}(X) $ continuously. Conversely, let $ q\in [1,\infty] $ and $f \in L^{q}_{loc}(X)$ belonging to $c_{\alpha,q,T}(X)$. We claim that there exists a function $\tilde{f} \in L^{\infty}_{loc}(X)$ such that $\tilde{f}=f$ almost everywhere and $\tilde{f}\in \ell_{\alpha,T}(X) $ with  $ \norm{\tilde{f}}_{\ell_{\alpha,T}}\lesssim \norm{f}_{c_{\alpha,q,T}}$. Indeed, since $c_{\alpha,q,T}(X) $  is continuously embedded in $\mathcal{C}_{\alpha,q}(X) $, it follows by \cite[Theorem 4]{MaSe79} that there exists $ \tilde{f} $ equal to $ f $ a.e. and $C=C(\alpha,q)>0$ such that for any ball $ B \subset X $,
\begin{equation}\nonumber 
	\abs{\tilde{f}(x)-\tilde{f}(y)}\leq C\norm{f}_{\mathcal{C}_{\alpha,q}}\abs{B}^\alpha,\qquad \qquad \forall\, x,y\in B.
\end{equation}
Now if $ B $ is a ball with 	$ r_B\geq T $ then for each $ x\in B $ we have 
\begin{align*}
	\abs{\tilde{f}(x)}\leq \abs{\tilde{f}(x)-\tilde{f}_B}+\abs{\tilde{f}_B}&\leq\frac1{\abs{B}}\int_{B} \abs{ \tilde{f}(x)-\tilde{f}(y) }d\mu(y) + \frac{1}{\abs{B}}\int_B \abs{f(y)}d\mu(y) \nn\\
	&\leq C\norm{f}_{\mathcal{C}_{\alpha,q}}\abs{B}^\alpha+\frac{1}{\abs{B}^\frac1q}\norm{f}_{L^q(B)} \nn\\
	&\leq (2C+1)\norm{f}_{c_{\alpha,q,T}}\abs{B}^\alpha.\label{eq:comp2}
\end{align*}
From the previous estimates we obtain $ \tilde{f}\in \ell_{\alpha,T}(X) $
with $ \norm{\tilde{f}}_{\ell_{\alpha,T}}\lesssim \norm{f}_{c_{\alpha,q,T}}$.\\
\end{remark}

When $\alpha=0$ and $1\leq q <\infty$ we have $c_{0,q, T}(X)=bmo(X)$, where $bmo(X)$ denotes the local $BMO$ space over $X$ (see \cite[Corollary 3.3]{DafniMomYue16}). From Remarks \ref{rem:ell} and \ref{eq:comparison-lips-camp} it follows that $ c_{\alpha,q, T}(X)=c_{\alpha,q, T'}(X)$ with equivalent norms for $ T \neq T' $. \\

In what follows we present an alternative characterization of $c_{\alpha,q,T}(X)$, inspired by the analogous result for $bmo(X)$, proved in \cite[Lemma 6.1]{DafniMomYue16}. This result will be useful in Proposition \ref{prop:duality} to show a duality relation between local Campanato and Hardy spaces.

\begin{proposition} \label{prop:406}
	Let $ \beta>0 $, $ 1\leq q\leq \infty $, and $ f\in L^{q}_{loc}(X) $. Then $ f\in c_{\beta,q,T}(X) $ if and only if for every ball $ B=B(x_B,r_B) $ in $ X $ there exists a constant $ C_B $ such that
	\begin{enumerate}[(i)]
		\item $ \displaystyle M_1 := 
		\sup_B \frac{1}{|B|^{\beta+\frac1q}}  \norm{ f-C_B }_{L^q(B)}  < \infty$;
		\item $\displaystyle M_2 := \sup_{\substack{B\subset X}} \frac{|C_B|}{|B(x_B,T)|^{\beta} 
		} <\infty$;
	\end{enumerate}
	and
	\[
	\norm{f}_{c_{\beta,q,T}}\approx \inf \max\lla{M_1,M_2	}
	\]
	where the infimum is taken over all choices of the $ \lla{C_B} $ such that $ (i) $ and $ (ii) $ hold.
\end{proposition}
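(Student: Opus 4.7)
The plan is to prove the equivalence by two separate inequalities. For the easy direction, I assume $\{C_B\}$ satisfies (i) and (ii) and show that $\|f\|_{c_{\beta,q,T}}\lesssim \max\{M_1,M_2\}$ by splitting into the two cases used to define $\mathfrak{M}^B_{\beta,q,T}$. When $r_B<T$, the elementary Hölder bound $|f_B-C_B|\leq |B|^{-1/q}\|f-C_B\|_{L^q(B)}$ combined with the triangle inequality gives $\|f-f_B\|_{L^q(B)}\leq 2\|f-C_B\|_{L^q(B)} \leq 2M_1|B|^{\beta+1/q}$. When $r_B\geq T$, the inclusion $B(x_B,T)\subset B$ yields $|B(x_B,T)|\leq |B|$, so (ii) forces $|C_B|\leq M_2|B|^\beta$, and hence $\|f\|_{L^q(B)}\leq \|f-C_B\|_{L^q(B)}+|C_B|\,|B|^{1/q}\leq (M_1+M_2)|B|^{\beta+1/q}$.

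For the harder direction, I need to produce a family $\{C_B\}$ achieving $\max\{M_1,M_2\}\lesssim \|f\|_{c_{\beta,q,T}}$. The naive choice $C_B=f_B$ immediately gives (i), but to verify (ii) one would be forced to control $|f_B|$ by $|B(x_B,T)|^\beta$; the standard chain-of-balls argument between $B$ and $B(x_B,T)$ appears to produce a logarithmic factor in $T/r_B$ in a general space of homogeneous type, where no reverse doubling is assumed. To circumvent this, I would invoke Remark~\ref{eq:comparison-lips-camp} to pass from $f$ to its continuous Lipschitz representative $\tilde{f}\in \ell_{\beta,T}(X)$ with $\|\tilde{f}\|_{\ell_{\beta,T}}\lesssim \|f\|_{c_{\beta,q,T}}$, and then set $C_B:=\tilde{f}(x_B)$. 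Condition (ii) is immediate from the $r\geq T$ branch of the definition of $\ell_{\beta,T}$ applied to the ball $B(x_B,T)$, giving
\begin{equation*}
|\tilde{f}(x_B)|\leq \|\tilde{f}\|_{L^\infty(B(x_B,T))}\leq |B(x_B,T)|^\beta\,\|\tilde{f}\|_{\ell_{\beta,T}}.
\end{equation*}
For (i), when $r_B<T$ the local Hölder estimate delivers $|\tilde{f}(x)-\tilde{f}(x_B)|\leq |B|^\beta\|\tilde{f}\|_{\ell_{\beta,T}}$ pointwise on $B$; when $r_B\geq T$ I combine $\|\tilde{f}\|_{L^\infty(B)}\leq |B|^\beta\|\tilde{f}\|_{\ell_{\beta,T}}$ with the preceding bound on $|\tilde{f}(x_B)|$ and the inequality $|B(x_B,T)|\leq |B|$.

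The main obstacle is the forward direction, specifically the requirement that $|C_B|$ be controlled by $|B(x_B,T)|^\beta$ even when $r_B$ is much smaller than $T$. The Lipschitz identification of Remark~\ref{eq:comparison-lips-camp} is what resolves this cleanly, since it furnishes a pointwise value at the center of each ball with precisely the growth $|B(x_B,T)|^\beta$; without it, a more delicate telescoping over concentric balls would be needed and the absence of reverse doubling would make a sharp estimate problematic.
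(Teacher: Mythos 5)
Your proof is correct, and your converse direction (deducing $\norm{f}_{c_{\beta,q,T}}\le 2\max\{M_1,M_2\}$ from (i) and (ii) by splitting on $r_B<T$ versus $r_B\ge T$) is essentially identical to the paper's. The forward direction, however, takes a genuinely different route. The paper keeps the natural choice $C_B=m_{B,T}(f)$ and proves (ii) directly by telescoping over a chain of balls $B=B_0\subset B_1\subset\cdots\subset B_{m+1}=B(x_B,T)$ chosen, via left-continuity of $s\mapsto|B(x_B,s)|$, so that the \emph{measures} grow geometrically: $(A')^{k-1}|B|<|B_k|\le (A')^k|B|$. Consequently $\sum_k|B_k|^\beta$ is a geometric series dominated by its last term whenever $\beta>0$, so no reverse doubling is needed and, contrary to the worry you raise, no logarithmic factor in $T/r_B$ appears; the logarithm is a phenomenon of the endpoint $\beta=0$ (the $bmo(X)$ case), not of general spaces of homogeneous type. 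Your alternative is to import the Campanato--Lipschitz identification of Remark~\ref{eq:comparison-lips-camp} (which rests on \cite[Theorem 4]{MaSe79}) and take $C_B=\tilde f(x_B)$; this is legitimate, since that remark is established before and independently of Proposition~\ref{prop:406}, and your verifications of (i) and (ii) are correct --- for (ii) one applies the $r\ge T$ branch of the $\ell_{\beta,T}$-functional to the ball $B(x_B,T)$, and for $r_B\ge T$ in (i) the bound $|B(x_B,T)|\le|B|$ closes the estimate. What your route gains is brevity; what it gives up is self-containedness (the hard work is hidden inside the Mac\'ias--Segovia embedding) and the extra information, explicit in the paper's argument, that the canonical constants $m_{B,T}(f)$ themselves realize the infimum up to a fixed multiplicative factor.
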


\begin{proof}
	For each $ f\in c_{\beta,q,T}(X) $ let  $ C_B=m_{B,T}(f) $. Clearly, $M_{1} = \|f\|_{c_{\beta,q,T}}$ and it will be sufficient to show (ii) for $ B=B(x_B,r_B) $ with $ r_B<T $. Suppose first that $ \abs{B}=\abs{B(x_B,T)} $. Then
	\begin{align*}
		\abs{C_B}&=\frac{1}{\abs{B}}\abs{\int_B f(x)d\mu(x)} \leq \frac{1}{\abs{B(x_B,T)}} \int_{B(x_B,T)} \abs{f(x)}d\mu(x) \\
		& \leq \abs{B(x_B,T)}^{-\frac1q} \norm{f}_{L^q(B(x_B,T))} \\
		& \leq \abs{B(x_B,T)}^\beta \norm{f}_{c_{\beta,q,T}}.
	\end{align*}
	
	Suppose now that $ \abs{B}< \abs{B(x_B,T)} $. Following the same ideas as \cite[Lemma 3]{MaSe79}, let $ m $ be a non-negative integer such that
	\begin{equation}\label{eq:0032}
		(A^\prime)^m\abs{B}< \abs{B(x_B,T)}\leq (A^\prime)^{m+1}\abs{B}.
	\end{equation}
	We claim that there exist positive constants $ r_0:=r_{B}<r_1<r_2<\cdots < r_m<{r_{m+1}:=T} $ such that
	\begin{equation}\label{eq:0033}
		(A^\prime)^{k-1}\abs{B}< \abs{B(x_B,r_k)}\leq (A^\prime)^k\abs{B}.
	\end{equation}
	In fact, note first that \eqref{eq:0033} holds for $ k=0 $ and $ k=m+1 $ and for $ k\in \lla{1,2,,\cdots,m} $ we define
	\[
	r_k:=\max \lla{ s:\abs{B(x_B,s)}\leq (A^\prime)^k\abs{B} }.
	\]
	The existence of maximum $ r_k $ is given by the continuity from the left of the function $ s\mapsto \abs{B(x_B,s)} $. Then
	\begin{equation}\label{eq:000321}
		\abs{B(x_B,r_k)}\leq (A^\prime)^k\abs{B}, \qquad \qquad \forall\, 0\leq k \leq m+1 .
	\end{equation}
	On the other hand, from the doubling condition we obtain
	\[
	\abs{B(x_B,2r_k)}\leq 
	(A^{\prime})^{k+1} \abs{B}, \qquad \qquad \forall\, 0\leq k\leq m-1,
	\]
	Then, $ r_k<2r_k\leq r_{k+1} $ for $ 0\leq k\leq m-1 $ and from the left inequality in \eqref{eq:0032} we obtain $ r_m<T $. This, along with definition of $ r_k $'s, implies
	\begin{equation}\label{eq:000322}
		(A^\prime)^{k-1}\abs{B}< \abs{B(x_B,r_k)}, \qquad\qquad \forall\, 0\leq k\leq m+1.
	\end{equation} 
	Therefore, from \eqref{eq:000321} and \eqref{eq:000322} we get \eqref{eq:0033}.\\
	
	Coming back to the proof of (ii), denote by $ B_k:=B(x_B,r_k) $. From \eqref{eq:0033} we have
	\begin{align*}
		\sum_{k=1}^{m+1} \abs{ B_k }^\beta &\leq (A')^{\beta} \abs{B}^\beta \sum_{k=0}^{m} (A^\prime)^{\beta k} = (A')^{\beta} \abs{B}^\beta \frac{(A^{\prime})^{\beta(m+1)}-1}{(A^{\prime})^\beta-1} \leq \frac{(A')^{2\beta}}{(A^{\prime})^\beta-1} [(A')^m\abs{B}]^\beta.
	\end{align*}
	From the definition of $c_{\beta,q,T}(X)$, the previous inequality and \eqref{eq:0033} we get
		\begin{align*}
		\abs{C_B}&\leq \sum_{k=1}^m \abs{ m_{B_{k-1},T}(f) - m_{B_{k},T}(f)}+\abs{m_{B_m,T}(f)} \\
		& \leq \sum_{k=1}^m \frac{1}{\abs{B_{k-1}}}\int_{B_{k-1}} \abs{ f(x) - m_{B_k,T}(f)}d\mu(x)
		+ \frac{1}{\abs{B_m}} \int_{B(x_B,T)} \abs{f(x)}d\mu(x)\\
		& \leq \sum_{k=1}^m \frac{\abs{B_k}}{\abs{B_{k-1}}}\frac{1}{\abs{B_k}}\int_{B_{k}} \abs{ f(x) - m_{B_k,T}(f)}d\mu(x)
		+\frac{\abs{B_{m+1}}}{\abs{B_m}} \abs{B_{m+1}}^\beta\norm{f}_{c_{\beta,q,T}}\\
		& \leq (A^\prime)^2\sum_{k=1}^m \abs{B_k}^{-\frac1q}\norm{ f - m_{B_k,T}(f)}_{L^q(B_k)}+ (A^\prime)^2 \abs{B_{m+1}}^\beta \norm{f}_{c_{\beta,q,T}}\\
		&\leq  (A^\prime)^2 \norm{f}_{c_{\beta,q,T}}\sum_{k=1}^{m+1} \abs{ B_k }^\beta \\
		& \leq \frac{(A^\prime)^{2(\beta+1)}}{(A^{\prime})^\beta-1} \norm{f}_{c_{\beta,q,T}} [(A^\prime)^m \abs{B}]^\beta \\
		&\leq \frac{(A^\prime)^{2(\beta+1)}}{(A^{\prime})^\beta-1} \norm{f}_{c_{\beta,q,T}} \abs{B(x_B,T)}^\beta,
	\end{align*}
	which concludes the proof of (ii).

Conversely, suppose that for each ball $B$ there exists a constant $C_{B}$ such that (i) and (i) hold. Assume first that $ B $ is such that $ r_B<T $. Then, 
	\begin{align*}
		\frac{1}{|B|^{\beta+\frac1q}} \norm{ f-m_{B,T}(f)}_{L^q(B)} &\leq \frac{1}{|B|^{\beta+\frac1q}} \norm{ f-C_B}_{L^q(B)}+ \frac{1}{|B|^{\beta+\frac1q}} \norm{ m_{B,T}(f)-C_B}_{L^q(B)}\\
		& \leq M_1 + \frac{1}{|B|^{\beta+\frac1q}} \norm{ f-C_B}_{L^q(B)}\leq 2M_1. 
	\end{align*}
	Now, suppose $ r_B\geq T $. In this case $ m_{B,T}(f)=0 $ and
	\begin{align*}
		\frac{1}{|B|^{\beta+\frac1q}} \norm{ f-m_{B,T}(f)}_{L^q(B)} &= \frac{1}{|B|^{\beta+\frac1q}} \norm{ f}_{L^q(B)}  \\
		&\leq  \frac{1}{|B|^{\beta+\frac1q}} \norm{ f-C_B}_{L^q(B)}  + \frac{1}{|B|^{\beta+\frac1q}} \norm{C_B}_{L^q(B)}  \\
		& \leq M_1+\frac{\abs{C_B}}{ \abs{B}^{\beta} }  \leq M_1 + \frac{\abs{C_B}}{ \abs{B(x_B,T)}^{\beta} }  \\
		&\leq M_1+M_2.
	\end{align*}
	Then
	\[
	\norm{f}_{c_{\beta,q,T}}\leq 2\max\lla{M_1,M_2}
	\]
	and by the arbitrariness of the family $ \lla{C_B}_B $ we have
	\[
	\norm{f}_{c_{\beta,q,T}}\leq 2\inf\max\lla{M_1,M_2},
	\]
	where the infimum is taken over all choices of $ \lla{C_B}_{B} $.
\end{proof}

\begin{remark}\label{rem:408}
	Note that elements in $ c_{\beta,q,T}(X) $ define naturally bounded linear operators on finite linear combinations of $ (p,q,T) $-approximate atoms. In fact, let $ 0<p<1\leq q\leq \infty $ and $ a $ be a $ (p,q,T) $-approximate atom supported in $ B=B(x_B,r_B) $. For any $ f\in c_{1/p-1,q',T}(X) $, from condition (ii) in Proposition \ref{prop:406} we may control
	\begin{align}
		\abs{\int afd\mu}&\leq \int_{B} \abs{a(x)}\abs{ f(x)-m_{B,T}(f) }d\mu(x)+ \abs{m_{B,T}(f)} \abs{ \int_B a(x)d\mu(x) }\nn \\
		& \leq  \norm{a}_{L^q} \norm{f-m_{B,T}(f)}_{L^{q'}(B)} + \abs{m_{B,T}(f)} \abs{ \int_{B} a(x)d\mu(x) }\nn \\
		& \leq |B|^{1-\frac{1}{q'}-\frac1p} \norm{ f -m_{B,T}(f) }_{L^{q'}(B)} + \frac{\abs{m_{B,T}(f)}}{\abs{B(x_B,T)}^{1/p-1} }\nn \\
		& \leq \pare{1+\frac{(A^\prime)^{\frac2p}}{(A^{\prime})^{\frac1p-1}-1} }\norm{f}_{c_{1/p-1,q',T}} . \label{eq:523}
	\end{align}
	If $ r_B\geq T $, then $ m_{B,T}(f)=0 $, and so
	\begin{align}\label{eq:524}
		\abs{\int af d\mu }\leq \norm{a}_{L^q} \norm{f-m_{B,T}(f)}_{L^{q'}(B)}\leq \norm{f}_{c_{1/p-1,q',T}}.
	\end{align}
\end{remark}

	\begin{proposition}\label{prop:duality}
		Let $ (X,d,\mu) $ be a space of homogeneous type and $0<p<1$. Then: 
		\begin{itemize}
			\item[(i)] $(h^{p,q}_\#)^* = c_{1/p-1,q',T}(X) $ with equivalent norms for $ 1\leq q<\infty $ and $ c_{1/p-1,1,T}(X) \subset (h^{p,\infty}_\#)^*  $ continuously.
			\item[(ii)] If in addition $ \mu $ is a Borel regular measure, $ (h^{p,\infty}_\#)^* \subset c_{1/p-1,1,T}(X) $ continuously.
		\end{itemize}
	\end{proposition}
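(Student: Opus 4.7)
The plan is to establish the two inclusions in (i) separately and then deduce (ii) by combining (i) with the equivalence of atomic local Hardy spaces for different exponents.

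For the forward inclusion $c_{1/p-1,q',T}(X) \hookrightarrow (h^{p,q}_\#(X))^*$, which works uniformly for all $q \in [1,\infty]$, I would proceed as follows. Given $f \in c_{1/p-1,q',T}(X)$, by Remark \ref{eq:comparison-lips-camp} we can replace $f$ by an a.e.\ equal representative lying in $\ell_{1/p-1,T}(X)$, so the duality pairing $\Phi_f(g):=\langle g,f\rangle$ is defined on every $g\in h^{p,q}_\#(X)\subset \ell^*_{1/p-1,T}(X)$. For an atomic decomposition $g=\sum_j \lambda_j a_j$, Remark \ref{rem:408} bounds $|\int a_j f\,d\mu|$ by a constant multiple of $\norm{f}_{c_{1/p-1,q',T}}$. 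Since $0<p<1$ makes $t\mapsto t^p$ subadditive, $\sum_j|\lambda_j|\leq (\sum_j|\lambda_j|^p)^{1/p}$, so the series converges absolutely and, on taking infimum over decompositions, yields $|\Phi_f(g)|\leq C\norm{f}_{c_{1/p-1,q',T}}\norm{g}_{h^{p,q}_\#}$.

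For the reverse inclusion $(h^{p,q}_\#)^*\subset c_{1/p-1,q',T}(X)$ when $1\leq q<\infty$, take $\Phi\in (h^{p,q}_\#)^*$. For any ball $B\subset X$, every $h\in L^q$ supported in $B$ is a multiple of a $(p,q,T)$-approximate atom, so $\Phi$ restricts to a bounded linear functional on the subspace of $L^q$-functions supported in $B$. Riesz representation produces $\tilde f_B\in L^{q'}(B)$ with $\Phi(h)=\int \tilde f_B h\,d\mu$, and uniqueness forces agreement on overlaps, so these patch together into a single $f\in L^{q'}_{loc}(X)$ without ambiguity. To verify $f\in c_{1/p-1,q',T}(X)$ via the seminorm $\mathfrak{M}^B_{1/p-1,q',T}$, I split into two cases. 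If $r_B<T$, restricting to mean-zero $h\in L^q_0(B)$ gives the atom bound $|\Phi(h)|\leq \|\Phi\|\,\|h\|_{L^q}|B|^{1/p-1/q}$; duality with $L^{q'}(B)/\C$ then yields $\inf_{c\in\C}\|f-c\|_{L^{q'}(B)}\leq \|\Phi\|\,|B|^{1/p-1+1/q'}$, and the standard Hölder estimate $\|f-f_B\|_{L^{q'}(B)}\leq 2\inf_c\|f-c\|_{L^{q'}(B)}$ controls the seminorm with $C_B=m_{B,T}(f)=f_B$. If $r_B\geq T$, atoms in $B$ need no moment condition, so $\Phi$ is directly controlled by the $L^q$-norm, yielding $\|f\|_{L^{q'}(B)}\leq \|\Phi\|\,|B|^{1/p-1+1/q'}$, which matches $m_{B,T}(f)=0$. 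Combining both cases gives $\norm{f}_{c_{1/p-1,q',T}}\leq C\|\Phi\|$.

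For (ii), the Borel regularity of $\mu$ lets us invoke Proposition \ref{prop:502}, giving $h^{p,\infty}_\#(X)=h^{p,q}_\#(X)$ with equivalent norms for any $1\leq q<\infty$. Taking $q=2$ (say), part (i) yields $(h^{p,\infty}_\#)^*=(h^{p,2}_\#)^*=c_{1/p-1,2,T}(X)$, and Remark \ref{eq:comparison-lips-camp} identifies all $c_{1/p-1,q',T}(X)$ for $q'\in[1,\infty]$ as the same space (with comparable norms, since they all coincide with $\ell_{1/p-1,T}(X)$). Hence $(h^{p,\infty}_\#)^*\subset c_{1/p-1,1,T}(X)$ continuously. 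The main obstacle is the gluing step in paragraph two: one must choose the local representatives $\tilde f_B$ canonically — this is why I work with the full $L^q(B)$ rather than the mean-zero subspace first, since then the Riesz representative is unique rather than merely defined modulo constants, and the patching across nested balls is forced by uniqueness.
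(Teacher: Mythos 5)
Your proof of part (i) is correct and follows essentially the same route as the paper: the embedding $c_{1/p-1,q',T}(X)\hookrightarrow (h^{p,q}_\#)^{\ast}$ via the atom pairing of Remark \ref{rem:408} together with $\sum_j|\lambda_j|\le(\sum_j|\lambda_j|^p)^{1/p}$, and the converse via Riesz representation on balls followed by testing against mean-zero normalized functions on small balls and arbitrary normalized functions on large balls. One phrase deserves care: for a ball with $r_B<T$, a general $h\in L^q(B)$ is a multiple of a $(p,q,T)$-approximate atom only with constant of order $\norm{h}_{L^q}\abs{B}^{1-\frac1q}\abs{B(x_B,T)}^{\frac1p-1}$ rather than $\norm{h}_{L^q}\abs{B}^{\frac1p-\frac1q}$, since the moment condition is not automatic on small balls; this is harmless where you invoke it, because a $B$-dependent bound suffices for the Riesz step and your quantitative estimates correctly pass to the mean-zero subspace before claiming uniformity. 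For part (ii) you take a genuinely shorter route than the paper: rather than rerunning the Riesz argument with local $(p,2)$-atoms, you identify $(h^{p,\infty}_\#)^{\ast}$ with $(h^{p,2}_\#)^{\ast}=c_{1/p-1,2,T}(X)$ via Proposition \ref{prop:502} and then embed into $c_{1/p-1,1,T}(X)$; for that last embedding H\"older's inequality alone gives $c_{\alpha,2,T}(X)\subset c_{\alpha,1,T}(X)$ continuously, so you do not even need the full strength of Remark \ref{eq:comparison-lips-camp}. Both derivations of (ii) rest on the same ingredient, namely the $q$-independence of $h^{p,q}_\#(X)$ under Borel regularity, so the gain is economy of exposition rather than generality.
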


\begin{proof}
	Let $ f\in c_{1/p-1,q',T}(X) $. We start defining an operator $ \Lambda_f $ on $ h^{p,q}_{fin}(X) $ by
	\begin{equation}\label{eq:8000*}
		\Lambda_f(g) :=  \int fg\, d\mu =\sum_{j=1}^{n} \lambda_j \int a_jf\ d\mu, 
	\end{equation}
	for $ g \in h^{p,q}_{fin}(X) $ given by $ g=\sum_{j=1}^{n} \lambda_j a_j $. From Remark \ref{rem:408},  we obtain
	\begin{equation}\label{eq:7040*}
		\qquad \qquad \qquad \quad \qquad \abs{\Lambda_f(g)}\lesssim \norm{f}_{c_{1/p-1,q',T}} \pare{\sum_{j=1}^{n} \abs{\lambda_j}^p}^{1/p}, \qquad \qquad  \forall \ g\in h^{p,q}_{fin}(X).
	\end{equation}
	Now we extend this functional to infinity sums. Consider $ G \in h^{p,q}_\#(X) $ having a decomposition $ G=\sum_{j=1}^{\infty} \lambda_j a_j $ in $ \ell^{\ast}_{1/p-1,T}(X) $. Then from \eqref{eq:7040*} we obtain that $ \lla{\Lambda_f \pare{\sum_{j=1}^{m} \lambda_j a_j} }_m $ is a Cauchy sequence in $\C$. By completeness, we may define
	\begin{equation}\label{eq:8010*}
		\tilde{\Lambda}_f ( G ):=\lim_{m\to \infty} \Lambda_f\pare{\sum_{j=1}^{m} \lambda_j a_j}= \lim_{m\to \infty} \sum_{j=1}^{m} \lambda_j \int a_j f \ d\mu.
	\end{equation}
	From Remark \ref{eq:comparison-lips-camp} there exists $ \tilde{f}\in \ell_{1/p-1,T}(X) $ with $ f(x)=\tilde{f}(x) $ a.e. $ x\in X $. Then if $ \sum_{j=1}^m \beta_j b_j $ is another decomposition for $ G $ we obtain
	\begin{align*}
		\lim_{m\to \infty} \sum_{j=1}^{m} \lambda_j \int a_j f \ d\mu= \lim_{m\to \infty} \sum_{j=1}^{m} \lambda_j \int a_j \tilde{f} \ d\mu = \ip{G,\tilde{f}}&= \lim_{m\to \infty} \sum_{j=1}^{m} \beta_j \int b_j \tilde{f} \ d\mu\\
		&= \lim_{m\to \infty} \sum_{j=1}^{m} \beta_j \int b_j f \ d\mu.
	\end{align*}
	This is, the definition of $ \tilde{\Lambda}_f(G)$ is independent of the choice of the decomposition of $ G $.	So, $ \tilde{\Lambda}_f: h^{p,q}_\#(X) \to \C$ is a well defined operator and $ \tilde{\Lambda}_f|_{h^{p,q}_{fin}(X)} = \Lambda_f $.
	Moreover, for $ G\in h^{p,q}_\#(X) $, taking a decomposition $\displaystyle{G=\sum_{j=1}^{\infty} \lambda_j a_j }$ such that $ \pare{\sum_{j=1}^\infty \abs{\lambda_j}^p}^\frac1p \leq 2\norm{G}_{h^{p,q}_\#(X)} $, by \eqref{eq:7040*} we obtain
	\begin{align}
		\abs{\tilde{\Lambda}_f(G)}
		&\lesssim \bigg| \tilde{\Lambda}_f(G)-\Lambda_f\big(\sum_{j=1}^m \lambda_j a_j\big) \bigg| + 2 \norm{ f }_{c_{1/p-1,q',T}}\norm{ G }_{h^{p,q}_\#(X)}, \label{eq:70700*}
	\end{align}
	for any $ m\in \N $. This means that
	\begin{equation*}
		\abs{\tilde{\Lambda}_f(G)} \lesssim 2 \norm{ f }_{c_{1/p-1,q',T}}\norm{ G }_{h^{p,q}_\#(X)}
	\end{equation*}
	and therefore shows that $ c_{1/p-1,q^\prime,T}(X) \subset (h^{p,q}_\#(X))^* $ for $ q\in [1,\infty] $.\\ \\
	
	Conversely, let $ 1\leq q<\infty $ and $ \Lambda \in (h^{p,q}_\#)^{\ast} $. 
	For a ball $ B \subset X $ with radius $ r_B\geq T $ and a function $ g\in L^{q}(B) $ with $ \norm{g}_{L^{q}(B)}>0 $, the function $ \tilde{g}:=(\abs{B}^{\frac1q-\frac1p}\norm{g}_{L^q(B)}^{-1})\, g \1_B  $ is a $ (p,q,T) $-approximate atom (in particular, a  local $ (p,q) $-atom), and so
	\begin{align*}
		\abs{ \ip{\Lambda,\1_B g} } 
		& \leq \abs{B}^{\frac1p-\frac1q} \norm{\Lambda}_{(h^{p,q}_\#(X))^*} \norm{ g }_{L^q(B)}.
	\end{align*}
	This means that $ \ip{\Lambda,\1_B (\cdot)} $ defines a bounded linear operator on $ L^q(B) $ and hence from Riesz representation theorem there exists a unique $ f^{(B)} \in L^{q^\prime}(B)$ such that
	\begin{align}
		\qquad \qquad \qquad \qquad \qquad	\ip{ \Lambda,\1_B g }= \int_B f^{(B)} g \, d\mu , \qquad \qquad \qquad \quad  \forall \, g\in L^q(B),	\label{eq:10100*}
	\end{align}
	and 
	\begin{align}
		\norm{f^{(B)}}_{L^{q^\prime}(B)} \leq \norm{\Lambda}_{(h^{p,q}_\#(X))^* } \abs{B}^{\frac1p-\frac1q}. \label{eq:10200*} 
	\end{align}
	Moreover,  if $ B_1\subset B_2 $ with $ r_{B_1} \geq T $ and $ g\in L^q(B_1) $, then 
	$ \1_{B_1} g \in L^q(B_2)$	and from \eqref{eq:10100*}
	\begin{align*}
		\int_{B_1} f^{(B_2)} g d\mu= \int_{B_2} f^{(B_2)} \1_{B_1}g d\mu = \ip{ \Lambda,\1_{B_2} \, \1_{B_1} g } = \ip{ \Lambda, \1_{B_1} g } =\int_{B_1} f^{(B_1)} g d\mu .
	\end{align*}
	By the uniqueness of $ f^{(B_1)} $ we have $ f^{(B_1)}=f^{(B_2)}\chi_{B_1} $. Consider $L^{q}_{c}(X)$ the set of $ g\in L^q_{loc}(X) $ such that $ g\in L^q(X) $ with bounded support. Thus from \eqref{eq:10100*} we have
	\begin{equation}\label{eq:10110*}
		\int fg d\mu = \ip{\Lambda, g}, \quad \forall\, g \in L^{q}_{c}(X).
	\end{equation} 
	Also, from \eqref{eq:10200*} we have
	\begin{equation}
		\norm{f}_{L^{q^\prime}(B)} \leq \norm{\Lambda}_{(h^{p,q}_\#(X))^* } \abs{B}^{\frac1p-1+\frac1{q^\prime}}, \label{eq:10210*}
	\end{equation}
	for any ball $ B $ with $ r_B\geq T $. Note that, in
	particular 
	\eqref{eq:10110*} it is true for 
	$ g \in  h^{p,q}_{fin}(X) $.
	
	Now, suppose that $ B $ is a ball with $ r_B<T $. Let $ \vp \in L^q(B) $ such that $ \norm{\vp}_{L^q(B)} =1$. Then, the function
	$
	\displaystyle{\tilde{\varphi} = \frac{[\vp-m_{B,T}(\vp)]\1_B }{ 2\abs{B}^{\frac1p-\frac1q}}}
	$ 
	is a $ (p,q) $-atom (in particular a $ (p,q,T) $-approximate atom), so $ \norm{\tilde{\vp}}_{p,q}\leq 1 $. From \eqref{eq:10110*}
	\begin{align*}
		\abs{ \int_B (f-m_{B,T}(f))\vp d\mu } =
		\abs{ \int_B f(\vp-m_{B,T}(\vp))d\mu } 
		&=2\abs{B}^{\frac1p-\frac1q} \abs{\ip{ \Lambda, \frac{ (\vp-m_{B,T}(\vp))\1_B }{2\abs{B}^{\frac1p-\frac1q}} }} \\ 
		& \leq 2\abs{B}^{\frac1p-\frac1q} \norm{ \Lambda }_{(h^{p,q}_\#(X))^*} .
	\end{align*}
	Then $ f-m_{B,T}(f) \in L^{q^\prime}(B)$ and 
	\begin{equation}\label{eq:5100*}
		\frac{1}{\abs{B}^{\frac1p-1+\frac1{q^\prime}}} \norm{f-m_{B,T}(f)}_{L^{q^\prime}(B)} \leq 2 \norm{\Lambda}_{(h^{p,q}_\#(X))^*} 
	\end{equation}	
	Summarizing, from \eqref{eq:10210*} and \eqref{eq:5100*} 
	we have $ f\in c_{1/p-1,q^\prime,T}(X) $ 
	and 
	\begin{equation}\label{eq:dual001*}
		\norm{f}_{c_{1/p-1,q^\prime,T}} \lesssim \norm{\Lambda}_{(h^{p,q}_\#(X))^*}. 
	\end{equation}
	This shows $ (h^{p,q}_\#(X))^*\subset c_{1/p-1,q^\prime,T}(X) $ for $ q\in [1,\infty) $.\\
	
	Now we move on to the case $q=\infty$. Let  $ \Lambda \in (h^{p,\infty}_{\#})^{\ast} $. If $ B \subset X $ is a ball with radius $ r_B\geq T $, for any $ g\in L^2(B) $ with $ \norm{g}_{L^2(B)}>0 $, we have that
	\[
	\tilde{g}:=\frac{  g \1_B }{\abs{B}^{\frac1p-\frac12}\norm{g}_{L^2(B)}}
	\]
	is a local $ (p,2) $-atom (in particular a $ (p,2) $-approximate atom). By \eqref{eq:413} in Proposition \ref{prop:502}, we have
	$ 	\norm{\tilde{g}}_{p,2}\leq 1$ and $ \norm{\tilde{g}}_{p,\infty}\leq C_{p,2} $.
	Then
	\begin{align*}
		\abs{ \ip{\Lambda,\1_B g} } 
		& \leq C_{p,2}\abs{B}^{\frac1p-\frac12} \norm{\Lambda}_{(h^{p,\infty}_{\#}(X))^*} \norm{ g }_{L^2(B)}.
	\end{align*}
	It means that $ \ip{\Lambda,\1_B (\cdot)} $ defines a bounded linear operator on $ L^2(B) $ and hence from Riesz representation theorem there exists unique $ f^{(B)} \in L^2(B)$ such that
	\begin{align*}
		\ip{ \Lambda,\1_B g }= \int_B f^{(B)} g \, d\mu , \qquad \text{for all } g\in L^2(B),	
	\end{align*}
	and
	\begin{align*}
		\norm{f^{(B)}}_{L^2(B)} \leq {C_{p,2}}\norm{\Lambda}_{(h^{p,\infty}_{\#}(X))^* } \abs{B}^{\frac1p-\frac12}. 
	\end{align*}
	As before, it allows us to define $ f\in L^2_{loc}(X) $ such that
	\begin{equation}\label{eq:1011*}
		\int fg d\mu =	 \ip{\Lambda, g},
	\end{equation} 
	for any $ g\in L^2(X) $ with bounded support and 
	\begin{equation}
		\norm{f}_{L^2(B)} 	\leq {C_{p,2}}\norm{\Lambda}_{(h^{p,\infty}_{\#}(X))^* } \abs{B}^{\frac1p-\frac12}, \label{eq:1021*}
	\end{equation}
	for any ball $ B $ with $ r_B\geq T $. In particular we have \eqref{eq:1011*} for elements in  $ h^{p,\infty}_{fin}(X) $. On the other hand, if $ B $ is a ball such that $ r_B\geq T $, by \eqref{eq:1021*} we have 
	\begin{align}
		\frac{1}{\abs{B}^{\frac1p}}\int \abs{f-m_{B,T}(f)}d\mu &
		\leq 	\frac{1}{\abs{B}^{\frac1p-\frac12}} \norm{f}_{L^2(B)}
		\leq {C_{p,2}} \norm{\Lambda}_{(h^{p,\infty}_{\#}(X))^*}. \label{eq:511*}
	\end{align} 
	If $ B $ is a ball with $ r_B<T $ and $ \vp \in L^2(B) $ such that $ \norm{\vp}_{L^2(B)} =1$,  then
	$
	\displaystyle{\tilde{\varphi} = \frac{[\vp-m_{B,T}(\vp)]\1_B }{ 2\abs{B}^{\frac1p-\frac12} }}
	$ 
	is a local $ (p,2) $-atom. From \eqref{eq:1011*}, we obtain
	\begin{align*}
		\abs{ \int_B (f-m_{B,T}(f))\vp d\mu } &	\leq \abs{ \int_B f(\vp-m_{B,T}(\vp))d\mu } 
		=2\abs{B}^{1/p-1/2} \abs{ \int \frac{ f(\vp-m_{B,T}(\vp))\1_B }{2\abs{B}^{1/p-1/2}} d\mu } \\
		& \leq 2{C_{p,2}}\abs{B}^{1/p-1/2} \norm{ \Lambda }_{(h^{p,\infty}_{\#}(X))^*} .
	\end{align*}
	So, 
	\begin{equation*}
		\norm{ f-m_{B,T}(f) }_{L^2(B)}\leq 2{C_{p,2}}\abs{B}^{1/p-1/2}\norm{ \Lambda }_{(h^{p,\infty}_{\#}(X))^*}.
	\end{equation*}
	and
	\begin{align}
		\frac{1}{\abs{B}^{1/p}}\int_B \abs{f-m_{B,T}(f)}& \leq \frac{1}{\abs{B}^{1/p-1/2}} \norm{f-m_{B,T}(f)}_{L^2(B)} 
		\leq 2{C_{p,2}}  \norm{\Lambda}_{(h^{p,\infty}_{\#}(X))^*} \label{eq:510*}
	\end{align}
	From \eqref{eq:511*} and \eqref{eq:510*}, we have $ f\in c_{1/p-1,1,T}(X) $ and 
	$$\norm{f}_{{c_{1/p-1,1,T}(X)}}\leq 2C_{p,\infty}\norm{\Lambda}_{(h^{p,\infty}_{\#}(X))^*}. $$
	This	shows $ (h^{p,\infty}_\#(X))^*\subset {c_{1/p-1,1,T}(X)} $.
\end{proof}

Straightforward from Propositions \ref{prop:502} and \ref{prop:duality} we have:
\begin{corollary}
		If $ \mu $ is a Borel regular measure, then $ c_{1/p-1,q,T}(X)= c_{1/p-1,1,T}(X)$ for all $ q\in(1,\infty]$, with equivalent norms.
\end{corollary}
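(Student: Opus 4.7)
The plan is to read off the statement as a short chain of identifications between Campanato spaces and duals of atomic local Hardy spaces. Fix $q\in(1,\infty]$ and let $q'\in[1,\infty)$ denote its Hölder conjugate (so $q'=1$ when $q=\infty$).

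First, I invoke Proposition \ref{prop:duality}(i), which identifies $(h^{p,q'}_\#(X))^\ast$ with $c_{1/p-1,q,T}(X)$ and gives equivalent norms, since $q'$ lies in $[1,\infty)$. Next, I use Proposition \ref{prop:502}: the Borel regularity of $\mu$ gives $h^{p,q'}_\#(X)=h^{p,\infty}_\#(X)$ with equivalent quasi-norms (note that this is exactly the hypothesis under which that proposition applies, including the case $q'=1$). Passing to dual spaces therefore yields
\begin{equation*}
\bigl(h^{p,q'}_\#(X)\bigr)^\ast \;=\; \bigl(h^{p,\infty}_\#(X)\bigr)^\ast
\end{equation*}
with equivalent norms.

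Finally, Proposition \ref{prop:duality}(i) provides the continuous embedding $c_{1/p-1,1,T}(X)\subset (h^{p,\infty}_\#(X))^\ast$, while Proposition \ref{prop:duality}(ii), which is precisely where Borel regularity is used a second time, provides the reverse continuous embedding $(h^{p,\infty}_\#(X))^\ast\subset c_{1/p-1,1,T}(X)$. Combining, $(h^{p,\infty}_\#(X))^\ast=c_{1/p-1,1,T}(X)$ with equivalent norms. Concatenating the three identifications gives
\begin{equation*}
c_{1/p-1,q,T}(X)\;=\;\bigl(h^{p,q'}_\#(X)\bigr)^\ast\;=\;\bigl(h^{p,\infty}_\#(X)\bigr)^\ast\;=\;c_{1/p-1,1,T}(X),
\end{equation*}
with equivalent norms, which is the desired conclusion.

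Essentially no obstacle is expected: the corollary is a purely formal consequence of the previous two results, and the only subtlety is bookkeeping the conjugate exponent so that $q'\in[1,\infty)$ (hence the case $q=1$ is not included in the statement, but $q=\infty$ is, corresponding to $q'=1$). Borel regularity is needed both to apply Proposition \ref{prop:502} and to obtain the second inclusion of Proposition \ref{prop:duality}, which is why the hypothesis appears in the corollary.
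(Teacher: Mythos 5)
Your proof is correct and is precisely the chain of identifications the paper intends: the paper gives no written argument beyond ``straightforward from Propositions \ref{prop:502} and \ref{prop:duality}'', and your dualization $c_{1/p-1,q,T}=(h^{p,q'}_\#)^\ast=(h^{p,\infty}_\#)^\ast=c_{1/p-1,1,T}$, with the bookkeeping that $q'\in[1,\infty)$ and that Borel regularity enters both in Proposition \ref{prop:502} and in Proposition \ref{prop:duality}(ii), is exactly that argument spelled out.
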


\subsection{Molecular decomposition}

In what follows, we define molecules with approximate moment conditions in $h^{p}_{\#}(X)$. Such theory was previously established for $H^p(X)$ in \cite[Section 3.1]{LiuChangFuYang2018} to describe the boundedness of Calder\'on-Zygmund operators. We will follow the same notation of \cite[Definition 3.2]{LiuChangFuYang2018}.

\begin{definition}\label{def:appmol}
	Let $0<p<1$ and $1\leq q \leq \infty$ with $p<q$, and $\lambda := \{\lambda_k\}_{k\in \N} \subset [0,\infty)$ satisfying
	\begin{equation} \label{lambda-type-condition}
		\left\| \lambda \right\|_{p}:= \sum_{k=1}^{\infty} k(\lambda_k)^p < \infty.
	\end{equation}
	A measurable function $ M $ in $ X $	 
	is called a $(p,q,T,\lambda)$-approximate molecule if there exists a ball $B=B(x_B,r_{B})\subset X$ such that
	\begin{enumerate}[i)]
		\item $\displaystyle \| M \, \1_{B} \|_{L^q} \leq |B|^{\frac1q-\frac1p}$; \\
		\item For any $k\in \N$, $\| M \, \1_{A_k}  \|_{L^q} \leq \lambda_k \, |2^kB|^{\frac1q-\frac1p}$, where $A_k = 2^kB \setminus 2^{k-1}B$; \\
		\item $\displaystyle 	\abs{ \int M \, d\mu }\leq |B(x_B,T)|^{1-\frac1p}.$
	\end{enumerate}
\end{definition}

As in Remark \ref{rem:approx_atoms} (i), the approximate moment condition (iii) in the previous definition is local, i.e., if $r_B\geq T$, from the size conditions (i) and (ii) of the molecule, we have
\begin{align*}
	\abs{ \int M \, d\mu } &\leq |B|^{1-\frac1q} \| M \, \1_{B} \|_{L^q} + \sum_{k=1}^{\infty} \lambda_k \, |A_k|^{1-\frac1q} \| M \, \1_{A_k}  \|_{L^q} 
	\leq |B|^{1-\frac1p} + \sum_{k=1}^{\infty} \lambda_k \, |2^kB|^{1-\frac1p} \\
	&\leq |B|^{1-\frac1p} \big(1+ \sum_{k=1}^{\infty} (\lambda_k)^p  \big) \\
	&\lesssim B(x_B,T)^{1-\frac1p}.
\end{align*}

It is clear that any $ (p,q,T)$-approximate atom is a $ (p,q,T,\lambda) $-approximate molecule for any sequence $ \lambda=\lla{\lambda_k}_{k\in \N} $ satisfying \eqref{lambda-type-condition}. On the other hand, a  $ (p,q,T,\lambda)$-approximate molecule associated to the null sequence $\lambda=\lla{0}_{k\in \N}$ is a $ (p,q,T) $-approximate atom. We point out the assumption  \eqref{lambda-type-condition} implies the sequence $ \lla{\lambda_k}_{k\in \N} \in \left(\ell^1\cap \ell^p\right)(\R)$. Moreover,  any $ (p,q,T,\lambda) $-approximate molecule $M$ centered in $B$ defines a distribution on $ \ell_{\frac1p-1,T}(X) $. In effect, let $ A_0=B $, $ A_j=2^jB\setminus 2^{j-1}B $ and $ \vp \in \ell_{1/p-1,T}$. Consider the indexes $j \in \N \cup \{0\}$ such that $ 2^jr_B\geq T $, thus using only the properties (i) and (ii) above we have
	\begin{align}
		\abs{\int_{2^jB} M\vp\, d\mu }&\leq \abs{\int_B M\vp\, d\mu} + \sum_{k=1}^j \abs{ \int_{A_k} M\vp \, d\mu } \\
		&\leq \norm{\vp}_{L^\infty(B)}\int_B\abs{ M}  d\mu + \sum_{k=1}^j \norm{\vp}_{L^\infty(A_j)}\int_{A_k}\abs{ M }d\mu \nn \\
		&\leq \norm{\vp}_{L^\infty(2^jB)} \pare{ \abs{B}^{1-\frac1p} + \sum_{k=1}^j\lambda_k  \abs{A_k}^{1-\frac1q} \abs{2^kB}^{\frac1q-\frac1p}} \nn\\
		&\leq \norm{\vp}_{L^\infty(2^jB)} \pare{ \abs{B}^{1-\frac1p} + \sum_{k=1}^j\lambda_k   \abs{2^kB}^{1-\frac1p}} \nn \\ 
		&\leq \norm{\vp}_{L^\infty(2^jB)} \pare{ \abs{B}^{1-\frac1p} + \abs{2^jB}^{1-\frac1p}\sum_{k=1}^j\lambda_k   (A')^{(j-k)(\frac1p-1)}} \nn \\ 
		& \leq \norm{\vp}_{L^\infty(2^jB)} \pare{ (A')^{j(\frac1p-1)}\abs{2^jB}^{1-\frac1p} + \abs{2^jB}^{1-\frac1p}\sum_{k=1}^j\lambda_k   (A')^{(j-k)(\frac1p-1)}} \nn \\
		&\leq \norm{\vp}_{L^\infty(2^jB)} \abs{2^jB}^{1-\frac1p} \pare{(A')^{j(\frac1p-1)}+\sum_{k=1}^j\lambda_k   (A')^{(j-k)(\frac1p-1)}} \nn\\
		& \leq (A')^{j(\frac1p-1)}\norm{\vp}_{\ell_{1/p-1,T}} \pare{1+\sum_{k=1}^j\lambda_k} \label{eq:moldistrib5}  
	\end{align}
	and also
	\begin{align}
		\abs{\int_{A_j} M\vp\, d\mu}\leq \norm{\vp}_{L^\infty(2^jB)}\abs{A_j}^{\frac1{q'}}\norm{M\1_{A_j}}_{L^q} \leq \norm{\vp}_{L^\infty(2^jB)} \lambda_j \abs{2^jB}^{1-\frac1p}\leq \norm{\vp}_{\ell_{1/p-1,T}} \lambda_j. \label{eq:moldistrib2}
	\end{align}
Combining the previous controls and choosing $ j_0\in \N $ such that $ 2^{j_0}r_B\geq T $, we conclude 
	\begin{align}
		\abs{\int M\vp\, d\mu}&\leq \abs{ \int_{2^{j_0}B} M\vp\, d\mu} +\sum_{k\geq j_0+1}  \abs{ \int_{A_k} M\vp\, d\mu} 
		\leq (A')^{j_0(\frac1p-1)}(1+\sum_{j=1}^{\infty}\lambda_{j})\norm{\vp}_{\ell_{1/p-1,T}}\label{eq:moldistrib6}
	\end{align}
that implies 
\begin{equation}\label{distri}
\|M\|_{\ell^{\ast}_{1/p-1,T}(X)} \leq (A')^{j_0(\frac1p-1)}(1+\sum_{j=1}^{\infty}\lambda_{j}),
\end{equation}
where the norm depends on B if $r_{B}<T$ (otherwise if $r_{B} \geq T$ we may choose $j_{0}=0$). Until this moment, we did  not use the moment condition (iii) that will be fundamental in order to obtain the uniform control of \eqref{distri}.  Let $j_0\in \N \cup \{0\} $ such that $ 2^{j_0}r_B<T $ and $ T\leq 2^{j_0+1}r_B $, then 
\begin{align}\label{4.7}
\abs{\int M\vp \, d\mu}& \leq \abs{\int_{B} M(x)(\vp(x)-\vp(x_B)) \, d\mu(x) }+ \abs{\int_{B^{c}} M(x)(\vp(x)-\vp(x_B)) \, d\mu(x) }\\ 
 &+\abs{\vp(x_B)}\abs{\int M(x) \, d\mu(x)}:= (I)+(II)+(III) \nn
\end{align}
where
\begin{align}
(I) &\leq  \norm{\vp}_{\ell_{1/p-1,T}}\abs{B}^{1/p-1}\int_B\abs{ M(x) }\, d\mu(x) \leq \norm{\vp}_{\ell_{1/p-1,T}}, \nn
\end{align}
\begin{align}
(II) &\leq \sum_{j=1}^{j_0} \abs{ \int_{A_j} M(x)(\vp(x)-\vp(x_B)) \, d\mu(x) } 
+ \sum_{j=j_0+1}^{\infty} \abs{ \int_{A_j} M(x)(\vp(x)-\vp(x_B)) \, d\mu(x) } \nn\\
&\leq  \norm{\vp}_{\ell_{1/p-1,T}} \sum_{j=1}^{j_0} \abs{2^jB}^{\frac1p-1} \int_{A_j} \abs{  M(x)} \, d\mu(x) +\sum_{j=j_0+1}^{\infty} 2\norm{\vp}_{L^\infty(2^jB)}\int_{A_j}\abs{M(x)} \, d\mu(x) \nn \\
&\leq  \norm{\vp}_{\ell_{1/p-1,T}}\sum_{j=1}^{j_0} \abs{2^jB}^{\frac1p-1} \abs{A_j}^{1-\frac1q} \lambda_j \abs{2^jB}^{\frac1q-\frac1p} + 2\sum_{j=j_0+1}^{\infty} \norm{\vp}_{L^\infty(2^jB)}\abs{A_j}^{1-\frac1q} \lambda_j \abs{2^jB}^{\frac1q-\frac1p} \nn\\
&\leq\norm{\vp}_{\ell_{1/p-1,T}}\sum_{j=1}^{j_0}  \lambda_j + 2\sum_{j=j_0+1}^{\infty} \norm{\vp}_{L^\infty(2^jB)}\lambda_j \abs{2^jB}^{1-\frac1p}\nn\\
&\leq 2\norm{\vp}_{\ell_{1/p-1,T}}(1+ \sum_{j=1}^{\infty}\lambda_j) \nn \label{eq:moldistrib8}
\end{align}
and  
$$(III) \leq \norm{\vp}_{L^\infty(B(x_B,T))} \abs{B(x_B,T)}^{1-\frac1p} \leq \norm{\vp}_{\ell_{1/p-1,T}}.$$
Plugging into \eqref{4.7} we have
\begin{equation}\label{4.8}
\abs{\int M\vp \, d\mu} \lesssim \norm{\vp}_{\ell_{1/p-1,T}}(1+ \sum_{j=1}^{\infty}\lambda_j),
\end{equation} 
and then $\|M\|_{\ell^{\ast}_{1/p-1,T}(X)} \lesssim (1+\sum_{j=1}^{\infty}\lambda_{j}).$
Analogously, from the size conditions (i) and (ii) of molecules, we have  $ M\in L^q(X) $. In effect, 
\begin{align*}
\norm{M}_{L^q}\leq \norm{M\1_B}_{L^q} +\sum_{k=1}^{\infty} \norm{M\1_{A_k}}_{L^q} 
&\leq \abs{B}^{\frac1q-\frac1p} +\sum_k\lambda_k\abs{2^{k}B}^{\frac1q-\frac1p}
&\leq \abs{B}^{\frac1q-\frac1p}\left(1+\sum_k\lambda_k\right). 
\end{align*}
The same argument shows that for any $k \in \N$ such that $T<2^{k}r_B$ and $\lambda_{k}\neq 0$, then the function $ M_{k}:=(\lambda_k)^{-1}M\1_{A_k} $ is a $ (p,q,T)$-approximate atom supported in the ball $2^kB$. 
In fact, since $ \abs{B(x_0,T)}\leq \abs{2^kB} $ we have
\begin{align*}
	\abs{ \int M \1_{A_k} d\mu}&\leq \norm{M\1_{A_k}}_{L^q}\norm{\1_{A_k}}_{L^{q'}} 
	\leq \lambda_k \abs{ 2^kB }^{\frac1q-\frac1p}\abs{2^kB}^{\frac{1}{q'}}
	\leq \lambda_k \abs{B(x_0,T)}^{1-\frac1p}.
\end{align*}
Assuming without loss of generality that $\lambda_{k} \neq 0$ for any $k \in \N$ and taking 
$ k_0 $ the smallest positive integer such that $ T\leq 2^{k_0} r $ we may write
\begin{align*}
M &= \left(  M\1_B+\sum_{j=1}^{k_0-1} M\1_{A_j}  \right)+ \sum_{j=k_0}^{\infty} \lambda_j M_{j} 
= M\1_{2^{k_{0}-1}B}+ \sum_{j=k_0}^{\infty} \lambda_j M_{j} :=M_{a}+M_{b}
\end{align*}
with $\norm{M_b}_{p,q}\leq \pare{\displaystyle{\sum_{j=k_0}^{\infty} \lambda_j^p}}^{1/p} $. \\

The Definition \ref{def:appmol} covers the approximate molecules defined in \cite{DafniPicon22} when $X=\R^{n}$ equipped with the Lebesgue measure $\mu=\mathcal{L}$ and $\frac{n}{n+1}<p<1$. In fact, recall from \cite[Definition 3.5]{DafniPicon22} that in this setting we say that a mensurable function $M$ is a $(p,q,\lambda,\omega)$-molecule for $1\leq q <\infty$ and $\lambda>n(q/p-1)$ if there exist a ball $B \subset \R^{n}$ and a constant $C>0$ such that
\begin{enumerate}
\item [M1.] $\displaystyle\|M\|_{L^{q}(B)}\leq C(r_{B})^{n\left(\frac{1}{q}-\frac{1}{p}\right)}$
\item [M2.] $\displaystyle \|M|\cdot - x_{B}|^{\frac{\lambda}{n}}\|_{L^{q}(B^{c})}\leq C(r_{B})^{\frac{\lambda}{q}+n\left(\frac{1}{q}-\frac{1}{p}\right)}$
\item [M3.] $\displaystyle \left| \int_{\R^{n}} M(x)dx  \right|\leq \omega $ 
\end{enumerate} 
Choosing $C:=\mathcal{L}(S^{n-1})^{\frac{1}{q}-\frac{1}{p}}$, $\omega:= |B(x_B,T)| ^{1-\frac1p}$ and $\lambda_{k}:= 2^{\frac{\lambda}{q}-k \left[\frac{\lambda}{q}-n\left(\frac{1}{q}-\frac{1}{p}\right)\right] }$, then the conditions
M1-M3 imply (i)-(iii) at Definition \ref{def:appmol} with 
\begin{equation}\label{lp}
\sum_{k=1}^{\infty}(\lambda_{k})^{p}=2^{\frac{\lambda p}{q}} \sum_{k=1}^{\infty} 2^{-kp\left[\frac{\lambda}{q}-n\left(\frac{1}{q}-\frac{1}{p}\right)\right] }<\infty, 
\end{equation}
since $\lambda>n\left({q}/{p}-1 \right)$. We remark that the condition \eqref{lp} is weaker in comparison to 
\eqref{lambda-type-condition} (see also Remark \ref{remark:a-h-mol}). \\

In the next proposition, we show the fundamental property that approximate molecules can be decomposed  in terms of approximate atoms with uniform control in $h^{p,q}_{\#}(X)$-norm.

\begin{proposition}\label{prop:unifboundmol} 
	Let $ 0<p<1\leq q\leq \infty $ and $ M $ be a $(p,q,T,\lambda)$-approximate molecule. Then there exist a sequence $\{ \beta_{j} \}_{j} \in \ell^{p}(\C) $ and $\{ a_{j} \}_{j}  $  of $ (p,q,T) $-approximate atoms such that
		\begin{equation}\label{eq:mol5}
			M=\sum_{j=0}^\infty \beta_j a_j, \quad \quad \text{in} \,\,\, L^{q}(X) 
		\end{equation}
with $ \displaystyle{\bigg(\sum_j\abs{\beta_j}^p\bigg)^{1/p}\leq C_{A,p}\| \lambda \|_p} $. Moreover,  the convergence of \eqref{eq:mol5} is in $ \ell^{\ast}_{1/p-1,T}(X) $ and $\| M \|_{{p,q} }\leq C_{A,p} \| \lambda \|_p$.
\end{proposition}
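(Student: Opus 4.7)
The text preceding the statement already splits $M = M_a + M_b$ with $M_a = M\1_{2^{k_0-1}B}$ and $M_b = \sum_{k\geq k_0}\lambda_k M_k$, identifying each $M_k = \lambda_k^{-1} M\1_{A_k}$ as a $(p,q,T)$-approximate atom and giving $\|M_b\|_{p,q}^p \leq \sum_{k\geq k_0}\lambda_k^p$. The whole task is therefore to produce an atomic decomposition of the inner finite-scale piece $M_a$ with $p$-norm controlled by $\|\lambda\|_p$; the non-trivial issue is absorbing the global moment hypothesis (iii) into localized approximate atoms.

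I would start by separating the mean of $M$ on each annulus: for $0 \leq l \leq k_0 - 1$, write
\[
M\1_{A_l} = b_l + I_l\,\eta_{A_l}, \qquad I_l := \int_{A_l} M\, d\mu, \qquad \eta_{A_l} := \1_{A_l}/|A_l|,
\]
so that $b_l := M\1_{A_l} - I_l\eta_{A_l}$ has zero mean and, by the size condition (ii) of the molecule, $\|b_l\|_{L^q} \leq 2\lambda_l|2^lB|^{1/q - 1/p}$. Thus $b_l/(2\lambda_l)$ is a local $(p,q)$-atom on $2^lB$, with total $p$-contribution dominated by $\sum_l \lambda_l^p$. It remains to decompose the ``constants on annuli'' remainder $\sum_{l=0}^{k_0-1}I_l\eta_{A_l}$.

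The decisive step is to apply Abel summation with the tail sums $\sigma_l := \sum_{k\geq l}I_k$ (so that $I_l = \sigma_l - \sigma_{l+1}$), which produces
\[
\sum_{l=0}^{k_0-1}I_l\eta_{A_l} = \Bigl(\int M\Bigr)\eta_{A_0} \;-\; \sigma_{k_0}\eta_{A_{k_0-1}} \;-\; \sum_{j=0}^{k_0-2}\sigma_{j+1}\bigl(\eta_{A_j}-\eta_{A_{j+1}}\bigr).
\]
The first term is already a $(p,q,T)$-approximate atom on $B$ with coefficient $1$: indeed (iii) gives $|\int M|\leq |B(x_B,T)|^{1-1/p}$, which is exactly the bound needed for both the $L^q$-size and the moment requirement on $B$ (using $|B|\leq |B(x_B,T)|$ and $1-1/p<0$). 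The second term lives on $2^{k_0-1}B$ (radius $<T$), and the bounds $|I_k|\leq \lambda_k|2^kB|^{1-1/p}$ together with $|2^kB|^{1-1/p}\leq |B(x_B,T)|^{1-1/p}$ for $k\geq k_0$ yield a coefficient at most $\sum_{k\geq k_0}\lambda_k$ relative to a normalized approximate atom on $2^{k_0-1}B$. In the third group, each $N_j := \eta_{A_j}-\eta_{A_{j+1}}$ has zero mean and is supported in $2^{j+1}B$, so $|2^{j+1}B|^{1-1/p}N_j$ is (up to the doubling constant) a local $(p,q)$-atom on $2^{j+1}B$, and the coefficient of $\sigma_{j+1}N_j$ as a multiple of this atom is bounded by
\[
\frac{|\sigma_{j+1}|}{|2^{j+1}B|^{1-1/p}} \,\leq\, \sum_{k>j}\lambda_k\Bigl(\frac{|2^kB|}{|2^{j+1}B|}\Bigr)^{\!1-1/p} \leq \sum_{k>j}\lambda_k.
\]
Summing in $p$-th power and swapping the order of summation,
\[
\sum_{j=0}^{k_0-2}\Bigl(\sum_{k>j}\lambda_k\Bigr)^{\!p} \leq \sum_{j=0}^{k_0-2}\sum_{k>j}\lambda_k^p \leq \sum_{k\geq 1}k\,\lambda_k^p = \|\lambda\|_p,
\]
which is exactly where the weight $k$ in the hypothesis \eqref{lambda-type-condition} is consumed. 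Convergence in $L^q(X)$ is then immediate, since the decomposition of $M_a$ is a finite sum and $\|M\1_{(2^NB)^c}\|_{L^q}\to 0$ follows from $M\in L^q(X)$ (established before the statement); convergence in $\ell^{\ast}_{1/p-1,T}(X)$ and the quasi-norm bound $\|M\|_{p,q}\leq C_{A,p}\|\lambda\|_p$ then follow from Proposition \ref{prop:414}.

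The main obstacle is selecting the right telescoping. Using partial sums $J_l = \sum_{k\leq l}I_k$ in place of the tail sums $\sigma_l$ would leave a spurious coefficient of size $|B(x_B,T)|^{1-1/p}$ at every one of the $k_0 - 1$ inner scales, yielding a contribution of order $k_0$ (hence depending on $r_B$) that no sequence $\lambda$ can absorb. The tail-sum choice confines the global moment hypothesis into the single ``boundary'' atom $(\int M)\eta_{A_0}$, and the factor $k$ in $\|\lambda\|_p$ then emerges organically from the order-swap in the final estimate.
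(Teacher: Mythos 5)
Your overall strategy coincides with the paper's: remove the mean of $M$ on each annulus to produce zero-mean local atoms, then Abel-sum the remaining ``constants'' using the tail sums $\sigma_l=\sum_{k\geq l}I_k$, so that the global moment hypothesis (iii) is absorbed by the single boundary term $(\int M)\,\1_B/|B|$ and the weight $k$ in \eqref{lambda-type-condition} is consumed by the order-swap $\sum_j\bigl(\sum_{k>j}\lambda_k\bigr)^p\leq\sum_k k\lambda_k^p$. All of that, including the coefficient estimates, matches the paper's proof.

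There is, however, a genuine gap in the step where you claim that $|2^{j+1}B|^{1-1/p}N_j$, with $N_j=\eta_{A_j}-\eta_{A_{j+1}}$ (and likewise the term $\sigma_{k_0}\eta_{A_{k_0-1}}$), is a local $(p,q)$-atom ``up to the doubling constant''. You normalize by the measure of the \emph{annuli}, $\eta_{A_l}=\1_{A_l}/|A_l|$, so that $\|N_j\|_{L^q}\leq |A_j|^{1/q-1}+|A_{j+1}|^{1/q-1}$; for $q>1$ this is \emph{not} controlled by $|2^{j+1}B|^{1/q-1}$, because in a general space of homogeneous type an annulus $2^jB\setminus 2^{j-1}B$ can have measure far smaller than $|2^jB|$ --- it can even be empty (the paper itself records that $X$ may contain points $y$ with $B(y,r_y)=\{y\}$), in which case $\eta_{A_j}$ is not defined at all. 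No reverse doubling is assumed, so the size condition of your building blocks fails for $q>1$; your estimates are fine for $q=1$, but the proposition is needed for all $1\leq q\leq\infty$ (in particular $q=2$ and $q=\infty$ later in the paper). The repair is precisely the paper's choice: redistribute each annular mean $I_k$ over the full ball $2^kB$ rather than over $A_k$, i.e.\ replace $\eta_{A_k}$ by $\chi_k=\1_{2^kB}/|2^kB|$ throughout. Then $M\1_{A_k}-I_k\chi_k$ is still a zero-mean multiple of an atom with $L^q$-norm at most $2\lambda_k|2^kB|^{1/q-1/p}$, and $\|\chi_{k+1}-\chi_k\|_{L^q}\leq 2|2^kB|^{1/q-1}\leq 2A'|2^{k+1}B|^{1/q-1}$ follows from doubling alone; with this substitution your Abel summation, the boundary atom $(\int M)\1_B/|B|$, and all of your coefficient bounds go through unchanged.
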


\begin{proof}
	Let $M$ be a $(p,q,T,\lambda)$-approximate molecule concentrated on $B=B(x_{B},r_{B})$ and consider  $A_k = 2^kB \setminus 2^{k-1}B$ for $k \geq 1$ and $ A_0:=B $. Define
	\[
	M_k := M \, \1_{A_k} - \frac{\1_{2^kB}}{|2^kB|} \int_{X}M \, \1_{A_k} d\mu \quad \text{and} \quad \widetilde{M}_k := \frac{\1_{2^kB}}{|2^kB|} \int_{X}M \, \1_{A_k} d\mu.
	\]
	Then
	\begin{equation}\label{eq:ubm1}
		M=\sum_{k=0}^{\infty} M_k + \sum_{k=0}^{\infty} \widetilde{M}_k, 
	\end{equation}
where each $M_k/\left(2\lambda_k\right)$ is a $(p,q)$-atom in $H_{cw}^p(X)$ supported in $2^{k}B$ (here consider $ \lambda_0:=1 $). In effect, it is straightforward from the definition that $\supp(M_k)\subset 2^kB$ and that it satisfies vanishing moment conditions. Moreover
	\begin{align}
		\norm{M_k}_{L^q}\leq \norm{M \1_{A_k}}_{L^q}+\abs{\int_X M\1_{A_k}\, d\mu}\frac{\norm{\1_{2^kB}}_{L^q}}{|2^kB|} &\leq \lambda_k\abs{2^kB}^{\frac1q-\frac1p}
		+\norm{M\1_{A_k}}_{L^q}\norm{\1_{A_k}}_{L^{q'}}\abs{2^kB}^{\frac1q-1} \nn\\
		& \leq 2\lambda_k \abs{2^kB}^{\frac1q-\frac1p}. \label{eq:mol3}
	\end{align}
	Thus, we may write $\displaystyle \sum_{k=0}^{\infty} M_{k}= \sum_{k=0}^{\infty} 2\lambda_k\pare{{M_k}/{2\lambda_k}} $ is an element of $ h^{p,q}_{\#}(X) $, and moreover
	$
	\displaystyle{\left\| \sum_{k=0}^{\infty} M_k \right\|_{p,q} \leq 2 \left( \sum_{k=0}^{\infty} (\lambda_k)^p \right)^{1/p} < \infty.}
	$
	To control the second term in \eqref{eq:ubm1}, let
	$$
	\chi_k=\frac{\1_{2^kB}}{|2^kB|}, \quad \widetilde{m}_k = \int_{X} M \, \1_{A_k} d\mu, \quad \text{and} \quad N_j = \sum_{k=j}^{\infty} \widetilde{m}_k.
	$$
	Then,
	\[
	\sum_{k=0}^{\infty} \widetilde{M}_k = \sum_{k=0}^{\infty} \chi_{k} \widetilde{m}_k = \sum_{k=0}^{\infty} \chi_{k} [N_k-N_{k+1}] = \chi_{0} N_0 + \sum_{k=0}^{\infty} \sum_{j=k+1}^{\infty} b_{k,j},
	\]
with $b_{k,j}:=[\chi_{k+1}-\chi_{k}] \widetilde{m}_j$.  
We claim that $b_{k,j}$ is a multiple of $(p,q)$-atom  in $H_{cw}^p(X)$ supported in $2^{k+1}B$. In fact, clearly $\int b_{k,j}\, d\mu=0$ and moreover 
	\begin{align*}
		\Norm{[\chi_{k+1}-\chi_{k}] \widetilde{m}_j}_{L^q(X)}&\leq \abs{ \int M\1_{A_j}d\mu }(\norm{\chi_{k+1}}_{L^q(X)}+\norm{\chi_k}_{L^q(X)})\\
		&\leq \norm{ M\1_{A_j} }_{L^q(X)}\norm{ \1_{A_j} }_{L^{q'}(X)}\pare{ \frac{1}{\abs{2^{k+1}B}}\norm{\1_{2^{k+1}B}}_{L^q(X)} + \frac{1}{\abs{2^{k}B}}\norm{\1_{2^{k}B}}_{L^q(X)}} \\
		&\leq \lambda_j \abs{ 2^jB }^{\frac1q-\frac1p} \abs{A_j}^{1-\frac1q}\frac{ 2\abs{2^{k+1}B}^{\frac1q} }{\abs{ 2^kB }}\\
		&= \left(2\lambda_j \abs{ 2^jB }^{\frac1q-\frac1p} \abs{A_j}^{1-\frac1q} {\frac{\abs{ 2^{k+1}B }^{\frac1p}}{\abs{2^kB}}} \right) \abs{2^{k+1}B}^{\frac1q - \frac1p}. 
	\end{align*}
Now, note that
\begin{equation}\label{eq:mol10}
\abs{ 2^jB }^{\frac1q-\frac1p} \abs{A_j}^{1-\frac1q} {\frac{\abs{ 2^{k+1}B }^{\frac1p}}{\abs{2^kB}}}\leq {A'} \abs{ 2^jB }^{1-\frac1p}\abs{ 2^{{k+1}}B }^{\frac1p -1}\leq  {A'} 
\end{equation}
where for $j \geq k+1$ we use the simple control 
$$ 
{|2^{k+1}B|\leq |2^{j}B|.}
$$
Thus  $b_{k,j}/\left( 2\lambda_j {A'} \right)$ is a $(p,q)$-atom in $H_{cw}^p(X)$ supported in $2^{k+1}B$,
 $\displaystyle{ \sum_{k=0}^{\infty} \sum_{j=k+1}^{\infty} b_{k,j} \in h^{p,q}_{\#}(X) }$ and moreover by \eqref{lambda-type-condition}
	\[
	\left\| \sum_{k=0}^{\infty} \sum_{j=k+1}^{\infty} b_{k,j} \right\|_{h^{p,q}_{\#}} \lesssim \left[ \sum_{k=0}^{\infty} \sum_{j=k+1}^{\infty} (\lambda_j)^p \right]^{1/p} \sim \left[ \sum_{j=1}^{\infty} j(\lambda_j)^p \right]^{1/p}<\infty.
	\]	

Also note that, by \eqref{eq:mol10} we also have 
\begin{align}
	\sum_{k=0}^{\infty}\sum_{j=k+1}^{\infty} \norm{b_{k,j}}_{L^q}\leq 2 A' \sum_{k=0}^{\infty}\sum_{j=k+1}^{\infty} \lambda_j \abs{ 2^{k+1}B }^{\frac1q-\frac1p}
	&\leq 2 A' \abs{B}^{\frac1q-\frac1p}\sum_{k=0}^{\infty}\sum_{j=k+1}^{\infty} \lambda_j \nn\\
	&\lesssim  2 A' \abs{B}^{\frac1q-\frac1p}\sum_{j=1}^{\infty} j\lambda_j^p. \label{eq:mol2}
\end{align}

Finally, we claim that $ \chi_0 N_0=\abs{B}^{-1}( \int_X M d\mu)\1_B $ is a multiple constant of a $ (p,q,T) $-approximate atom supported in $ B $. First note that 
	\begin{align*}
		\sum_{j=0}^{\infty}\abs{ \int M\1_{A_j} \, d\mu }\leq \sum_{j=0}^{\infty} \norm{M\1_{A_j}}_{L^q(X)} \norm{\1_{A_j}}_{L^{q'}(X)} \leq \sum_{j=0}^{\infty} \lambda_j \abs{ 2^jB }^{\frac1q-\frac1p}\abs{2^jB}^{1-\frac1q} 
		&\leq \sum_{j=0}^{\infty} \lambda_j \abs{2^jB}^{1-\frac1p} \\ &\leq \abs{B}^{1-\frac1p}\sum_{j=0}^{\infty} \lambda_j .
	\end{align*}
	Then, we have
	\begin{align*}
		\norm{\chi_0 N_0}_{L^q(X)}
		\leq \abs{B}^{\frac1q-1}\abs{ \sum_{j=0}^{\infty}\int M\1_{A_j} d\mu }  \leq \abs{B}^{\frac1q-1}\abs{B}^{1-\frac1p}\sum_{j=0}^{\infty} \lambda_j 
		 = \abs{B}^{\frac1q-\frac1p} \|\lambda\|_{\ell^1}
	\end{align*}
and clearly the approximate moment condition follows immediately from (iii) since \\$\int_{X}\chi_0 N_0 \, d\mu=\int_X M d\mu$.	

		On the other hand, from \eqref{eq:moldistrib2} we have for any $ \vp\in \ell_{1/p-1,T}(X)$
		\[
		\abs{\int \bigg(M-\sum_{j=1}^{m} M_k-\sum_{j=1}^m \widetilde{M}_k \bigg)\vp \, d\mu}  = \abs{\int_{X\setminus 2^m B} M \vp \, d\mu} \leq \sum_{j=m+1}^\infty 
		\abs{\int_{A_j}M\vp d\mu}\leq \norm{\vp}_{\ell_{\frac1p-1,T} }\sum_{j=m+1}^\infty\lambda_j.
		\]
		Taking $ m $ sufficiently large this shows the convergence in
		\begin{equation}\label{eq:mol4}
			M=\chi_0 N_0+\sum_{k=0}^{\infty} M_k+\sum_{k=0}^\infty \sum_{j=k+1}^{\infty} b_{k,j}
		\end{equation}
		is also in $ \ell^{\ast}_{1/p-1,T}(X) $.
	
	Summarizing we have $ M \in h^{p,q}_{\#}(X) $ with $ \norm{M}_{p,q}\leq C \| \lambda\|_p $ where $C=C(A',p)>0$, and also by \eqref{eq:mol3} and \eqref{eq:mol2}  the decomposition in \eqref{eq:mol4} converges in $ L^q $-norm.
	
\end{proof}

A direct consequence in the proof of last proposition is the following:

\begin{corollary} \label{corollary:molecular-decomp}
	Let $\{M_j\}_{j}$ be a sequence of $(p,q,T,\lambda)$-approximate molecules and $\{\beta_j\}_{j} \in \ell^{p}(\C)$. Then $\displaystyle f :=\sum_{j=0}^{\infty} \beta_j \, M_j  \in h^{p,q}_{\#}(X)$ and 
	\begin{equation}\label{enda}
	\|f\|_{p,q} \leq C_{A,p} \, \| \lambda \|_p \, \bigg( \sum_{j=1}^{\infty} |\beta_j|^p\bigg)^{1/p}. 
	\end{equation}
\end{corollary}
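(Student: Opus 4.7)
The plan is to combine Proposition \ref{prop:unifboundmol} with the completeness of $h^{p,q}_\#(X)$ established in Proposition \ref{prop:500}. The idea is simply to decompose each molecule into approximate atoms and then reindex the resulting double sum into a single atomic decomposition of $f$.

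More precisely, I would first apply Proposition \ref{prop:unifboundmol} to each $M_j$: this yields a sequence $\{a_{j,k}\}_k$ of $(p,q,T)$-approximate atoms and coefficients $\{\alpha_{j,k}\}_k \in \ell^p(\C)$ such that
\begin{equation*}
M_j = \sum_{k=0}^\infty \alpha_{j,k}\, a_{j,k} \quad \text{in } L^q(X) \text{ and in } \ell^{\ast}_{1/p-1,T}(X),
\end{equation*}
with $\sum_{k} \abs{\alpha_{j,k}}^p \leq C_{A,p}^p\, \norm{\lambda}_p^p$. Multiplying by $\beta_j$ and formally summing, I obtain the candidate decomposition
\begin{equation*}
f = \sum_{j=0}^\infty \sum_{k=0}^\infty (\beta_j \alpha_{j,k})\, a_{j,k}.
\end{equation*}

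Next I would verify that this double series truly represents $f$ as an element of $h^{p,q}_\#(X)$. Setting $f_j := \beta_j M_j$, Proposition \ref{prop:unifboundmol} gives $\norm{f_j}_{p,q} \leq C_{A,p}\, \norm{\lambda}_p\, \abs{\beta_j}$, so
\begin{equation*}
\sum_{j=0}^\infty \norm{f_j}_{p,q}^p \leq C_{A,p}^p\, \norm{\lambda}_p^p\, \sum_{j=0}^\infty \abs{\beta_j}^p < \infty.
\end{equation*}
By Proposition \ref{prop:500}, $(h^{p,q}_\#(X),d_{p,q})$ is a complete metric space and $\norm{\cdot}_{p,q}$ is a $p$-norm; hence $\sum_j f_j$ converges in $d_{p,q}$ to some element of $h^{p,q}_\#(X)$, which by \eqref{eq:402} must agree with $f$ in $\ell^{\ast}_{1/p-1,T}(X)$.

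Finally, the quasi-norm estimate \eqref{enda} follows from the $p$-subadditivity of $\norm{\cdot}_{p,q}^p$:
\begin{equation*}
\norm{f}_{p,q}^p \leq \sum_{j=0}^\infty \sum_{k=0}^\infty \abs{\beta_j \alpha_{j,k}}^p = \sum_{j=0}^\infty \abs{\beta_j}^p \sum_{k=0}^\infty \abs{\alpha_{j,k}}^p \leq C_{A,p}^p\, \norm{\lambda}_p^p\, \sum_{j=0}^\infty \abs{\beta_j}^p,
\end{equation*}
and taking $p$-th roots yields the claim. There is no genuine obstacle here beyond the bookkeeping of double sums; the essential work was already carried out in Proposition \ref{prop:unifboundmol}, and completeness of the space takes care of convergence.
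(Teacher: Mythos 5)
Your proposal is correct and follows essentially the same route as the paper: decompose each $M_j$ into approximate atoms via Proposition \ref{prop:unifboundmol}, reindex the double sum, and conclude by $p$-subadditivity of $\norm{\cdot}_{p,q}^p$. The only cosmetic difference is that you justify convergence of $\sum_j \beta_j M_j$ through the completeness result of Proposition \ref{prop:500} together with \eqref{eq:402}, whereas the paper first verifies $f\in \ell^{\ast}_{1/p-1,T}(X)$ directly from the distributional bound \eqref{4.8}; both are valid.
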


\begin{proof} We first remark that
$\displaystyle f \in \ell^{\ast}_{1/p-1,T}(X)$. In fact, it follows from \eqref{4.8}
that
\begin{align*}
		\|f\|_{\ell^{\ast}_{1/p-1,T}(X)} \leq 2\bigg(1+\sum_{j=1}^{\infty}\lambda_j\bigg) \bigg( \sum_{j=1}^{\infty} |\beta_j|^p\bigg)^{1/p}. 
	\end{align*}
In particular, the convergence $\displaystyle{\sum_{j=0}^{\infty} \beta_j \, M_j}$ is in $\ell^{\ast}_{1/p-1,T}(X)$ and by \eqref{eq:mol5} each $\displaystyle M_{j}=\sum_{k=1}^{\infty}\theta_{jk}a_{jk}$ with   $a_{jk}$ $(p,q,T,\lambda)$-approximate atoms and $ \displaystyle{\bigg(\sum_k\abs{\theta_{jk}}^p\bigg)^{1/p}\leq C_{A,p}\| \lambda \|_p} $, it follows
$$f=\sum_{jk}\beta_{j}\theta_{jk}a_{jk}$$
and analogously as done in \eqref{eq:414} follows \eqref{enda}.    	
\end{proof}

\begin{remark} \label{remark:a-h-mol}
	Condition \eqref{lambda-type-condition} can be weakened to the natural one $\displaystyle{\sum_{k=1}^\infty (\lambda_k)^p <\infty}$, when 
in addition to \eqref{upper} we also have the next special case of the \emph{reverse  doubling condition}: there exists a $ A''\in (0,1] $ such that for all $ x\in X $, $ r>0 $ and $ \lambda\geq 1 $, we have
	\begin{equation}\label{rupper}
		A''\lambda^{\gamma} \abs{B(x,r)} \leq \abs{B(x,\lambda r)}.
	\end{equation}
In effect, from the previous proof we have obtained that $b_{k,j}/\left( 2A'\lambda_j \abs{2^jB}^{1-\frac1p}\abs{2^{k+1}B}^{\frac1p-1} \right)$
	is a $ (p,q) $-atom supported in $ 2^{k+1}B $. Then, using \eqref{upper} and \eqref{rupper}, the $ h^{p,q}_\#$-norm of $\displaystyle{ \sum_{k=0}^{\infty} \sum_{j=k+1}^{\infty} b_{k,j} }$  is bounded up to a constant by
	\[
	\sum_{k=0}^{\infty} \corc{2^{(k+1)(1-p)\textcolor{olive}{\gamma}}\sum_{j=k+1}^{\infty} \lambda_j^p 2^{j(p-1)\textcolor{olive}{\gamma}} } \leq \sum_{j=1}^\infty \lambda_j^p\sum_{k=0}^{j-1} 2^{(p-1)\textcolor{olive}{\gamma} k} \leq \frac{1}{1-2^{(p-1)\textcolor{olive}{\gamma}}}\sum_{j=1}^\infty \lambda_j^p. 
	\] 
Examples of doubling measures satisfying the reverse condition \eqref{rupper} are giving by Alhfors regular measures, i.e. measures characterized by the property $ \abs{B(x,r)}\approx r^\theta$ for all $ r>0 $ and some $ \theta>0 $.
\end{remark}

\section{Relation between $ h^{p}_{\#}(X) $ and $h^p(X)$ }\label{sec:rel}

In this section, we present the relation between $h^{p}_{\#}(X)$ and the  local Hardy space $h^{p}(X)$ introduced in \cite{HeYangWen21}. We will set $T=1$ in the definition of $h^{p}_{\#}(X)$  and we always  assume $ \mu $ a Borel regular measure in $ X $.

We start presenting the definition of test functions considered by \cite[Definition 2.1]{HeYangWen21}.

\begin{definition}\label{def:testfuncchina}
	Let $ x_0\in X $, $ r\in (0,\infty) $, $ \beta\in(0,1] $ and $ \theta\in (0,\infty) $. A function $ f $ defined on $ X $ is called by a \emph{test function of type} $ (x_0,r,\beta,\theta) $, denoted by $ f\in \mathcal{G}(x_0,r,\beta,\theta) $, if there exists a positive constant $ C $ such that
	\begin{enumerate}[(i)]
		\item (\emph{Size condition}) for all $ x\in X $,
		\[
		\abs{f(x)}\leq C\frac{1}{V_r(x_{0})+V(x_0,x)}\left[ \frac{r}{r+d(x_0,x)} \right]^\theta
		\]
		\item (\emph{Regularity condition}) for any $ x,y\in X $ satisfying $ d(x,y)\leq (2A_0)^{-1}(r+d(x_0,x)) $,
		\[
		\abs{f(x)-f(y)}\leq C \frac{1}{V_r(x_0)+V(x_0,x)}\left[ \frac{d(x,y)}{r+d(x_0,{x})}\right]^\beta\left[ \frac{r}{r+d(x_0,x)}\right]^\theta. 
		\]
	\end{enumerate}
	For $ f\in\mathcal{G}(x_0,r,\beta,\theta) $ it is defined 
	\[
	\norm{f}_{\mathcal{G}(x_0,r,\beta,\theta)}:= \inf \lla{C\in (0,\infty) : C\text{ satisfying (i) e (ii)} },
	\]
	and also the set
	\[
	\dot{\mathcal{G}}(x_0,r,\beta,\theta):= \lla{f\in \mathcal{G}(x_0,r,\beta,\theta)  \ : \ \int_X f\,d\mu=0 }.
	\]
	equipped with norm $ \norm{\cdot}_{\dot{\mathcal{G}}(x_0,r,\beta,\theta)}:=\norm{\cdot}_{\mathcal{G}(x_0,r,\beta,\theta)} $.
\end{definition}

We highlight in the following remark some properties of the set $ \mathcal{G}(x_0,r,\beta,\theta) $ discussed in \cite{HeYangWen21}.

\begin{remark}\label{r6.2}
	\textcolor{white}{.}
	\begin{enumerate}	
		\item[(i)] For each $ x_0 $ fixed, we have $ \mathcal{G}(x,r,\beta,\theta) = \mathcal{G}(x_0,1,\beta,\theta)$ for any $ x\in X $ and $ r>0 $. Moreover, there exists $C=C(x,r)>0$ such that
		\begin{equation}\label{en:num111}
			C\norm{f}_{\mathcal{G}(x_0,1,\beta,\theta)} \leq  \norm{f}_{\mathcal{G}(x,r,\beta,\theta)} \leq C^{-1}\norm{f}_{\mathcal{G}(x_0,1,\beta,\theta)}.
		\end{equation}
		For this, we denote $\mathcal{G}(\beta,\theta)= \mathcal{G}(x_0,1,\beta,\theta) $ and $\dot{\mathcal{G}}(\beta,\theta)= \dot{\mathcal{G}} (x_0,1,\beta,\theta) $. \\
		\item[(ii)] $ \mathcal{G}(x_0,1,\beta,\theta) $ is a Banach space with the norm $ \norm{\cdot }_{\mathcal{G}(x_0,1,\beta,\theta)} $. \\
		
		\item[(iii)] If $ 0<\beta_1<\beta_2 $, then $ \mathcal{G}(\beta_2,\theta) \subset \mathcal{G}(\beta_1,\theta)$ continuously, for all $ \theta>0 $. Analogously, if $ 0<\theta_1<\theta_2 $ then $ \mathcal{G}(\beta,\theta_2) \subset \mathcal{G}(\beta,\theta_1)$ continuously, for all $ \beta \in (0,1] $.\\
		\item[(iv)] For $ \ve\in (0,1] $ and $ \beta,\theta\in (0,\ve] $, it is denoted by $ \mathcal{G}^\ve_{0}(\beta,\theta) $ [resp. $ \dot{\mathcal{G}}^\ve_{0}(\beta,\theta) $ ] the completion of $ \mathcal{G}(\ve,\ve) $ [resp. $ \dot{\mathcal{G}}(\ve,\ve) $ ] in $ \mathcal{G}(\beta,\theta) $, and it is defined the norms
		$
		\norm{\cdot}_{\mathcal{G}^\ve_0(\beta,\theta)}:= \norm{\cdot}_{\mathcal{G}(\beta,\theta)}$, 
		$
		\norm{\cdot}_{\dot{\mathcal{G}}^\ve_0(\beta,\theta)}:= \norm{\cdot}_{\mathcal{G}(\beta,\theta)}.
		$\\
		\item[(v)] The spaces $ \mathcal{G}^\ve_{0}(\beta,\theta) $, $ \dot{\mathcal{G}}^\ve_{0}(\beta,\theta) $ are closed subspaces of $ \mathcal{G}(\beta,\theta) $.\\
	\end{enumerate}	
\end{remark}

The \emph{space of distributions} associated to $ \mathcal{G}^\ve_0(\beta,\theta) $  is denoted by $ (\mathcal{G}^\ve_0(\beta,\theta))^\ast $  equipped with the weak* topology. \\

An important class of distributions on $ (\mathcal{G}^\ve_0(\beta,\theta))^\ast $ for all $ \beta,\theta>0 $ is given by functions $f  \in L^{q}(X)$ for  $ q\in [1,\infty]$ associated to the functional 
$$ \Lambda_f(\varphi):=\int f\varphi d\mu, \quad \quad \forall \, {\varphi} \in {\mathcal{G}(\beta,\theta)}. $$ 
In fact, for every $ \varphi \in \mathcal{G}(\beta,\theta)=\mathcal{G}(x_0,1,\beta,\theta)$, by the size condition in Definition \ref{def:testfuncchina} and H\"older inequality for $1<q<\infty$, we have  
\begin{align*}
	\abs{\int f(y)\varphi(y)d\mu(y)}&\leq C \norm{f}_{L^q} \corc{\int \frac{1}{(V_1(x_0)+V(x_0,y))^{q'}}\, \frac{1}{(1+d(x_0,y))^{q'\theta}} d\mu(y)}^{\frac1{q^\prime}}\\
	& \leq C \norm{f}_{L^q} \corc{{V_1(x_0)^{-\frac{1}{q}}} } \corc{\int \frac{1}{V_1(x_0)+V(x_0,y)}\frac{1}{(1+d(x_0,y))^{q'\theta}} d\mu(y)}^{\frac1{q^\prime}}\\
	&\leq C_{q,\theta} \corc{{V_1(x_0)^{-\frac{1}{q}}} } \norm{f}_{L^q(X)},
\end{align*}
where the last inequality follows by Proposition \ref{prop:fv} (iii). The case $q=\infty$ holds analogously chosen  $q'=1$. For $ q=1 $, it is sufficient to remark that $|\varphi(y)|\leq C V_1(x_0)^{-1} $. 
Summarizing   
\begin{equation}\label{lq}
	\abs{\Lambda_f(\varphi)}\lesssim \norm{f}_{L^q}, \quad \quad \forall \,  \norm{\varphi}_{\mathcal{G}(\beta,\theta)} \leq 1.
\end{equation}

The next result is useful  to compare the spaces of test functions $ \mathcal{G}^\ve_0(\beta,\theta) $ and $ \ell_{\frac1p-1,1}(X) $ .

\begin{proposition}\label{prop:RTF}
	Let $ \beta\in(0,1] $, $ \theta\in (0,\infty)$. If $ \vp\in \mathcal{G}(\beta,\theta) $ then there exists a constant $ C>0 $ independent of  $ \vp $ such that
	\begin{equation}\label{eq:667}
		\abs{\vp(x)-\vp(y)}\leq C\norm{\vp}_{\mathcal{G}(\beta,\theta)} V(x,y)^{\frac\beta\gamma}, \quad \quad \forall \, x,y \in X.
	\end{equation}	
	Let $x_{B} \in X$ and $r_{B}>0$ fixed. Then 	
	\begin{equation}\label{6.2}
		\abs{\vp(x)-\vp(y)}\leq C\norm{\vp}_{\mathcal{G}(\beta,\theta)} \abs{B(x_B,r_B)}^{\frac\beta\gamma},  \quad \quad \forall \, x,y \in B(x_B,r_B). 
	\end{equation}
	Moreover, if $ r_B\geq 1 $ then 
	\begin{equation}\label{6.3}
		\abs{\vp(x)}\leq C \norm{\vp}_{\mathcal{G}(\beta,\theta)} \abs{ B(x_B,r_B)}^{\frac\theta\gamma},  \quad \quad \forall \, x \in B(x_B,r_B). 
	\end{equation}
	In particular, if $ \vp\in \mathcal{G}({\beta,\beta}) $ then we have that $ \vp\in \ell_{\frac\beta\gamma,1}(X)$ and 
	\begin{equation}\label{6.5}
		\norm{ \vp }_{\ell_{\frac\beta\gamma,1}}\leq C \norm{\vp}_{\mathcal{G}(\beta,\beta)}.
	\end{equation}
\end{proposition}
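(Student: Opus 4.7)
The plan is to prove the four inequalities in turn, using the size and regularity conditions defining $\mathcal{G}(\beta,\theta)$ together with the volume comparisons of Proposition \ref{prop:fv} and the upper dimension estimate \eqref{upper}.

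Fixing the base point $x_0$ so that $\mathcal{G}(\beta,\theta)=\mathcal{G}(x_0,1,\beta,\theta)$, I prove \eqref{eq:667} by splitting according to whether $y$ is close to $x$ relative to the scale $1+d(x_0,x)$. In the near case $d(x,y)\leq (2A_0)^{-1}(1+d(x_0,x))$, the regularity condition bounds $|\vp(x)-\vp(y)|$ by $\norm{\vp}_{\mathcal{G}(\beta,\theta)}[V_1(x_0)+V(x_0,x)]^{-1}[d(x,y)/(1+d(x_0,x))]^\beta[1+d(x_0,x)]^{-\theta}$. The crucial point is to convert $d(x,y)^\beta$ into $V(x,y)^{\beta/\gamma}$: since $d(x,y)\leq 1+d(x_0,x)$, the upper dimension \eqref{upper} applied at $x$ gives $V_{1+d(x_0,x)}(x)\leq A'[(1+d(x_0,x))/d(x,y)]^\gamma V(x,y)$, which rearranges to $[d(x,y)/(1+d(x_0,x))]^\beta \leq (A')^{\beta/\gamma}V(x,y)^{\beta/\gamma}V_{1+d(x_0,x)}(x)^{-\beta/\gamma}$. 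Combining this with the comparison $V_{1+d(x_0,x)}(x)\approx V_1(x_0)+V(x_0,x)$, obtained via quasi-triangle and Proposition \ref{prop:fv}\,(i), and the trivial bounds $V_1(x_0)+V(x_0,x)\geq V_1(x_0)$ and $(1+d(x_0,x))^\theta\geq 1$, the target inequality follows with constant depending only on $x_0$ and the structural constants. In the far case $d(x,y)>(2A_0)^{-1}(1+d(x_0,x))$, the size condition yields $|\vp(x)-\vp(y)|\leq 2\norm{\vp}_{\mathcal{G}(\beta,\theta)}/V_1(x_0)$; the case hypothesis gives $B(x_0,1)\subset B(x,2A_0^2\,d(x,y))$, whence \eqref{upper} produces $V_1(x_0)\leq CV(x,y)$ and the desired estimate follows.

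For \eqref{6.2}, the quasi-triangle inequality gives $d(x,y)\leq 2A_0 r_B$ whenever $x,y\in B(x_B,r_B)$, so $B(x,d(x,y))\subset B(x_B,A_0(1+2A_0)r_B)$ and doubling yields $V(x,y)\leq C|B(x_B,r_B)|$; combining with \eqref{eq:667} produces \eqref{6.2}. For \eqref{6.3}, the size condition together with $(1+d(x_0,x))^{-\theta}\leq 1$ and $[V_1(x_0)+V(x_0,x)]^{-1}\leq V_1(x_0)^{-1}$ gives $|\vp(x)|\leq \norm{\vp}_{\mathcal{G}(\beta,\theta)}/V_1(x_0)$, and since $r_B\geq 1$ the inclusion $B(x_0,1)\subset B(x_B,A_0(1+d(x_0,x_B)))$ together with doubling yields $V_1(x_0)\leq C|B(x_B,r_B)|$, producing \eqref{6.3}. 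Finally, \eqref{6.5} specializes to $\theta=\beta$ and $T=1$: for $r_B<1$, \eqref{6.2} controls $\mathfrak{N}_{\beta/\gamma,1}^B(\vp)$ uniformly in $B$; for $r_B\geq 1$, a refinement of the argument for \eqref{6.3}, splitting on whether $d(x_0,x_B)\leq 2A_0 r_B$ or not and using the decay of $\vp(x)$ coming from the size condition when $x_B$ is far from $x_0$, yields a uniform bound on $\norm{\vp}_{L^\infty(B)}/|B(x_B,r_B)|^{\beta/\gamma}$ by $C\norm{\vp}_{\mathcal{G}(\beta,\beta)}$.

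The main obstacle is the near case of \eqref{eq:667}: the regularity estimate must be rewritten so that the distance $d(x,y)$ becomes $V(x,y)^{1/\gamma}$. The conversion requires applying \eqref{upper} twice — once to produce the factor $V(x,y)$, and once to compare volumes at the base points $x_0$ and $x$ — while carefully tracking the compensating powers of $V_{1+d(x_0,x)}$ so that they combine cleanly with $[V_1(x_0)+V(x_0,x)]^{-1}$ and leave a constant depending only on $x_0$ and the structural constants $A_0, A', \gamma, \beta, \theta$.
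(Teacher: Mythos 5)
Your treatment of \eqref{eq:667} and \eqref{6.2} is correct and is essentially the paper's own argument: the same case split at $d(x,y)\lessgtr (2A_0)^{-1}(1+d(x_0,x))$, the same rearrangement of \eqref{upper} to convert $[d(x,y)/(1+d(x_0,x))]^\beta$ into $V(x,y)^{\beta/\gamma}$ times a volume ratio absorbed via Proposition \ref{prop:fv}\,(i), and the same inclusion argument in the far case.

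The proof of \eqref{6.3}, however, contains a genuine gap. You discard both decay factors from the size condition and reduce to $\abs{\vp(x)}\leq \norm{\vp}_{\mathcal{G}(\beta,\theta)}/V_1(x_0)$, then claim that $V_1(x_0)\leq C\abs{B(x_B,r_B)}$ follows from $B(x_0,1)\subset B(x_B,A_0(1+d(x_0,x_B)))$ ``together with doubling.'' Doubling runs the wrong way here: it bounds the large ball $B(x_B,A_0(1+d(x_0,x_B)))$ by a multiple of $B(x_B,r_B)$ only with a factor of order $[(1+d(x_0,x_B))/r_B]^{\gamma}$, so the resulting constant depends on $d(x_0,x_B)$ and blows up as $x_B\to\infty$ with $r_B$ fixed. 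Moreover the inequality $V_1(x_0)\leq C\abs{B(x_B,1)}$ uniformly in $x_B$ is simply false for general doubling measures: on $(\R,\abs{\cdot},(1+\abs{x})^{-1/2}dx)$ one has $\mu(B(x_B,1))\to 0$ as $\abs{x_B}\to\infty$. The correct argument (the one in the paper) splits on the position of the point $x$ itself: if $2A_0r_B\leq 1+d(x_0,x)$ you must retain the decay factor $(1+d(x_0,x))^{-\theta}$ and convert it, via \eqref{upper} applied with $\lambda=(1+d(x_0,x))/r_B\geq 1$ and $r_B\geq1$, into $\corc{\abs{B(x,r_B)}/\abs{B(x,1+d(x_0,x))}}^{\theta/\gamma}\lesssim \corc{\abs{B(x_B,r_B)}/V_1(x_0)}^{\theta/\gamma}$; only in the complementary case $1+d(x_0,x)\leq 2A_0r_B$ does the crude bound suffice, because there $B(x_0,1)\subset B(x,Cr_B)$ with $C$ absolute and a bounded number of doublings at scale $r_B$ gives $V_1(x_0)\lesssim\abs{B(x_B,r_B)}$. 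You in fact gesture at exactly this split in your discussion of \eqref{6.5} (``using the decay of $\vp(x)$ \ldots when $x_B$ is far from $x_0$''), which is inconsistent with presenting the crude argument as a proof of \eqref{6.3}: the $r_B\geq 1$ half of \eqref{6.5} \emph{is} \eqref{6.3} with $\theta=\beta$, so whatever refinement \eqref{6.5} needs, \eqref{6.3} needs too, and that refinement is the actual content of the estimate; it needs to be carried out, not merely invoked.
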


\begin{proof} 
	The proof follows the steps from \cite[Lemma 4.15]{HeHanLiLYY19}. We start by showing \eqref{eq:667}. Let $ \vp\in \mathcal{G}(\beta,\theta) = \mathcal{G}(x_0,1,\beta,\theta) $ for some $ x_0\in X $. For any  $ x,y\in X $, we suppose first 	
	that $ d(x,y) \leq (2A_0)^{-1}[1+d(x_0,x)]$. Then, by the regularity condition 
	we have 
	\begin{align}
		\abs{\vp(x)-\vp(y)}&\leq \norm{\vp}_{\mathcal{G}(\beta,\theta)} \corc{ \frac{d(x,y)}{1+d(x_0,x)}}^\beta  \frac{1}{V_1(x_0)+V(x_0,x)} \corc{ \frac{1}{1+d(x_0,x)}}^\theta \nonumber \\
		&\leq \norm{\vp}_{\mathcal{G}(\beta,\theta)} \frac{1}{V_1(x_0)} \corc{ \pare{\frac{1+d(x_0,x)}{d(x,y)}}^{-\gamma} }^{\frac\beta\gamma}  \label{eq:part1}
	\end{align}
	In order to continue the previous estimate, note that since $\displaystyle{ \frac{1+d(x_0,x)}{d(x,y)}\geq 2A_0>1 }$, we may write
	\[
	\abs{ B\pare{x, {1+d(x_0,x)} } }=\abs{ B\pare{x, d(x,y) \frac{1+d(x_0,x)}{d(x,y)} } }\leq A'\pare{\frac{1+d(x_0,x)}{d(x,y)}}^{\gamma} \abs{B(x,d(x,y))}
	\]
	and then
	$$
	\pare{\frac{1+d(x_0,x)}{d(x,y)}}^{-\gamma} \leq A' \, \frac{|B(x,d(x,y))|}{|B(x,1+d(x_0,x))|} \leq C \, A' \, \frac{|B(x,d(x,y))|}{V_1(x_0)+V(x_0,x)},
	$$
	where the last inequality follows from Proposition \ref{prop:fv} (i) and the constant $C>0$ is independent of $x_{0},x,y$.
	Combining the previous estimates in \eqref{eq:part1} we obtain
	\begin{align*}
		\abs{\vp(x)-\vp(y)}
		&\leq \norm{\vp}_{\mathcal{G}(\beta,\theta)} \frac{(CA^\prime)^{\beta/\gamma}}{V_1(x_0)}  \left(\frac{\abs{ B(x,d(x,y))}}{\corc{V_1(x_0)+ V(x_0,x)} }
		\right)^{\frac\beta\gamma} \\
		&\leq \norm{\vp}_{\mathcal{G}(\beta,\theta)} \left( \frac{(CA^\prime)^{\beta/\gamma}}{V_1(x_0)^{1+\frac\beta\gamma}} \right) 
		\abs{ B(x,d(x,y))}^{\frac\beta\gamma} \\
		&= \norm{\vp}_{\mathcal{G}(\beta,\theta)} \left( \frac{(CA^\prime)^{\beta/\gamma}}{V_1(x_0)^{1+\frac\beta\gamma}} \right) 
		V(x,y)^{\frac\beta\gamma}.
	\end{align*}
	On the other hand, if $d(x,y) > (2A_0)^{-1}[1+d(x_0,x)]$ we first note that
	$$ 
	\abs{B(x,1+d(x,x_0))}\leq  \abs{B(x,(2A_{0})d(x,y))}\leq A^\prime (2A_0)^\gamma \abs{B(x,d(x,y))} = A^\prime (2A_0)^\gamma \, V(x,y)
	$$
	and then
	$$
	V_1(x_0) = |B(x_0,1)| \leq |B(x,1+d(x,y))| \leq A^\prime (2A_0)^\gamma \, V(x,y).
	$$
	Under the size condition, the previous estimate and again Proposition \ref{prop:fv} (i) we conclude 
	\begin{align*}
		\abs{\vp(x)-\vp(y)}
		&\leq \norm{\vp}_{\mathcal{G}(\beta,\theta)}\pare{   \frac{(1+d(x_0,x))^{-\theta}}{V_1(x_0)+V(x_0,x)} + \frac{(1+d(x_0,y))^{-\theta}}{V_1(x_0)+V(x_0,y)} } \\
		&\leq 2\norm{\vp}_{\mathcal{G}(\beta,\theta)} \frac{1}{V_1(x_0)} =  2\norm{\vp}_{\mathcal{G}(\beta,\theta)} \frac{1}{V_1(x_0)^{1+\frac\beta\gamma}} V_1(x_0)^{\frac\beta\gamma} \\
		&\leq \norm{\vp}_{\mathcal{G}(\beta,\theta)} \frac{2\corc{A^\prime (2A_0)^\gamma}^{\frac\beta\gamma} }{V_1(x_0)^{1+\frac\beta\gamma}} V(x,y)^{\frac\beta\gamma}.
	\end{align*}
	Summing up the two cases, we have shown that there exists $ C>0 $ independent of $ \vp $, such that \eqref{eq:667} holds.
	
	The estimate \eqref{6.2} is a particular case of the previous one. In fact, if $ x,y\in B(x_B,r_B) $, applying the quasi-triangular inequality we can show $ B(x,d(x,y))\subset B(x_B, A_0(2A_0+1)r_B) $, which implies $V(x,y) \leq |B(x_B, A_0(2A_0+1)r_B)|$. Then, from \eqref{eq:667}
	\begin{align*}\label{eq:668}
		\abs{\vp(x)-\vp(y)} &\leq C\norm{\vp}_{\mathcal{G}(\beta,\theta)} V(x,y)^{\frac{\beta}{\gamma}} \\
		&\leq C \, [A_0(2A_0+1)]^{\gamma} \, \norm{\vp}_{\mathcal{G}(\beta,\theta)} |B(x_B,r_B)|^{\frac{\beta}{\gamma}}.
	\end{align*}
	
	Now we move on to prove \eqref{6.3}.  
	Let $ x\in B(x_B,r_B) $ and assume first that $ r_B\leq (2A_0)^{-1}[1+d(x,x_0)] $. Noticing that $r_{B}\geq 1$ and proceeding as before, we may estimate   
	\begin{align*}
		\abs{\vp(x)}\leq \norm{\vp}_{\mathcal{G}(\beta,\theta)}   \frac{1}{V_1(x_0)+V(x_0,x)} \corc{ \frac{1}{1+d(x_0,x)}}^\theta
		& \leq \norm{\vp}_{\mathcal{G}(\beta,\theta)} \frac{1}{V_1(x_0)}
		\corc{ \pare{\frac{1+d(x_0,x)}{r_B}}^{-\gamma}}^{\frac\theta\gamma} \\
		&{\lesssim} \norm{\vp}_{\mathcal{G}(\beta,\theta)} \frac{(A^\prime)^{\frac\theta\gamma}}{V_1(x_0)}\corc{ \frac{\abs{B(x,r_B)}}{\abs{B(x,{1+d(x_0,x)})}} }^{\frac\theta\gamma} \\
		& \lesssim \norm{\vp}_{\mathcal{G}(\beta,\theta)} \frac{(A^\prime)^{\frac\theta\gamma}}{V_1(x_0)^{1+\frac\theta\gamma}} {\abs{B(x,r_B)}}^{\frac\theta\gamma}\\
		& \lesssim \norm{\vp}_{\mathcal{G}(\beta,\theta)} \frac{(A^\prime)^{2\frac\theta\gamma} (2A_0)^\theta  }{V_1(x_0)^{1+\frac\theta\gamma}} {\abs{B(x_B,r_B)}}^{\frac\theta\gamma}.
	\end{align*}
	Suppose now that $ (2A_0)^{-1}(1+d(x_0,x))\leq r_B $. Again, from size condition of $ \vp $, the Proposition \ref{prop:fv} item (i) and the inclusion $ B(x,r_B)\subset B(x_B,2A_0r_B) $, we have
	\begin{align*}
		\abs{\vp(x)} \leq \norm{\vp}_{\mathcal{G}(\beta,\theta)} \frac{1}{V_1(x_0)}&= \norm{\vp}_{\mathcal{G}(\beta,\theta)} \frac{1}{[V_1(x_0)]^{1+\frac\theta\gamma}} [V_1(x_0)]^{\frac\theta\gamma} \\
		& \leq \norm{\vp}_{\mathcal{G}(\beta,\theta)} \frac{1}{[V_1(x_0)]^{1+\frac\theta\gamma}} \abs{B(x_0,1+d(x_0,x))}^{\frac\theta\gamma} \\
		& \leq \norm{\vp}_{\mathcal{G}(\beta,\theta)} \frac{1}{[V_1(x_0)]^{1+\frac\theta\gamma}} \abs{B(x,2A_0r_B)}^{\frac\theta\gamma}\\
		&\leq \norm{\vp}_{\mathcal{G}(\beta,\theta)} \frac{(A^{\prime})^{\frac\theta\gamma}(2A_0)^\theta}{[V_1(x_0)]^{1+\frac\theta\gamma}} \abs{B(x,r_B)}^{\frac\theta\gamma} \\
		& \leq \norm{\vp}_{\mathcal{G}(\beta,\theta)} \frac{(A^{\prime})^{\frac\theta\gamma}(2A_0)^\theta}{[V_1(x_0)]^{1+\frac\theta\gamma}} \abs{B(x_B,2A_0r_B)}^{\frac\theta\gamma} \\
		&\leq \norm{\vp}_{\mathcal{G}(\beta,\theta)} \frac{(A^{\prime})^{2\frac\theta\gamma}(2A_0)^{2\theta}}{[V_1(x_0)]^{1+\frac\theta\gamma}} \abs{B(x_B,r_B)}^{\frac\theta\gamma}
	\end{align*}
	Combining the both inequalities we have the inequality \eqref{6.3} desired. 
	
	The inclusion \eqref{6.5} follows directly from \eqref{6.2} and \eqref{6.3} taking $\theta=\beta$.   
\end{proof}

\begin{remark}
	In order to prove \eqref{eq:667}, it is sufficient to assume that
	\[
	\abs{\varphi(x)}\leq \frac{C}{V_r(x_{0})},\quad \forall x \in X 
	\] 
	and for any $ x,y\in X $ satisfying $ d(x,y)\leq (2A_0)^{-1}(r+d(x_0,x)) $,
	\[
	\abs{\varphi(x)-\varphi(y)}\leq  \frac{C}{V_r(x_0)}\left[ \frac{d(x,y)}{r+d(x_0,x)}\right]^\beta. 
	\] \\
\end{remark}

In what follows, we will always denote by $ \eta>0 $ the Hölder regularity index of wavelets given in \cite[Theorem 7.1]{AusHyto13}. For any $ \theta,\beta\in (\gamma(\frac1p-1),\eta)$, from Remark \ref{r6.2} (iii) and the relation \eqref{6.5} we get
\begin{equation}
	\mathcal{G}(\beta,\theta) \subset \mathcal{G}\left(\gamma(\tfrac1p-1),\gamma(\tfrac1p-1)\right)\subset \ell_{1/p-1,1}(X).
\end{equation}
Moreover, denoting by $\mathcal{G}_0^\eta(\beta,\theta)$ the closure of $\mathcal{G}(\eta,\eta)$  in $\mathcal{G}_0^\eta(\beta,\theta)$, i.e.
$\mathcal{G}_0^\eta(\beta,\theta):=\ovl{\mathcal{G}(\eta,\eta)}^{\mathcal{G}(\beta,\theta)}$, we have
\begin{equation}\label{eq:684}
	{\mathcal{G}_0^\eta(\beta,\theta) \subset  \ell_{1/p-1,1}(X)},
\end{equation}
continuously due to Remark \ref{r6.2} (iii) and \eqref{6.5}.

	\begin{remark}\label{rem:hpcwdistinG}
		Note that \eqref{eq:684} shows that elements in $ \ell_{1/p-1,1}^\ast(X) $ define elements in $ (\mathcal{G}_0^\eta(\beta,\theta))^\ast $. In particular, elements in $ h^p_\#(X) $ (thus elements in $ h^p_{cw}(X)$ also) define elements in $ (\mathcal{G}_0^\eta(\beta,\theta))^\ast $.
	\end{remark}

\begin{definition}
	Fix $ \beta, \theta\in (0,\eta) $ and $ f\in (\mathcal{G}^\eta_0(\beta,\theta))^\ast $. The \emph{local grand maximal function} of $ f $ is defined as
	\[
	f^*_0(x):=\sup \lla{ \abs{\ip{f,\varphi}} \ : \  \varphi\in \mathcal{G}^\eta_0(\beta,\theta) ,\ \norm{\varphi}_{\mathcal{G}(x,r,\beta,\theta)}\leq 1, \text{ for some } r\in (0,1] }.
	\]
\end{definition}

In the next lemma we show a convenient estimate of the local grand maximal function of atoms-type functions. It will be useful later to show that approximate atoms are uniformly bounded in norm.

\begin{lemma}\label{lem:DA}
	Let $ p\in(\frac{\gamma}{\gamma+\eta},1] $, $ q\in(p,\infty]\cap [1,\infty] $.
	Then, there exists a constant $ C>0 $ such that, 
	if $ a $ is a measurable function supported in a ball $ B=B(x_B,r_B) $ such that $ \norm{a}_{L^q}\leq \abs{B}^{\frac1q-\frac1p} $, for any $ x\in X $
	\begin{align}
		a_0^*(x)\leq C \mathcal{M}a(x)\mathds{1}_{B^{*}}(x)&+ C \mathds{1}_{X \backslash B^{*}}(x) 
		\left( \corc{ \frac{r_B}{d(x_B,x)} }^{\beta \land \theta} \frac{ |B|^{1-1/p}}{V(x_{B},x)} \right. \label{eq:decompatom}\\
		&\quad +\left.  \abs{\int ad\mu}   \frac{1}{V(x,x_B)}\left[ \frac{1}{1+  d(x,x_B)} \right]^\theta  \right) \nn 
	\end{align}
	when $ r_B \leq 1$, and
	\begin{align}\label{eq:decompatom2}
		a_0^*(x)\leq C \mathcal{M}a(x)\mathds{1}_{B^{\ast}}(x)+ C \mathds{1}_{X \backslash B^{*}}(x)\pare{\corc{ \frac{r_B}{d(x_B,x)} }^{\beta \land \theta} \frac{ |B|^{1-1/p}}{V(x_B,x)} }
	\end{align}
	when $ r_B> 1 $, where $B^{*}:=B(x_B,2A_0r_B)$ and $ \beta \land \theta $ denotes $ \min\lla{\beta,\theta} $. 
\end{lemma}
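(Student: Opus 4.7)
The plan is to bound $|\langle a,\varphi\rangle|$ for an arbitrary admissible $\varphi \in \mathcal{G}_0^\eta(\beta,\theta)$ with $\|\varphi\|_{\mathcal{G}(x,r,\beta,\theta)} \leq 1$ and some $r\in(0,1]$, and then take the supremum to recover $a_0^*(x)$. The argument splits into two regimes according to whether $x\in B^*$ or $x\notin B^*$. The second regime is where the interesting work happens, since the quasi-triangle inequality forces $d(x,x_B) \sim d(x,y)$ and hence $V(x,x_B) \sim V(x,y)$ for every $y\in B$, allowing the regularity of $\varphi$ to produce genuine decay.

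For $x \in B^*$, applying only the size condition of $\varphi$ and integrating against $a$,
\[
|\langle a,\varphi\rangle| \leq \int |a(y)|\,\frac{1}{V_r(x)+V(x,y)}\left[\frac{r}{r+d(x,y)}\right]^\theta d\mu(y) \leq C\,\mathcal{M}a(x)
\]
by Proposition \ref{prop:fv}(iv). This produces the $\mathcal{M}a(x)\mathds{1}_{B^*}$ contribution in both \eqref{eq:decompatom} and \eqref{eq:decompatom2}.

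For $x \notin B^*$, the decomposition
\[
\langle a,\varphi\rangle = \int a(y)\bigl[\varphi(y)-\varphi(x_B)\bigr]\,d\mu(y) + \varphi(x_B)\int a\,d\mu
\]
is the key. Since $d(x,x_B) \geq 2A_0 r_B$, any $y \in B$ satisfies $d(y,x_B) \leq r_B \leq (2A_0)^{-1}(r+d(x,x_B))$, so the regularity of $\varphi$ applies and yields
\[
|\varphi(y)-\varphi(x_B)| \leq \frac{r_B^\beta\, r^\theta}{(r+d(x,x_B))^{\beta+\theta}}\cdot\frac{1}{V_r(x)+V(x,x_B)}.
\]
Combined with the H\"older-type bound $\|a\|_{L^1}\leq \|a\|_{L^q}|B|^{1/q'}\leq |B|^{1-1/p}$ and the observation that $r_B/d(x,x_B) \leq (2A_0)^{-1}<1$ (so $(r_B/d(x,x_B))^\beta \leq (r_B/d(x,x_B))^{\beta\land\theta}$), the first piece is controlled by a constant multiple of $\bigl(r_B/d(x_B,x)\bigr)^{\beta\land\theta}|B|^{1-1/p}/V(x_B,x)$, which, up to the equivalence $V(x,x_B)\sim V(x_B,x)$ from Proposition \ref{prop:fv}(i), is precisely the non-moment contribution appearing in both stated bounds.

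The dichotomy between $r_B \leq 1$ and $r_B > 1$ emerges only when estimating $|\varphi(x_B)|\cdot\bigl|\int a\,d\mu\bigr|$, and here the hypothesis $r \leq 1$ is decisive. When $r_B \leq 1$, the monotonicity of $t\mapsto t/(t+d)$ combined with $r\leq 1$ gives $r/(r+d(x,x_B))\leq 1/(1+d(x,x_B))$, producing the last term in \eqref{eq:decompatom} with $|\int a\,d\mu|$ kept explicit (we do not replace it by $|B|^{1-1/p}$, since the moment condition may make it strictly smaller). When $r_B > 1$, the same monotonicity with $r\leq 1<r_B$ gives $r/(r+d(x,x_B))\leq r_B/d(x,x_B)$, and since no moment cancellation is available the crude bound $|\int a|\leq |B|^{1-1/p}$ suffices, so this piece absorbs into the single term of \eqref{eq:decompatom2} after using $\bigl(r_B/d(x_B,x)\bigr)^\theta \leq \bigl(r_B/d(x_B,x)\bigr)^{\beta\land\theta}$. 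Taking the supremum over $\varphi$ completes both cases. No deep obstacle is expected beyond the careful bookkeeping of the constant $2A_0$ in the regularity hypothesis, the exchange of $r$ with $1$ or $r_B$ in denominators (which is precisely what makes the \emph{local} grand maximal function genuinely local), and the doubling equivalences that identify $V(x,y)$ with $V(x,x_B)$.
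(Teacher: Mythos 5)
Your proposal is correct and follows essentially the same route as the paper: size condition plus Proposition \ref{prop:fv}(iv) on $B^{*}$, and off $B^{*}$ the splitting $\int a(\varphi-\varphi(x_B))\,d\mu+\varphi(x_B)\int a\,d\mu$ with the regularity and size conditions, using $r\le 1$ to trade $r/(r+d(x,x_B))$ for $1/(1+d(x,x_B))$. The only difference is that you spell out the $r_B>1$ case, which the paper omits by citing \cite[Lemma 4.2]{HeYangWen21}, and your argument for it is fine.
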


\begin{proof}
	The proof of \eqref{eq:decompatom2}  follows the same steps of  \cite[Lemma 4.2]{HeYangWen21} and it will be omitted. Assume $r_{B} \leq 1$ and let $ \vp\in \mathcal{G}_0^\eta(\beta,\theta)=\mathcal{G}_0^\eta(x,r,\beta,\theta) $ such that $ \norm{\vp}_{\mathcal{G}(x,r,\beta,\theta)}\leq 1 $ for some $ r\in (0,1] $.
	From the size condition on $\varphi$ and
	Proposition \ref{prop:fv} \eqref{eq:fv4}, we may estimate
	\begin{align*}
		\abs{\ip{ a,\varphi }}=\abs{\int_X a(y)\vp(y)d\mu(y) }&\leq \int_X \abs{a(y)}\abs{\vp(y)}d\mu(y) \\
		& \leq  \int_X \abs{a(y)}\frac{1}{V_r(x)+V(x,y)} \corc{ \frac{r}{r+d(x,y)} }^\theta d\mu(y)\\
		& \leq C \mathcal{M}a(x). 
	\end{align*}
	Then, from the arbitrariness of $ \vp $, for $x \in B^{\ast}$ we have
	\begin{equation}\label{eq:661A}
		a_0^*(x)\lesssim  \mathcal{M}a(x). 
	\end{equation}
	Consider now $ {x\in X \backslash B^{*}}$, i.e., $ d(x,x_{B}) \geq (2A_0)r_B $. Note that if $ y\in B $ we obtain
	\begin{equation*}\label{eq:p1}
		d(x_B,y)< r_B\leq (2A_0)^{-1}d(x,x_B)\leq (2A_0)^{-1}(r+d(x,x_B)) .
	\end{equation*}
	So, using the regularity and size conditions of $\varphi$, we have that
	\begin{align*}
		\abs{\ip{ a,\varphi }}&=\abs{\int_{B} a(y)\vp(y)d\mu(y) } = \abs{ \int_{B} a(y)[\vp(y)-\vp(x_B)]d\mu(y) } + \abs{\vp(x_B)} \abs{\int ad\mu} \\
		&\leq \int_B \abs{a(y)}\abs{\vp(x_B)-\vp(y)}d\mu(y)   
		+ \abs{\int ad\mu} \, \frac{1}{V_r(x)+V(x,x_B)}\left[ \frac{r}{r+d(x,x_B)} \right]^\theta \\
		&\leq \int_B \abs{a(y)} \frac{1 }{V_r(x)+V(x,x_B)} \corc{ \frac{d(x_B,y)}{r+d(x,x_{B})} }^\beta \corc{ \frac{r}{r+d(x,x_B)} }^\theta d\mu(y) \\
		& \qquad \quad + \abs{\int ad\mu}  \,  \frac{1}{V(x,x_B)}\left[ \frac{1}{1 + d(x,x_B)} \right]^\theta 
		\\
		& {\leq} 
		\frac{1}{V(x,x_B)} \corc{\frac{r_B}{d(x,x_B)}}^\beta \int_B \abs{a(y)}d\mu(y) 
		+ \abs{\int ad\mu}   \frac{1}{V(x,x_B)}\left[ \frac{1}{1+  d(x,x_B)} \right]^\theta \\
		& {\leq} 
		\frac{\abs{B}^{1-1/p}}{V(x,x_B)} \corc{\frac{r_B}{d(x,x_B)}}^{\beta \land \theta } + \abs{\int ad\mu} \frac{1}{V(x,x_B)}\left[ \frac{1}{1+ d(x,x_B)} \right]^\theta 
	\end{align*}
	where in the third line we use the fact that $ r\leq 1 $ and in the last one follows since $\frac{r_B}{d(x,x_B)}\leq \frac{1}{2A_0}<1 $.	Then, from the arbitrariness of $ \vp $, we obtain
	\[ a_0^*(x)\lesssim \frac{\abs{B}^{1-1/p}}{V(x,x_B)} \corc{\frac{r_B}{d(x,x_B)}}^{\beta \land \theta } +  \abs{\int ad\mu}   \frac{1}{V(x,x_B)}\left[ \frac{1}{1+ d(x,x_B)} \right]^\theta 
	\] 
	when $ x\in X\setminus B^* $. So, from this last inequality and from \eqref{eq:661A} we obtain \eqref{eq:decompatom}.
\end{proof}

Next we present a definition of local Hardy space on $(X,d,\mu)$ in the sense of Coifman \& Weiss in terms of the grand local maximal function due to \cite[pp. 909]{HeYangWen21} denoted by $h^{\ast,p}(X)$ that will be simplified as $h^{p}(X)$ in this work.

\begin{definition}
	Let $ p\in (0,\infty] $. The local Hardy space $h^{p}(X)$ is defined as
	\[
	h^{p}(X)= \lla{ f\in ( \mathcal{G}^\eta_0(\beta,\theta) )^\ast \ : \ \norm{f}_{ h^{p}(X)}:= \norm{ f^*_0 }_{L^p} <\infty }.  
	\]
\end{definition}

The authors showed that $ h^{p}(X) $ is complete metric space for any $p\in (0,\infty]$, $ h^{p}(X)=L^p(X)$ if $p>1$ and that each local $(p,q)$-atom belongs to $ h^{p}(X)$ with uniformly bounded norm \cite[Lemma 4.2]{HeYangWen21}. Moreover, they proved that such spaces have an atomic decomposition theorem in terms of local $(p,\infty)$-atoms that we state below.

\begin{proposition}[Proposition 4.12 in \cite{HeYangWen21}]
	Let $ p\in (\frac{\gamma}{\gamma+\eta},1] $ and $ \beta, \theta \in (\gamma(1/p-1),\eta)$. For each $ f\in h^{p}(X) $, there exist a constant $C>0$,  a sequence of local $ (p,\infty) $-atoms $ \lla{a_j}_{j=1}^\infty $ and $ \lla{\lambda_j}_{j=1}^\infty\subset \C $ such that 
	$$ f=\sum_{j=1}^\infty \lambda_j a_j  \quad \text{in} \quad  (\mathcal{G}_0^\eta(\beta,\theta))^\ast$$ and 
	$ \sum_{j=1}^\infty \abs{\lambda_j}^p \leq C\norm{f_0^*}_{L^p}^p $ .
\end{proposition}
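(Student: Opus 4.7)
The approach I would take is a Calder\'on-Zygmund-type decomposition at dyadic levels of the local grand maximal function, adapted to spaces of homogeneous type (in the spirit of Mac\'ias-Segovia and the wavelet/Whitney technology of \cite{AusHyto13,HeHanLiLYY19}), but modified at scale $1$ to produce the \emph{local} version of atoms (no moment needed on balls of radius $\geq 1$).

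\textbf{Step 1 (Level sets and Whitney-type decomposition).} For each $k \in \Z$, set $\Omega_k := \{x \in X : f_0^*(x) > 2^k\}$. Using that $\varphi \mapsto \ip{f,\varphi}$ is continuous on $\mathcal{G}^\eta_0(\beta,\theta)$ together with translation of test functions, one checks that $f_0^*$ is lower semicontinuous, so each $\Omega_k$ is open, and $\abs{\Omega_k}<\infty$ since $f_0^*\in L^p$. Apply a Whitney-type decomposition of $\Omega_k$ (via Hyt\"onen--Kairema dyadic cubes, or the Coifman--Weiss construction) to obtain balls $B_{k,j}=B(x_{k,j},r_{k,j})$ with $r_{k,j}\approx \dist(x_{k,j},\Omega_k^c)$, bounded overlap of their fixed dilates, and $\bigcup_j B_{k,j}=\Omega_k$.

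\textbf{Step 2 (Partition of unity and good/bad decomposition).} Take a partition of unity $\{\eta_{k,j}\}$ subordinate to the covering, each $\eta_{k,j}$ supported in a fixed dilate of $B_{k,j}$, $\sum_j \eta_{k,j}=\mathds{1}_{\Omega_k}$, bounded by $1$, and $\eta$-H\"older with constant $r_{k,j}^{-\eta}$. Set
\[
P_{k,j}:=\begin{cases} \dfrac{\ip{f,\eta_{k,j}}}{\int \eta_{k,j}\,d\mu}, & r_{k,j}<1,\\[4pt] 0, & r_{k,j}\geq 1,\end{cases}\qquad b_{k,j}:=(f-P_{k,j})\eta_{k,j},\qquad g_k:=f-\sum_j b_{k,j}.
\]
By construction $\int b_{k,j}\,d\mu=0$ whenever $r_{k,j}<1$, which is exactly the local vanishing-moment condition.

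\textbf{Step 3 (Pointwise bounds and telescoping).} The central estimate is $|g_k(x)|\lesssim 2^k$ a.e.\ and $|b_{k,j}(x)|\lesssim 2^k$ on a dilate of $B_{k,j}$. This follows by testing $f$ against normalized bumps in $\mathcal{G}(x,r,\beta,\theta)$ with $r\leq 1$ (which is allowed in the definition of $f_0^*$), using $f_0^*(y)\leq 2^k$ off $\Omega_k$, together with the H\"older regularity of $\eta_{k,j}$ and Proposition \ref{prop:fv}. Write
\[
f=\sum_{k\in\Z}(g_{k+1}-g_k)\qquad \text{in }(\mathcal{G}^\eta_0(\beta,\theta))^*,
\]
with $g_k\to 0$ as $k\to-\infty$ and $g_k\to f$ as $k\to+\infty$ tested against $\varphi\in\mathcal{G}(\eta,\eta)$, using the decay of $\varphi$ and the integrability of $f_0^*$.

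\textbf{Step 4 (Atomic identification and $\ell^p$ bound).} Regroup $g_{k+1}-g_k$ into pieces supported in individual $B_{k,j}$ (after a standard re-balancing that re-absorbs the $P_{k,j}$ corrections into neighbouring balls from level $k+1$). After subtracting a constant when $r_{k,j}<1$ and normalizing by $\|\cdot\|_{L^\infty}\leq C\cdot 2^k$, we obtain local $(p,\infty)$-atoms $a_{k,j}$ supported in fixed dilates of $B_{k,j}$, with coefficients $\lambda_{k,j}\approx 2^k |B_{k,j}|^{1/p}$. Bounded overlap gives $\sum_j |B_{k,j}|\lesssim |\Omega_k|$, so by the layer-cake formula
\[
\sum_{k,j}|\lambda_{k,j}|^p\lesssim \sum_{k\in\Z} 2^{kp}|\Omega_k|\,\lesssim\, \int_0^\infty \lambda^{p-1}|\{f_0^*>\lambda\}|\,d\lambda\,\approx\,\|f_0^*\|_{L^p}^p.
\]

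\textbf{Main obstacle.} The delicate point is Step 3: the pointwise $L^\infty$-control of $g_k$ by $2^k$ requires, at each $x$, constructing a test function $\varphi_x\in\mathcal{G}^\eta_0(\beta,\theta)$ with $\|\varphi_x\|_{\mathcal{G}(x,r,\beta,\theta)}\lesssim 1$ for some $r\leq 1$ that reproduces $g_k(x)$ up to a controlled error. The interplay between the scale $1$ threshold in the definition of $f_0^*$ and the Whitney balls with $r_{k,j}\geq 1$ is what forces the atoms on large balls to drop the cancellation condition; conversely, on small balls the moment condition must be imposed by hand via $P_{k,j}$, and one must verify that $|P_{k,j}|\lesssim 2^k$ by carefully choosing a normalized bump supported on $B_{k,j}$ that lies in $\mathcal{G}^\eta_0(\beta,\theta)$ — this is where the assumption $p>\gamma/(\gamma+\eta)$ (equivalently $\gamma(1/p-1)<\eta$) enters, ensuring the ranges of $\beta,\theta$ are compatible with the atoms being tested against.
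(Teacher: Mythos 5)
The paper does not actually prove this statement: it is quoted verbatim as Proposition 4.12 of \cite{HeYangWen21}, so there is no internal proof to compare against, only the citation. Your sketch follows the same route as that reference (a Calder\'on--Zygmund decomposition at dyadic levels of the local grand maximal function, Whitney covers with a subordinate partition of unity, and the scale-$1$ modification that inserts the correction $P_{k,j}$ only on balls of radius $<1$), so the approach is the expected one; the points you flag as delicate --- the $L^\infty$ bound on $g_k$, the re-balancing of the $P_{k,j}$ corrections in the telescoping sum, and the mismatch that a level-$(k+1)$ Whitney ball meeting $B_{k,j}$ with $r_{k,j}<1$ need not itself have radius $<1$ --- are precisely the technical content carried out in Section 4 of \cite{HeYangWen21}.
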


A direct consequence is that $h^{p}_{cw}(X)$ and $h^{p}(X)$ coincide in $(\mathcal{G}_0^\eta(\beta,\theta))^\ast$ with comparable norms (see \cite[Theorem 4.13]{HeYangWen21}).

The main contribution of this section is to show that, under natural restrictions, the spaces $ h^{p}_{\#}(X)$ and  $h^{p}(X) $ coincide with equivalent norms. We start showing that any $(p,q,1) $-approximate atom belongs to $ h^{p}(X)$ with uniform norm control.

\begin{proposition}\label{teo:670}
	Let $ p\in(\frac{\gamma}{\gamma+\eta},1) $, $ q\in (p,\infty]\cap [1,\infty] $ and a $ (p,q,1) $-approximate atom $ a $ supported on $ B=B(x_B,r_B)\subset X $. Then, there exists a constant $ C>0 $, which does not depend on $ a $ (but it can depend on $ p, A_0,A^\prime,\gamma,\theta,\beta $ ), such that
	\begin{align}\label{eq:unifboundatom}
		\norm{ a_0^* }_{L^p}\leq C , 
	\end{align}
with $ \beta,\theta\in (\gamma(\frac{1}{p}-1),\eta) $.
\end{proposition}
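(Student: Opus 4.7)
The approach is to apply Lemma~\ref{lem:DA}, which bounds $a_0^*$ pointwise, and then integrate its $p$-th power. The argument naturally splits according to whether $r_B > 1$ (using \eqref{eq:decompatom2}) or $r_B \leq 1$ (using \eqref{eq:decompatom}, which carries an extra approximate-moment term). In both cases, the Hardy--Littlewood maximal contribution on $B^* = B(x_B, 2A_0 r_B)$ is controlled by H\"older's inequality (with exponent $q/p \geq 1$ when $q<\infty$) and the $L^q$-boundedness of $\mathcal{M}$, giving
\[
\int_{B^*}(\mathcal{M}a)^p \, d\mu \leq |B^*|^{1-p/q}\,\|a\|_{L^q}^p \lesssim |B|^{1-p/q}\,|B|^{p/q-1} = 1,
\]
while the case $q=\infty$ is immediate from $\|\mathcal{M}a\|_{L^\infty} \leq \|a\|_{L^\infty} \leq |B|^{-1/p}$.

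Next, the polynomial-decay term $(r_B/d)^{\beta \wedge \theta}\,|B|^{1-1/p}/V(x_B,x)$ on $X \setminus B^*$ is handled via the annular decomposition $A_k = \{2^k r_B \leq d(x_B,x) < 2^{k+1}r_B\}$, $k \geq 1$. Using the doubling estimates $|A_k| \leq A'\,|B(x_B, 2^k r_B)|$ and $V(x_B, x) \geq |B(x_B, 2^k r_B)|$ together with the upper-dimension bound $|B(x_B, 2^k r_B)| \leq A'\,2^{k\gamma}|B|$, the contribution on $A_k$ is of order $2^{k[\gamma(1-p) - p(\beta \wedge \theta)]}$, which sums to a constant precisely because $\beta \wedge \theta > \gamma(1/p - 1)$. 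For $r_B \leq 1$, one must additionally bound the approximate-moment term $|\!\int a\,d\mu| \cdot V(x_B,x)^{-1}(1+d(x_B,x))^{-\theta}\mathds{1}_{X\setminus B^*}$, using the bound $|\!\int a\,d\mu| \leq V_1^{1-1/p}$ with $V_1 := |B(x_B, 1)|$. Splitting $X\setminus B^*$ into $\{d > 1\}$ and $\{2A_0 r_B \leq d \leq 1\}$, on the outer region, annular decomposition at scales $2^k$ together with the decay $(1+d)^{-p\theta}$ yields a convergent geometric series again under $\theta > \gamma(1/p - 1)$; on the inner region, where $(1+d)^{-\theta}$ is no longer effective, one exploits both the smallness of $V_1^{1-1/p}$ compared to $|B|^{1-1/p}$ (since $V_1 \geq |B|$ and $1-1/p < 0$) and the comparability $V_1(x) + V(x,x_B) \sim V_1$ from Proposition~\ref{prop:fv}(i) on $\{d(x,x_B)\leq 1\}$, to extract a uniform bound.

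The principal obstacle lies in the inner-region contribution of the moment term when $r_B \leq 1$: a na\"ive application of the estimate in Lemma~\ref{lem:DA} produces an annular sum that grows logarithmically in $1/r_B$. The resolution requires replacing the Lemma's bound by the sharper pointwise estimate obtained by choosing the scale $r=1$ in the test function, which forces $|\varphi(x_B)| \lesssim V_1^{-1}$ on $\{d(x,x_B)\leq 1\}$ and therefore $a_0^*(x) \lesssim V_1^{-1/p}$ there; its $L^p$-norm to the $p$-th power is then $V_1^{-1}\cdot |B(x_B,1)| = 1$ as desired. The remaining estimates are routine annular decompositions leveraging the sharpness of $\beta \wedge \theta > \gamma(1/p-1)$ relative to the upper dimension $\gamma$.
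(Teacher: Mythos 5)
Your overall architecture (apply Lemma~\ref{lem:DA}, integrate the $p$-th power, split into the maximal part on $B^*$, the regularity-decay part, and the moment part) matches the paper's, and your treatment of the first two pieces is essentially the paper's. You have also correctly identified the genuine difficulty: for $r_B\leq 1$, the moment term $\abs{\int a\,d\mu}\,V(x,x_B)^{-1}(1+d(x,x_B))^{-\theta}$ integrated over the intermediate region $2A_0r_B\leq d(x,x_B)\lesssim 1$ cannot be summed over dyadic annuli in the radius without a $\log(1/r_B)$ loss, since in a general doubling space the volumes $\abs{B(x_B,2^jr_B)}$ need not grow geometrically in $j$.

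However, your proposed resolution of that obstacle is not valid. The local grand maximal function is a supremum over test functions normalized in $\mathcal{G}(x,r,\beta,\theta)$ for \emph{some} $r\in(0,1]$; you cannot restrict to $r=1$. Taking $\varphi$ adapted to the scale $r\approx d(x,x_B)$, the size condition only gives $\abs{\varphi(x_B)}\lesssim \big(V_r(x)+V(x,x_B)\big)^{-1}\big(\tfrac{r}{r+d(x,x_B)}\big)^{\theta}\approx V(x,x_B)^{-1}$, which for $d(x,x_B)\ll 1$ is much larger than $V_1^{-1}$; indeed for an atom with $\abs{\int a\,d\mu}=V_1^{1-1/p}$ one gets $a_0^*(x)\gtrsim V_1^{1-1/p}V(x,x_B)^{-1}\gg V_1^{-1/p}$ near $B^*$, so the claimed pointwise bound $a_0^*(x)\lesssim V_1^{-1/p}$ on the inner region is false. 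The correct fix (the one the paper uses, following Mac\'ias--Segovia and the construction in Proposition~\ref{prop:406}) is to decompose the inner region by radii $r_B=r_0<r_1<\cdots<r_{m+1}=1$ chosen so that $(A')^{j-1}\abs{B}<\abs{B(x_B,r_j)}\leq (A')^j\abs{B}$; the resulting sum $\sum_j \abs{B(x_B,r_j)}^{-p}\abs{B(x_B,r_{j+1})}$ is then a genuine geometric series in the volume ratios and yields $\int_{2A_0r_B\leq d<2A_0}V(x,x_B)^{-p}\,d\mu\lesssim \abs{B(x_B,1)}^{1-p}$, which combined with $\abs{\int a\,d\mu}^p\leq V_1^{p-1}$ closes the estimate. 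A secondary point: your H\"older/$L^q$-boundedness argument for $\int_{B^*}(\mathcal{M}a)^p\,d\mu$ does not cover $q=1$ (where $\mathcal{M}$ is only weak $(1,1)$); there one needs Kolmogorov's inequality, as in the cited Lemma~4.2 of \cite{HeYangWen21}.
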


We point out each $(p,q,1)$-approximate atom is considered as a distribution in $ \mathcal{G}_0^\eta(\beta,\theta)$
by  Remark \ref{rem:hpcwdistinG}.

\begin{proof}
	Since the case $r_B> 1$ follows directly from \cite[Lemma 4.2]{HeYangWen21}, we will only consider $ r_B \leq 1$. Let $B^{\ast}=B(x_B, 2A_0r_B)$,
	\begin{align*}
		I& := \norm{a^*_0 \mathds{1}_{X\setminus B^{\ast}}}_{L^p}^p \qquad \text{and} \qquad 
		II :=\norm{a^*_0 \mathds{1}_{B^{\ast}}}_{L^p}^p.
	\end{align*}
	We begin with the estimate of $I$. Note that from \eqref{eq:decompatom} in Lemma \ref{lem:DA} we have
	\begin{align}
		I&\lesssim \int_{X\setminus B^{\ast}}\corc{ 
			\corc{ \frac{r_B}{d(x_B,x)} }^{\beta \land \theta} \frac{ |B|^{1-\frac1p}}{V(x_B,x)}}^p d\mu(x) 
		+
		\abs{\int ad\mu}^p\int_{X\setminus B^{\ast}}\corc{     \frac{(1+ d(x,x_B))^{-\theta}}{V(x,x_B)}  }^p d\mu(x) \nn \\
		& := I_1+I_2. \nn
	\end{align} 
	For $ I_1 $, note that for $C_j=B(x_B,2^{j+2}A_0r_B) \setminus B(x_B,2^{j+1}A_0r_B)$ 
	\[
	{\int_{C_j} V({x_B},x)^{-1}d\mu(x) \leq A^\prime,}
	\]
	and for $ j\in \N $, if $ x\in B(x_B,2^{j+2}A_0r_B)$ then
	\begin{align*}
		\frac{\abs{B(x_B,d(x_B,x))}^{1-p}}{ \abs{B(x_B,r_B)}^{1-p} } \leq \frac{\abs{B(x_B,2^{j+2}A_0r_B)}^{1-p}}{ \abs{B(x_B,r_B)}^{1-p} } \leq (A^\prime)^{1-p} (2^{j+2}A_{0})^{\gamma(1-p)}.
	\end{align*}
	Then, we obtain
	\begin{align*}
		I_1&\leq \int_{X \setminus B^{\ast}}  \corc{\frac{r_B}{d(x_B,x)}}^{(\beta\land\theta)p} \frac{\abs{B}^{p-1}}{V(x_B,x)^p}d\mu(x) \\
		&\leq  r_B^{(\beta\land \theta)p} \sum_{j=0}^{\infty} (2^{j+1}A_0r_B)^{-(\beta\land \theta)p} \int_{C_j} \frac{\abs{B(x_B,r_B)}^{p-1}}{\abs{B(x_B,d(x_B,x))}^{p}} d\mu(x) \\
		&\leq (A^\prime)^{1-p}{(A_0)^{-(\beta\land\theta)p+\gamma(1-p)}} \, \sum_{j=0}^{\infty}  2^{\gamma (j+2)(1-p)} 2^{-(j+1)(\beta\land \theta)p}  \int_{C_j} V(x_B,x)^{-1}d\mu(x)\\
		&\leq (A^\prime)^{2-p}{\left( 2^{{2}\gamma(1-p)} \,  (A_0)^{-(\beta\land\theta)p+\gamma(1-p)} \right)} 
		\sum_{j=0}^{\infty} (2^{(\beta\land \theta)p -\gamma(1-p)})^{-(j+1)}.
	\end{align*}
	Since $\beta,\theta > \gamma \left(\frac{1}{p}-1\right) $ we have $(\beta\land\theta)p-\gamma(1-p)>0$, and so the sum in the last inequality converges. Then  
	\begin{equation}\label{eq:20006}
		I_1\leq  C_{A_0,A^\prime,p,\beta,\theta,\gamma} .
	\end{equation}
	Now we estimate $ I_2 $. Note that if $ \abs{B} = \abs{B(x_B,1)}$ we obtain
	\begin{align}
			\int_{2A_0r_B \leq d(x,x_B)<2A_0} V(x,x_B)^{-p} d\mu(x)&\lesssim \abs{B(x_B,2A_0r_B)}^{-p}\abs{B(x_B,2A_0)} \nonumber \\
			& {\leq}  \abs{B(x_B,r_B)}^{-p}\abs{B(x_B,1)} \nn \\
			&= \abs{B(x_B,1)}^{1-p}{,} \label{eq:20001}
	\end{align}
	and if $ \abs{B}<\abs{B(x_B,1)} $, there will exist a non-negative integer $ m $ and positive numbers $ r_1,r_2,\cdots,r_m $
	such that $ r_B<r_1<\cdots <r_m<r_{m+1}:=1 $ and
	\begin{equation}\label{eq:unifboundpqfunc1}
		(A^\prime)^{j-1}\abs{B}<\abs{B(x_B,r_j)}\leq (A^{\prime})^{j}\abs{B},
	\end{equation}
	for all $ 0\leq j\leq m+1 $ (see Proposition \ref{prop:406} for the proof of this construction). Then by \eqref{eq:unifboundpqfunc1} we obtain
	\begin{align}
		\int_{2A_0r_B\leq d(x,x_B)<2A_0} &V(x,x_B)^{-p} d\mu(x)
		= \sum_{j=0}^{m}\int_{2A_0r_j\leq d(x,x_B)<2A_0r_{j+1}} V(x,x_B)^{-p} d\mu(x) \nn\\
		& \lesssim  \sum_{j=0}^{m}  \abs{ B(x_B,2A_0r_j) }^{-p} \abs{ B(x_B,2A_0r_{j+1}) } \nn\\
		& \lesssim  \sum_{j=0}^{m}  \abs{ B(x_B,r_j) }^{-p} \abs{ B(x_B,r_{j+1}) } 
		\leq  (A^\prime)^{1+p}\abs{B}^{1-p}\sum_{j=0}^m  (A^\prime)^{(1-p)j} \nn\\
		&\leq \frac{(A^\prime)^{1+p}}{(A^{\prime})^{1-p}-1}\abs{B}^{1-p} [(A^{\prime})^{(1-p)(m+1)}-1] \nn\\
		&\leq  \frac{(A^\prime)^2}{(A^{\prime})^{1-p}-1}[(A^{\prime})^{m}\abs{B}]^{1-p}  \nn \\
		&\leq \frac{(A^\prime)^2}{(A^{\prime})^{1-p}-1} \, \abs{B(x_B,1)}^{1-p}, \label{eq:20002}
	\end{align}
	where in the last inequality we have used \eqref{eq:unifboundpqfunc1} with $j=m+1$. So, from \eqref{eq:20001} and \eqref{eq:20002} we obtain
	\begin{equation}\label{eq:20003}
		\int_{2A_0r_B\leq d(x,x_B)<2A_0} V(x,x_B)^{-p} d\mu(x) \lesssim \abs{B(x_B,1)}^{1-p}.
	\end{equation}
	On the other hand
	\begin{align}
		\int_{2A_0\leq d(x,x_B)} \frac{d(x,x_B)^{-\theta p}}{V(x,x_B)^p}  d\mu(x) 
		&\lesssim  \sum_{j=1}^{\infty} (2^jA_0)^{-\theta p} \int _{2^{j}A_0\leq d(x,x_B)\leq 2^{j+1}A_0} \frac{1}{\abs{B(x_B,2^{j}A_0)}^p}  d\mu(x) \nn \\
		& \lesssim \sum_{j=1}^{\infty} 2^{-j\theta p} \frac{\abs{B(x_B,2^{j+1}A_0)}}{\abs{B(x_B,2^{j}A_0)}^p} 
		\lesssim \sum_{j=1}^{\infty} 2^{-j\theta p} \abs{B(x_B,2^{j}A_0)}^{1-p} \nn\\
		& \lesssim \sum_{j=1}^{\infty} 2^{j[\gamma(1-p)-\theta p]} \abs{B(x_B,A_0)}^{1-p} \nn\\
		& {\leq (A'A_0^\gamma)^{1-p}} \abs{B(x_B,1)}^{1-p}\sum_{j=1}^{\infty}  2^{j[\gamma(1-p)-\theta p]} \lesssim \abs{B(x_B,1)}^{1-p} \label{eq:20004}
	\end{align}
	where the constant in the last inequality depends only on $A_0,\gamma,A^\prime, p $ and $ \theta $ and the convergence of the sum follows since $ \gamma(\frac1p-1) <\theta$. Then, from \eqref{eq:20003}, \eqref{eq:20004} and approximate moment condition of $ a $ we obtain
	\begin{align}
		I_2&\leq
		\abs{\int ad\mu}^p \lla{ \int_{2A_0r_B \leq d(x,x_B)<2A_0}\frac{1}{V(x,x_B)^{p}}   d\mu(x) + \int_{2A_0\leq d(x,x_B)}\corc{ \frac{  d(x,x_B)^{-\theta}}{V(x,x_B)}  }^p d\mu(x)} \nn \\
		& \lesssim \abs{\int ad\mu}^p  \abs{B(x_B,1)}^{1-p}\lesssim 1 \label{eq:20005}
	\end{align}
	where the constant in the last inequality only depends on $A_0,\gamma,A^\prime, p $ and $ \theta $. So, from \eqref{eq:20006} and \eqref{eq:20005} we obtain
	\[
	I\leq C_{A_0,A^\prime,p,\beta,\theta,\gamma}.
	\]
	
	To estimate $ II $, we follow the same lines as in the proof of \cite[Lemma 4.2]{HeYangWen21} (Case 1 and Case 2) since the moment condition of $ a $ does not play any role in the argument.
\end{proof}

{Now we are ready to state the desired result.}

\begin{theorem}\label{prop:comparison}	
	Let $ p\in (\frac\gamma{\gamma+\eta}, 1) $, $ q\in [1,\infty] $ and $ \beta,\theta\in \, (\gamma(\frac1p-1),\eta) $. In regard $ h^{p}_{\#}(X) $ and $ h^{p}(X) $ as subspaces of $ (\mathcal{G}^\eta_0(\beta,\theta))^{\ast} $, then $h^{p}_{\#}(X)=h^{p}(X) $ with equivalent norms.
\end{theorem}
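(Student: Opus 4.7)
The plan is to establish the two-sided continuous inclusion by combining Proposition \ref{teo:670} (which bounds the grand maximal function of approximate atoms) with the atomic decomposition of $h^p(X)$ in terms of local $(p,\infty)$-atoms (Proposition 4.12 of \cite{HeYangWen21}). The guiding observation is that by \eqref{eq:684} we have $\mathcal{G}_0^\eta(\beta,\theta)\hookrightarrow \ell_{1/p-1,1}(X)$ continuously, so every $g\in\ell^{\ast}_{1/p-1,1}(X)$ restricts canonically to a distribution in $(\mathcal{G}_0^\eta(\beta,\theta))^{\ast}$, and convergence in the stronger dual implies convergence in the weaker one. This compatibility will allow us to transfer atomic decompositions between the two frameworks without ambiguity.

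First I would prove $h^p_{\#}(X)\hookrightarrow h^p(X)$. Given $f\in h^p_{\#}(X)$ with an atomic representation $f=\sum_j\lambda_j a_j$ in $\ell^{\ast}_{1/p-1,1}(X)$, where each $a_j$ is a $(p,q,1)$-approximate atom, the series also converges in $(\mathcal{G}_0^\eta(\beta,\theta))^{\ast}$ by the embedding above. Since the grand local maximal function is sublinear and $p$-subadditive under $L^p$, one gets $\|f_0^*\|_{L^p}^p\leq\sum_j|\lambda_j|^p\,\|(a_j)_0^*\|_{L^p}^p$, and Proposition \ref{teo:670} supplies the uniform bound $\|(a_j)_0^*\|_{L^p}\leq C$ (valid for $\beta,\theta\in(\gamma(1/p-1),\eta)$). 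Taking the infimum over admissible decompositions yields $\|f\|_{h^p}\leq C\|f\|_{h^p_{\#}}$.

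For the converse $h^p(X)\hookrightarrow h^p_{\#}(X)$, I would invoke Proposition 4.12 of \cite{HeYangWen21}: any $f\in h^p(X)$ admits a decomposition $f=\sum_j\lambda_ja_j$ in $(\mathcal{G}_0^\eta(\beta,\theta))^{\ast}$, where the $a_j$ are local $(p,\infty)$-atoms and $\sum_j|\lambda_j|^p\lesssim\|f\|_{h^p}^p$. The crucial check is that every local $(p,\infty)$-atom $a$ supported on $B=B(x_B,r_B)$ is automatically a $(p,\infty,1)$-approximate atom: the support and size conditions are identical; when $r_B<1$ the local vanishing condition $\int a\,d\mu=0$ trivially implies \eqref{momento}, and when $r_B\geq 1$ one estimates $|\int a\,d\mu|\leq\|a\|_{L^\infty}|B|\leq |B|^{1-1/p}\leq |B(x_B,1)|^{1-1/p}$, using $1-1/p<0$ and $|B(x_B,1)|\leq |B|$. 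Hence by Proposition \ref{prop:414} the same series converges in $\ell^{\ast}_{1/p-1,1}(X)$ to some $\tilde f\in h^p_{\#}(X)$ with $\|\tilde f\|_{h^p_{\#}}^p\lesssim\sum_j|\lambda_j|^p$. By the embedding \eqref{eq:684} the two limits coincide in $(\mathcal{G}_0^\eta(\beta,\theta))^{\ast}$, so $\tilde f=f$ and $\|f\|_{h^p_{\#}}\lesssim \|f\|_{h^p}$.

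The only real obstacle is the bookkeeping of the ambient distribution spaces: one must verify that identifying elements of $\ell^{\ast}_{1/p-1,1}(X)$ with their restrictions to $\mathcal{G}_0^\eta(\beta,\theta)$ is consistent with both atomic frameworks, so that the object reconstructed in $h^p_{\#}(X)$ from a $h^p(X)$-decomposition is genuinely the same distribution we started with. This is handled cleanly by Remark \ref{rem:hpcwdistinG} and the fact that convergence in $\ell^{\ast}_{1/p-1,1}$ implies convergence in $(\mathcal{G}_0^\eta(\beta,\theta))^{\ast}$; no additional cancellation analysis is needed because the approximate moment condition and the local vanishing condition of Goldberg-type atoms match up exactly in the regime $r_B<1$, which is the only regime where any moment information enters.
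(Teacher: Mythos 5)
Your proposal is correct and follows essentially the same route as the paper: one direction via the uniform grand-maximal-function bound for approximate atoms (Proposition \ref{teo:670}) together with the embedding \eqref{eq:684}, and the converse via the local $(p,\infty)$-atomic decomposition of $h^p(X)$, observing that local atoms are approximate atoms and invoking Proposition \ref{prop:414}. The only cosmetic difference is that the paper identifies the limit in the converse by extending $f$ to a functional on $\ell_{1/p-1,1}(X)$ through the duality $(h^p_{cw})^*=\ell_{1/p-1,1}(X)$, whereas you identify the two limits directly by restricting the $\ell^{\ast}_{1/p-1,1}$-limit to $\mathcal{G}_0^\eta(\beta,\theta)$; both are adequate since the theorem regards both spaces as subspaces of $(\mathcal{G}_0^\eta(\beta,\theta))^{\ast}$.
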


\begin{proof}
	In view of Proposition \ref{prop:502}, it will be sufficient to show the theorem for $ q=\infty $.
	We start showing that $ h^{p,\infty}_{\#}(X)\subset h^{p}(X) $, following the proof presented at \cite[Section 4.1]{HeHanLiLYY19}. 
	
	Let $ f\in h^{p,\infty}_{\#}(X) $. Then, there exist $\lla{\lambda_j}\in \ell^{p}(\C) $ and $ (p,\infty,1) $-approximate atoms $ a_j $ such that
$f=\sum_{j=1}^{\infty} \lambda_j a_j,$
in $ \ell^\ast_{1/p-1,1}(X)$. Since by \eqref{eq:684} we have $\mathcal{G}_{0}^{\eta}(\beta,\theta)\subset \ell_{1/p-1,1}(X)$, the restriction $ g:=f \,\vline_{ \,\mathcal{G}^\eta_0(\beta,\theta)} \in (\mathcal{G}^\eta_0(\beta,\theta))^\ast $ and $ g=\sum_{j=1}^{\infty} \lambda_j a_j $ in $ (\mathcal{G}^\eta_0(\beta,\theta))^\ast $. Then,
	\begin{align*}
		g_{0}^*(x)\leq \sum_{j=1}^{\infty} \abs{\lambda_j} (a_j)_{0}^*(x), \qquad \forall \, x \in X
	\end{align*}
	and from the Proposition \ref{teo:670}
	\begin{align*}
		\norm{g_{0}^*}_{L^p(X)}^p\leq \sum_{j=1}^{\infty} \abs{\lambda_j}^p \norm{(a_j)_{0}^*}_{L^p(X)}^p \lesssim \sum_{j=1}^{\infty} \abs{\lambda_j}^p.
	\end{align*}
	This shows that $ g\in h^{p}(X) $ and also by the arbitrariness of the decomposition
	\begin{equation*}
		\norm{g}_{h^{p}}\lesssim \norm{f}_{{p,\infty}}.
	\end{equation*}
	In this sense, $ h^{p,\infty}_{\#}(X)\subset h^{p}(X) $. 
	
	Now we deal with the inclusion $ h^{p}(X)\subset h^{p,\infty}_{\#}(X) $. Recall  that $h^{p}(X) = h^{p}_{cw}(X)$, where here $h^{p}_{cw}(X)$ is characterized by the atomic space defined in terms of local $(p,\infty)$-atoms with convergence in $(\mathcal{G}^{\eta}_0(\beta,\theta))^\ast$ (see \cite[Definition 4.1 and Proposition \olive{4.13}]{HeYangWen21}.) So, given $ f\in h^{p}_{cw}(X) \cap (\mathcal{G}^{\eta}_0(\beta,\theta))^\ast$, there exist $ \lla{\lambda_j}_{j} \in \ell^p(X) $ and a sequence $\left\{a_j \right\}$ of local $ (p,\infty)$-atoms  such that $ f=\sum_{j=1}^{\infty} \lambda_j  a_j $ in $ (\mathcal{G}^\eta_{0}(\beta,\theta))^\ast $. From \cite[Theorem 7.4 and Corollary 7.5]{HeYangWen21} we have that $ (h^{p}_{cw}(X))^\ast = \ell_{1/p-1,1}(X) $, and so the action $ \vp(f) $ is well defined  for any $ \vp\in \ell_{1/p-1,1}(X) $. Then we define	$ \Lambda_f : \ell_{1/p-1,1}(X)\longrightarrow \C  $ by $ \Lambda_f(\vp):=\vp(f) $.
	Since the sequence of partial sums of $ \sum_{j=1}^{\infty}\lambda_ja_j $ converges to $ f $ in $ h^{p}_{cw}(X)$-norm, 
	we have
	\begin{equation}\label{eq:690}
		\Lambda_f (\vp)= 
		\lim_{n\to \infty} \vp\pare{\sum_{j=1}^n \lambda_j a_j}
		= \lim_{n\to \infty} \sum_{j=1}^n \lambda_j \int a_j \vp \, d\mu
	\end{equation}
	for any $ \vp\in \ell_{1/p-1,1}(X) $ (see \cite[Theorem 7.4]{HeYangWen21}). Note that \eqref{eq:690} shows that $ \Lambda_f $ is independent of the decomposition $ \sum_{j=1}^\infty \lambda_j a_j $ and $ \Lambda_f=f $ on $ \mathcal{G}^\eta_{0}(\beta,\theta)$. 
	From \eqref{eq:690} and Proposition \ref{prop:414} we have 
	$ \Lambda_f\in h^{p,\infty}_{\#}(X) $, and 
	\[
	\norm{\Lambda_f}_{h^{p,\infty}_{\#}}^p\leq \sum_{j=1}^{\infty} \abs{\lambda_j}^{p}.
	\]
	From the arbitrariness of the decomposition 
	of $ f $ we have
	\[
	\norm{\Lambda_f}_{h^{p,\infty}_{\#}}\leq \norm{f}_{h^{p}_{cw}}
	\]
	for all $ f\in h^{p}_{cw}(X) \cap (\mathcal{G}^{\eta}_0(\beta,\theta))^\ast $, as desired. 
\end{proof}

\section{Continuity of inhomogeneous Calder\'on-Zygmund type operators}\label{sec:app}

In this section we discuss conditions for the boundedness of inhomogeneous Calder\'on-Zygmund operators of order $(\nu,s)$ on local Hardy spaces. For the sake of completeness, we write the precise definition of some elements already mentioned at the introduction. Let $ s\in(0,1] $ and denote by $ C(X) $ the space of continuous functions in $ X $. Recall that the space of \emph{$ s $-H\"older continuous functions} (homogeneous) on $ X $ is defined by
\[
C^s(X)=\lla{f\in C(X):\norm{f}_{L^\infty} +\sup_{x\neq y} \frac{\abs{f(x)-f(y)}}{d(x,y)^s}<\infty }.
\]
We denote by $ V_{s} := C_b^s(X) $ the space of functions in $ C^s(X) $ with bounded support. The set of continuous linear functionals on $ C^s_b(X) $ will be denoted by $ (C_b^s(X))^\ast $, and it will be equipped with the weak* topology. We refer to the elements in $ (C_b^s(X))^\ast $ as distributions. 

\begin{definition}\label{def:CZO1}
	A $\mu$-mensurable function $ K: (X\times X)\setminus\lla{ (x,x):x\in X } \to \C $ is called an inhomogeneous Calder\'on-Zygmund kernel of order $ (\nu,s) $ if it satisfies conditions \eqref{forteb} and \eqref{czstd1}. A linear and bounded operator $ R:V_s(X)\to V_s^\ast(X) $ is called an \textit{inhomogeneous Calder\'on-Zygmund operator of order $ (\nu,s) $} if it is associated to an inhomogeneous Calder\'on-Zygmund kernel of order $ (\nu,s) $ in the integral sense \eqref{czs} and is bounded in $L^2(X)$.
\end{definition}

In what follows we prove the well definition of \eqref{czs} and $R^\ast (1)$ when $R$ is an inhomogeneous Calder\'on-Zygmund operator of order $(\nu,s)$ for every $f\in L^{2}_{c}(X)$. 

\begin{proposition} \label{prop:well-def-T*1}
	Let $R$ be an inhomogeneous Calder\'on-Zygmund operator of order $(\nu,s)$. 
 Then $Rf \in L^1(X)$ for every $f\in L^2_{c}(X)$. Moreover $R^{*}(1) \in L^{2}_{loc}(X)$ in the following sense: there exists $ F\in L^2_{loc}(X) $ such that
$$\ip{ R^*(1),g }=\int F(x)g(x)d\mu(x),\qquad\qquad \forall \, g\in {L^2_c(X)}.$$
 \end{proposition}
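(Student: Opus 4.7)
The plan is to handle the two assertions of the proposition in sequence, both built on the enhanced off-diagonal decay in \eqref{forteb} combined with Proposition \ref{prop:fv}.

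First I would prove that $Rf\in L^1(X)$ for $f\in L^2_c(X)$ with $\supp f\subset B_0=B(x_0,r_0)$ by splitting
\[
\int_X |Rf|\,d\mu = \int_{2A_0 B_0} |Rf|\,d\mu + \int_{X\setminus 2A_0 B_0} |Rf|\,d\mu.
\]
The near piece is handled by Cauchy--Schwarz, doubling, and the $L^2$-boundedness of $R$, giving $\int_{2A_0 B_0} |Rf|\,d\mu \lesssim |B_0|^{1/2}\|f\|_{L^2}$. For the far piece, since $x\notin\supp f$, the integral representation \eqref{czs} applies; moreover when $x\notin 2A_0 B_0$ and $y\in B_0$ the quasi-triangle inequality gives $d(x,y)\approx d(x,x_0)\gtrsim r_0$ and Proposition \ref{prop:fv}\eqref{eq:fv1} yields $V(x,y)\approx V(x,x_0)$. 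Using the second factor in \eqref{forteb} and Proposition \ref{prop:fv}\eqref{eq:fv2} with $a=\nu$,
\[
\int_{X\setminus 2A_0 B_0}\!|Rf(x)|\,d\mu(x)\lesssim \int_{B_0}|f(y)|\int_{d(x,x_0)\gtrsim r_0}\!\!\frac{d\mu(x)}{V(x,x_0)\,d(x,x_0)^\nu}d\mu(y)\lesssim r_0^{-\nu}\|f\|_{L^1},
\]
which is finite.

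Next, to construct $F\in L^2_{loc}(X)$ I would build it ball by ball. Fix a ball $B=B(x_B,r_B)$ and define on $B$
\[
F_1^{(B)}(y):=R^{\ast}(\chi_{2A_0 B})(y),\qquad F_2^{(B)}(y):=\int_{X\setminus 2A_0 B} K(x,y)\,d\mu(x).
\]
Since $R$ is bounded on $L^2(X)$, so is $R^{\ast}$, whence $F_1^{(B)}\in L^2(X)$. For $F_2^{(B)}$, the same estimates as above (with the roles of $x$ and $y$ swapped and with $B_0$ replaced by $B$) yield $|F_2^{(B)}(y)|\lesssim r_B^{-\nu}$ uniformly in $y\in B$. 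Hence $F^{(B)}:=F_1^{(B)}+F_2^{(B)}\in L^2(B)$. To see that $F^{(B)}$ represents $R^{\ast}(1)$ on $B$, take $g\in L^2_c(X)$ with $\supp g\subset B$ and decompose
\[
\langle R^{\ast}(1),g\rangle=\int Rg\,d\mu=\int_{2A_0 B} Rg\,d\mu+\int_{X\setminus 2A_0 B} Rg\,d\mu.
\]
For the near part, duality gives $\int_{2A_0 B} Rg\,d\mu=\int g\cdot F_1^{(B)}\,d\mu$. For the far part, $x\notin\supp g$ so \eqref{czs} applies; the same kernel bound that controlled $F_2^{(B)}$ shows that the iterated integral is absolutely convergent, so Fubini legitimately interchanges the order and produces $\int g\cdot F_2^{(B)}\,d\mu$.

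Finally, consistency under enlargement of $B$ is automatic: if $B\subset B'$, then for any $g\in L^2_c(B)$ both identities hold, forcing $\int g(F^{(B)}-F^{(B')})\,d\mu=0$ and thus $F^{(B)}=F^{(B')}$ a.e.~on $B$. This glues the local functions into a single $F\in L^2_{loc}(X)$ representing $R^{\ast}(1)$. The main obstacle is the Fubini interchange and the uniform bound on $F_2^{(B)}$; both are resolved by exploiting the extra $d(x,y)^{-\nu}$ factor in \eqref{forteb}, which is precisely what distinguishes the inhomogeneous setting and makes Proposition \ref{prop:fv}\eqref{eq:fv2} applicable.
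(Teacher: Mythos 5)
Your argument is correct. The first half (showing $Rf\in L^1(X)$ by splitting into $2A_0B_0$ and its complement, using Cauchy--Schwarz plus $L^2$-boundedness near the support and the $d(x,y)^{-\nu}$ decay in \eqref{forteb} together with Proposition \ref{prop:fv}\eqref{eq:fv2} far from it) is essentially identical to the paper's. The second half takes a genuinely different route: the paper simply observes that the $L^1$ bound makes $g\mapsto \ip{R^*(1),\1_B g}$ a bounded linear functional on $L^2(B)$ and invokes the Riesz representation theorem to produce an abstract $F^B\in L^2(B)$, whereas you construct the representative explicitly as $F^{(B)}=R^{*}(\1_{2A_0B})+\int_{X\setminus 2A_0B}K(x,\cdot)\,d\mu(x)$ and verify the pairing identity by Hilbert-space duality for the near piece and an absolutely convergent Fubini interchange for the far piece. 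The gluing via consistency on nested balls is the same in both. Your version costs a little more work (the uniform bound on $F_2^{(B)}$ and the Fubini justification) but yields an explicit formula for the $L^2_{loc}$ representative of $R^*(1)$, which could be convenient when one actually needs to estimate $R^*(1)$, e.g.\ in verifying hypotheses like \eqref{1.8}; the paper's route is shorter and purely abstract. Two small bookkeeping points, neither of which is a gap relative to the paper: both proofs tacitly extend the kernel representation \eqref{czs} from $f\in V_s(X)$ to $f\in L^2_c(X)$ off the support (a routine density argument), and if one works with complex scalars the adjoint in $F_1^{(B)}=R^*(\1_{2A_0B})$ should be taken with respect to the bilinear pairing (or conjugates inserted) so that $\int_{2A_0B}Rg\,d\mu=\int g\,F_1^{(B)}\,d\mu$ holds literally.
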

Consequently $R^{*}(1)$ is well defined in the sense of distributions
\begin{equation}\label{T*1-definition}
\ip{R^*(1),f}:=
\int Rf(x)d\mu(x),\qquad \forall f \in L^2_{c}(X).
\end{equation}

\begin{proof}
	Let $f\in L^2_{c}(X)$ and suppose that $ \supp f\subset B $. Then
	\begin{align*}
		\int_{2A_0B} \abs{Rf(x)}d\mu(x)\leq \abs{2A_0B}^{\frac12}\norm{ Rf }_{L^2(X)}\leq \abs{2A_0B}^{\frac12}\norm{ R }_{L^2\to L^2} \norm{f}_{L^2}.
	\end{align*}
	To deal with the estimate in $(2A_0B)^\complement$, from $ (i') $ in the previous definition and Proposition \ref{prop:fv} \eqref{eq:fv2} we obtain
	\begin{align*}
		\int_{(2A_0B)^\complement} \abs{Rf(x)}d\mu(x)&\leq \int_{(2A_0B)^\complement} \int_B |K(x,y)|\abs{f(y)}d\mu(y) d\mu(x) \\
		& \leq \int_B \abs{f(y)} \int_{(2A_0B)^\complement}  \frac{1}{V(x,y) d(x,y)^\nu}d\mu(x)d\mu(y)\\
		& \leq \int_B \abs{f(y)} \int_{(B(y,r_B))^\complement}  \frac{1}{V(x,y) d(x,y)^\nu}d\mu(x)d\mu(y)\\
		& \lesssim r_B^{-{\nu}} \int_B \abs{f(y)} d\mu(y)\\
		& \leq r_B^{-{\nu}} \abs{B}^\frac12 \norm{f}_{L^2}.
	\end{align*}
Combining the previous estimates we have
	\begin{equation}\label{eq:R1funct}
		\norm{Rf}_{L^1} \lesssim  \abs{B}^{\frac12}\pare{\norm{ R }_{L^2}  + r_B^{-\nu}} \norm{f}_{L^2}.
	\end{equation}
Moreover, for each ball B there exists $ F\in L^2_{loc}(B) $ such that
$$\ip{ R^*(1),g }=\int F(x)g(x)d\mu(x),\qquad\qquad \forall \, g\in {L^2_c(B)}.$$
In fact,  the functional $ g\mapsto \ip{ R^*(1),\1_B g } $ defined on $ L^2(B) $ is bounded from \eqref{eq:R1funct}, and then by the Riesz representation theorem there exists $ F^B\in L^2(B) $ such that
		\begin{equation*}
			\ip{ R^*(1),\1_B g } = \int R(\1_{B}g)(x)d\mu(x)= \int_{B} F^B(x)g(x) d\mu(x)
		\end{equation*}
		for all $ g\in L^2(B) $ (and in particular for all $ g\in L^2_c(X) $ with $ \supp g\subset B $).
We point out that if $ B_1\subset B_2 $ and $ g\in L^2(B_1) $ then
		\begin{equation*}
			\int_{B_1} \1_{B_1} F^{B_2} g\, d\mu = \int_{B_2}  F^{B_2} \pare{\1_{B_1} g}\, d\mu =	\ip{R^*(1),\1_{B_2}\pare{\1_{B_1} g}}=\ip{ R^*(1),\1_{B_1} g } = \int_{B_1} F^{B_1} g\, d\mu.
		\end{equation*}
Thus $ \1_{B_1}\, F^{B_2}=F^{B_1} $ almost everywhere in $ B_1 $. Using a sequence of nested subsets $ B_1\subset B_2\subset \cdots \subset X $ that exhausts $ X $, we are able to define $ F\in L^2_{loc}(X) $ such that $ F\vline_{B_j}=F^{B_j} $. Moreover, for $ g\in L^2_c(X) $ with $ \supp g\subset B $ we obtain
		\[
		\ip{ R^*(1),g }=\ip{ R^*(1),\1_B g } = \int_B F^B(x)g(x) d\mu(x) = \int_B F(x) g(x) d\mu(x)= \int F(x) g(x) d\mu(x).
		\]
\end{proof}

In the next proposition, we show the expected property that if $R$ is an inhomogeneous Calder\'on-Zygmund operator of order $ (\nu,s) $, then it maps local atoms into approximate molecules. This strategy has been used extensively in the Euclidean setting to infer boundedness of Calder\'on-Zygmund type operators in Hardy spaces. However, in contrast to the setting of Hardy spaces in $\R^{n}$, this property is not sufficient to conclude the boundedness in $h^{p}_{\#}(X)$, as it will be discussed later. 

\begin{proposition}\label{prop:atomtomol3}
	Let $ R $ be an inhomogeneous Calder\'on-Zygmund operator of order $ (\nu,s) $ and 
	$ \frac{\gamma}{\gamma+\min\lla{\nu,s}} <p<1$. 
Suppose that there exists $C>0$ such that for any ball $B:=B(x_{B},r_{B})\subset X$ with $r_{B}<T$ we have that $f:=R^{\ast}(1)$ satisfies
		\begin{equation}\label{eq:atomtomol3}
		\bigg( \fint_B \abs{f-f_B}^2 d\mu \bigg)^{1/2} \leq C \abs{B(x_B,T)}^{1-\frac1p} \abs{B(x_{B},r_{B})}^{\frac1p-1} .
		\end{equation}
		If $a$ is a local $ (p,2)$-atom supported in $ B(x_B,r_B)$, then $ Ra $ is a multiple constant of a $ (p,2,T, \lambda)$-approximate molecule centered in $ B(x_B,2A_0r_B) $, for some $ \lambda $ satisfying \eqref{lambda-type-condition}.
\end{proposition}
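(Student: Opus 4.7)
The plan is to verify the three molecule conditions from Definition \ref{def:appmol} for $Ra$ (up to a uniform multiplicative constant) with the center ball $B^\star := B(x_B, 2A_0 r_B)$ and some geometric-decay sequence $\lambda = \{\lambda_k\}$ of the form $\lambda_k = C\, 2^{-k[\min\{\nu,s\}-\gamma(1/p-1)]}$. Under the hypothesis on $p$, the exponent is strictly positive, so $\sum_k k\lambda_k^p < \infty$ holds as required in \eqref{lambda-type-condition}.

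\textbf{Step 1 (size on $B^\star$).} The $L^2$-boundedness of $R$ together with the atomic size bound gives
$\|Ra\|_{L^2(B^\star)} \leq \|R\|_{L^2\to L^2}\, |B|^{1/2-1/p}$,
and the doubling property \eqref{thII} allows me to replace $|B|$ by $|B^\star|$ (with constant depending only on $A_0,A'$) since $1/2-1/p<0$.

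\textbf{Step 2 (annular decay).} Fix $k\ge 1$ and $x\in A_k^\star = 2^k B^\star\setminus 2^{k-1}B^\star$, so $d(x,x_B)\sim 2^k r_B$ and, by Proposition \ref{prop:fv}(i), $V(x,x_B)\sim |2^k B^\star|$. I split on the size of $r_B$.

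If $r_B < T$, then $a$ has vanishing moment and I write $Ra(x)=\int [K(x,y)-K(x,x_B)]a(y)\,d\mu(y)$. Applying the regularity \eqref{czstd1} (admissible since $2A_0\, d(y,x_B)\le 2A_0 r_B \le d(x,x_B)$), one obtains $|Ra(x)|\lesssim 2^{-ks}|B|^{1-1/p}/|2^k B^\star|$. If instead $r_B\ge T$, no cancellation is used and the strong kernel bound \eqref{forteb} yields $|Ra(x)|\lesssim (2^k r_B)^{-\nu}|B|^{1-1/p}/|2^k B^\star|\le T^{-\nu}\,2^{-k\nu}|B|^{1-1/p}/|2^k B^\star|$. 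Multiplying by $|A_k^\star|^{1/2}\lesssim |2^k B^\star|^{1/2}$ and using $|B|/|2^k B^\star|\gtrsim 2^{-k\gamma}$ from the upper dimension \eqref{upper} (raised to the negative exponent $1-1/p$), I get in both cases
\[
\|Ra\,\mathbf{1}_{A_k^\star}\|_{L^2} \lesssim 2^{-k[\min\{\nu,s\}-\gamma(1/p-1)]}\, |2^k B^\star|^{1/2-1/p},
\]
which is the desired bound with the claimed $\lambda_k$.

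\textbf{Step 3 (approximate moment condition).} I use Proposition \ref{prop:well-def-T*1} to represent $R^*(1)$ by $F\in L^2_{\loc}(X)$, so that $\int Ra\,d\mu = \int F\,a\,d\mu$. If $r_B<T$, the vanishing moment of $a$ lets me subtract the average $F_B$:
\[
\Bigl|\int Ra\,d\mu\Bigr| = \Bigl|\int_B (F-F_B)\,a\,d\mu\Bigr| \le |B|^{1/2}\Bigl(\fint_B|F-F_B|^2\,d\mu\Bigr)^{1/2}\|a\|_{L^2}.
\]
Plugging in the hypothesis \eqref{eq:atomtomol3} and the size bound on $a$, the $|B|$-factors collapse and leave exactly $|B(x_B,T)|^{1-1/p}$. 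If $r_B\ge T$, I instead bound $|\int Ra|$ using Cauchy--Schwarz on each annulus together with Steps 1--2: the sum is controlled by $|B^\star|^{1-1/p}(1+\sum_k \lambda_k)$, and since $1-1/p<0$ and $|B^\star|\sim|B|\ge|B(x_B,T)|$, this is $\lesssim |B(x_B,T)|^{1-1/p}$ as required.

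The key obstacle is precisely Step 3 in the regime $r_B<T$: standard $H^p\to H^p$ arguments rely on $R^*(1)=0$, which forces $\int Ra=0$. The Campanato-type hypothesis \eqref{eq:atomtomol3} is exactly the right quantitative substitute; without it, the cancellation of $a$ cannot be converted into any useful bound on $\int Ra\,d\mu$. A secondary care point is ensuring that the constant in front of $\lambda_k$ is independent of the atom, which is automatic once one tracks that the $T$-dependence appears only through $T^{-\nu}$ in the $r_B\ge T$ case.
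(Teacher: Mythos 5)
Your proposal is correct and follows essentially the same route as the paper: $L^2$-boundedness on the core ball, kernel regularity plus the atom's vanishing moment (resp.\ the strong decay $d(x,y)^{-\nu}$ when $r_B\ge T$) on the annuli to produce $\lambda_k \approx 2^{-k[\min\{\nu,s\}-\gamma(1/p-1)]}$, and the $L^2_{\loc}$ representation of $R^*(1)$ together with the Campanato hypothesis for the approximate moment. The only cosmetic difference is that you take a pointwise sup on each annulus and multiply by $|A_k^\star|^{1/2}$ where the paper runs Minkowski's inequality directly on the $L^2$ norm; the two are interchangeable here.
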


\begin{proof} Let $a$ be a local $ (p,2)$-atom supported in $B=B(x_B,r_{B})$. We claim that $Ra$ is a multiple of a $ (p,2,T,\lambda)$-approximate molecule centered in $ B^{*}:=B(x_B,2A_{0}r_{B})$. In fact, from the $L^{2}$-boundedness of $R$ it follows
\begin{align}
			\norm{Ra \1_{B^{*}}}_{L^2}&\leq \norm{Ra}_{L^2} \leq \norm{R}_{L^2} \norm{a}_{L^2} 
			\leq {[A'(2A_{0})^\gamma]^{\left(\frac{1}{p}-\frac{1}{2} \right)}} \norm{R}_{L^2} \abs{B^{*}}^{\frac12-\frac1p}. \label{eq:atomtomol10}
		\end{align}
To verify the condition (ii) from Definition \ref{def:appmol}, let us assume $r_{B}<T$ and let $C_k = B(x_{B},A_{0}2^{k+1}r_{B}) \setminus B(x_{B},A_{0}2^{k}r_{B})$ for $k \geq 1$. Then, using the vanishing moments of $a$ we may write
\begin{align*}
\norm{Ra}_{L^2(C_k)} &= \corc{\int_{C_k} \abs{ \int_{B} \left[ K(x,y)-K(x,x_B)\right]a(y)d\mu(y) }^2 d\mu(x)}^\frac12\\
&\leq  \corc{\int_{B} |a(y)| \left[\int_{C_{k}}  |K(x,y)-K(x,x_B)|^{2}d\mu(x) \right]^{\frac{1}{2}}  d\mu(y)}
\end{align*}
For each $y \in B$ and $x \in C_{k}$ we have $d(x,x_{B})>(2^{k}A_{0})d(y,x_{B})$ and by the pointwise kernel estimate \eqref{czstd1} it follows from Proposition \ref{prop:fv}
\begin{align*}
\int_{C_{k}}  |K(x,y)-K(x,x_B)|^{2}d\mu(x)
&\leq  \int_{C_k} \pare{\frac{d(y,x_B)}{d(x,x_B)}}^{2s} \frac{1}{V(x,x_B)^2}d\mu(x) \\
&{\lesssim} (r_{B})^{2s}  \abs{ B(x_B,{2^{k}A_0r_{B}}) }^{-1} \int_{C_k}  \frac{d({x_B,x})^{-2s}}{V({x_B,x})}d\mu(x) \\
&\lesssim (r_{B})^{2s}  \abs{ B(x_B,{2^{k}A_0r_{B}}) }^{-1}  (2^{k}A_0r_{B})^{-2s} \\
&= {(A_0^{-1})^{2s}} \abs{ B(x_B,{2^{k}A_0r_{B}}) }^{-1}  (2^{k})^{-2s}. 
\end{align*}
Then, from the previous estimate 
\begin{align}
\norm{Ra}_{L^2(C_k)} &= \corc{\int_{C_k} \abs{ \int_{B} \left[ K(x,y)-K(x,x_B)\right]a(y)d\mu(y) }^2 d\mu(x)}^\frac12 \nonumber \\
&{\lesssim} \|a\|_{L^1} \abs{ B(x_B,{2^{k}A_0r_{B}}) }^{-\frac{1}{2}}  (2^{k})^{-s} \nonumber \\
&{\leq} |B(x_{B},r_{B})|^{1-\frac{1}{p}} \abs{ B(x_B,{2^{k}A_0r_{B}}) }^{-\frac{1}{2}}  (2^{k})^{-s} \nonumber \\
&{\leq [A'(2^kA_0)^\gamma ]^{\frac1p-1} \abs{B(x_B,2^kA_0r_B)}^{1-\frac1p} \abs{ B(x_B,{2^{k}A_0r_{B}}) }^{-\frac{1}{2}}  (2^{k})^{-s}} \nonumber \\
&{=[A'(2^kA_0)^\gamma ]^{\frac1p-1} \abs{B(x_B,2^kA_0r_B)}^{\frac12-\frac1p} (2^{k})^{-s}} \nonumber \\
&{\leq [A'(2^kA_0)^\gamma ]^{\frac1p-1} (A')^{\frac1p-\frac12} \abs{B(x_B,2^{k+1}A_0r_B)}^{\frac12-\frac1p} (2^{k})^{-s}} \nonumber \\
&{= [A'(A_0)^\gamma ]^{\frac1p-1} (A')^{\frac1p-\frac12} \abs{B(x_B,2^{k+1}A_0r_B)}^{\frac12-\frac1p} 2^{k(\gamma(\frac1p-1)-s)}} \label{eq:estimateRa1}
\end{align}

Now, we move on to the case where $ r_{B}\geq T $. First, note that if $ x\in C_k $ and $ y\in B $, then  
from quasi-triangle inequality we obtain
		\[
		2^{k}A_{0}r_{B}\leq d(x,x_B)\leq A_0(d(x,y)+d(y,x_B))< A_0 d(x,y)+A_{0}r_{B},
		\]
and then $ d(x,y)>(2^k-1)r_{B} $. Moreover, for every $z\in B(x_B,2^{k{+1}}A_{0}r_{B}) $, we have
		\begin{align*}
			d(z,y)\leq A_0(d(z,x_B)+d(x_B,y))< A_0^2r_{B}(2^{k+1}+A_0^{-1})&\leq
			\frac{2^{k+1}+1}{2^k-1}A_0^2 d(x,y)\\
			&=(2+\frac3{2^k-1}) A_0^2 d(x,y) \leq 5A_0^2 d(x,y).
		\end{align*}
		This means that $B(x_B, 2^{k+1}A_0r_B) \subset B(y,{5}A_0^2\,d(x,y))$ and hence
	\begin{align} \label{eq:estimate-volume-ball}
		|B(x_B, 2^{k+1}A_0r_B)| \leq |B(y,5A_0^2\,d(x,y))| \leq A'(5A_0^2)^{\gamma} |B(y,d(x,y))| = A'({5}A_0^2)^{\gamma} V(x,y). 
	\end{align}
	Going back to the estimate of (ii), since for this case the atoms do not necessarily satisfy vanishing moments, it follows by the inhomogeneous kernel condition \eqref{forteb}, Proposition \ref{prop:fv} and the estimate \eqref{eq:estimate-volume-ball}
		\begin{align}
			\norm{Ra}_{L^2(C_k)}
			&\leq \int_{B}\abs{a(y)}\corc{\int_{C_k} \abs{  K(x,y) }^2 d\mu(x)}^{\frac12}d\mu(y) \nonumber \\
			&\leq \int_{B}\abs{a(y)}\corc{\int_{C_k} \frac{d(x,y)^{-2\nu}}{V(x,y)^2} d\mu(x)}^{\frac12}d\mu(y) \nonumber \\
			&\lesssim  \frac{1}{\abs{B(x_B,2^{{k+1}}A_{0}{r_B})}^{\frac12}}\int_{B}\abs{a(y)}\corc{\int_{C_k} \frac{d(x,y)^{-2\nu}}{V(x,y)} d\mu(x)}^{\frac12}d\mu(y) \nonumber \\
			&{\leq} \frac{1}{\abs{B(x_B,2^{k+1}A_{0}r_{B})}^{\frac12}}\int_{B}\abs{a(y)}\corc{\int_{{(2^k-1)}r_{B}\leq d(x,y)} \frac{d(x,y)^{-2\nu}}{V(x,y)} d\mu(x)}^{\frac12}d\mu(y) \nonumber \\
			&\lesssim \frac{1}{\abs{B(x_B,2^{k+1}A_{0}r_{B})}^{\frac12}}\corc{{(2^k-1)r_{B}}}^{-\nu}\int_{B}\abs{a(y)}d\mu(y) \nonumber \\
			&{\leq} \frac{1}{\abs{B(x_B,2^{k+1}A_{0}r_{B})}^{\frac12}} \abs{B(x_B,r_{B})}^{1-\frac1p} (2^{k-1}T)^{-\nu} \nonumber \\
			& \leq 2^{\nu } T^{-\nu}[A'(2A_{0})^\gamma]^{\frac{1}{p}-1}  \, |B(x_B,2^{k+1}A_{0}r_{B})|^{\frac12-\frac1p} \, 2^{k \left(\gamma(\frac1p-1)-\nu\right)}. \label{eq:estimateRa2}
		\end{align}
Then, from estimates \eqref{eq:estimateRa1} and \eqref{eq:estimateRa2} we conclude that $\lambda_k=2^{k\left[\gamma(\frac1p-1)-\min\{\nu,s\}\right]}$ and it clearly satisfies \eqref{lambda-type-condition}.

In order to conclude  the proof, it remains to provide the estimate of the moment condition of $Ra$. Since it suffices to show it when $r_B<T$, by the vanishing condition on $a$
and from \eqref{eq:atomtomol3} we have
	\begin{align*}
		\abs{\int Ra(x)d\mu(x)}= 
		\abs{ \ip{ R^*(1)-(R^*1)_B,a } }
		&\leq \norm{a}_{L^2} \norm{  R^*(1)-m_{B,T}(R^*1) }_{L^2(B)}  \\
		& \leq \abs{B}^{\frac12-\frac1p} \norm{  R^*(1)-m_{B,T}(R^*1) }_{L^2(B)} \\
		& \leq C \abs{B(x_{B},T)}^{1-\frac1p}. 
	\end{align*}
\end{proof}

A natural question arises on how to guarantee a bounded extension of $R$ from $ h^{p}_{cw}(X)$ to $ h^{p}_{\#}(X)$ from Proposition \ref{prop:atomtomol3}. In fact, given $f\in h^{p}_{cw}(X)$ decomposed as $f=\sum_{j} \lambda_j a_j$ for local $(p,2)$-atoms and let us suppose that
\begin{equation}\label{rstrong}
Rf=\sum_{j} \lambda_j Ra_j \quad \quad \text{ in }  \ell^{\ast}_{\frac1p-1,T}(X).
\end{equation}
Then since $Ra_j$ is a multiple of $(p,2,T, \lambda)$-approximate molecule, with constant independent of $a_j$, using Corollary \ref{corollary:molecular-decomp} we can show that 
$$
\| Rf \|_{p,2} \lesssim \bigg( \sum_{j} |\lambda_j|^p \bigg)^{1/p} \approx \| f\|_{h^{p}_{cw}}.
$$
In the next theorem,  we replace \eqref{rstrong} assuming $ \norm{\cdot}_{h^{p,2}_{fin}}\approx \norm{\cdot}_{h^{p}_{cw}} $ in $ h^{p,2}_{fin}(X) $, i.e. the  norms in $h^{p,2}_{fin}(X)$ and $h^{p}_{cw}(X)$ are equivalents.

\begin{theorem}\label{teo:boundextenCZO} 
	Let $ R $ be an inhomogeneous Calder\'on-Zygmund operator of order $ (\nu,s) $ and $ \frac{\gamma}{\gamma+\min\lla{\nu,s}}< p<1 $. Suppose that there exists $C>0$ such that for any ball $B:=B(x_{B},r_{B})\subset X$ with $r_{B}<T$ we have that $f:=R^{\ast}(1)$ satisfies
		\begin{equation*}	
		\bigg( \fint_B \abs{f-f_B}^2 d\mu \bigg)^{1/2} \leq C \abs{B(x_B,T)}^{1-\frac1p} \abs{B(x_{B},r_{B})}^{\frac1p-1}. 
		\end{equation*}
	If  $ \norm{\cdot}_{h^{p,2}_{fin}}\approx \norm{\cdot}_{h^{p}_{cw}} $ in $ h^{p,2}_{fin}(X) $,
	then the operator $ R $ can be extended as a linear bounded operator from $ h^{p}_{cw}(X)$ to $ h^{p}_{\#}(X)$.
\end{theorem}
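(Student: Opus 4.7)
The plan is to establish the bound on the dense subspace $h^{p,2}_{fin}(X) \subset h^p_{cw}(X)$ and then use completeness of $h^p_{\#}(X)$ to extend by continuity. First, for any $f \in h^{p,2}_{fin}(X)$, write a finite atomic decomposition $f = \sum_{j=1}^{n} \lambda_j a_j$ with local $(p,2)$-atoms $a_j$, and simply take $Rf = \sum_{j=1}^{n} \lambda_j R a_j$ inside $L^2(X)$, which is unambiguous by linearity of $R$ and the $L^2$-boundedness of $R$ applied to compactly supported $L^2$ functions.

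Next, invoke Proposition \ref{prop:atomtomol3} with the hypothesis \eqref{eq:atomtomol3}: each $R a_j$ is a uniform constant multiple $c \geq 1$ of a $(p,2,T,\lambda)$-approximate molecule $M_j$ centered at $B(x_{B_j}, 2A_0 r_{B_j})$, where $\lambda = \{\lambda_k\}_k$ with $\lambda_k = 2^{k[\gamma(1/p-1) - \min\{\nu,s\}]}$. Here $c$ is independent of the atom and $\|\lambda\|_p < \infty$ precisely because $p > \gamma/(\gamma+\min\{\nu,s\})$, so condition \eqref{lambda-type-condition} holds. Applying Corollary \ref{corollary:molecular-decomp} to the finite sum $Rf = \sum_{j=1}^n (c\lambda_j) M_j$ yields $Rf \in h^p_{\#}(X)$ with
\begin{equation*}
\|Rf\|_{h^p_{\#}} \leq C_{A,p}\,\|\lambda\|_p \left(\sum_{j=1}^n |c\lambda_j|^p \right)^{1/p} \lesssim \left(\sum_{j=1}^n |\lambda_j|^p\right)^{1/p}.
\end{equation*}
Taking the infimum over all finite $(p,2)$-atomic representations gives $\|Rf\|_{h^p_{\#}} \lesssim \|f\|_{h^{p,2}_{fin}}$, and invoking the hypothesis $\|\cdot\|_{h^{p,2}_{fin}} \approx \|\cdot\|_{h^p_{cw}}$ on $h^{p,2}_{fin}(X)$ gives $\|Rf\|_{h^p_{\#}} \lesssim \|f\|_{h^p_{cw}}$.

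Finally, $h^{p,2}_{fin}(X)$ is dense in $h^p_{cw}(X)$ in its quasi-norm (since partial sums of any atomic decomposition $f = \sum_j \lambda_j a_j$ satisfy $\|f - \sum_{j=1}^N \lambda_j a_j\|_{h^p_{cw}}^p \leq \sum_{j>N} |\lambda_j|^p \to 0$), and $h^p_{\#}(X)$ is complete by Proposition \ref{prop:500}. Therefore the linear map $R: h^{p,2}_{fin}(X) \to h^p_{\#}(X)$ admits a unique bounded linear extension to all of $h^p_{cw}(X)$. The main technical burden sits entirely in Proposition \ref{prop:atomtomol3}, which the cancellation hypothesis \eqref{eq:atomtomol3} is tailored to supply; once that is in hand, the rest is a routine molecular-decomposition plus density argument. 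The one conceptual point to check is that the extension is consistent, i.e., independent of the approximating sequence chosen, which follows automatically from the $p$-norm estimate on the dense subspace together with the completeness of $h^p_{\#}(X)$.
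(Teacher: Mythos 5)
Your proposal is correct and follows essentially the same route as the paper: atoms are mapped to approximate molecules via Proposition \ref{prop:atomtomol3}, the uniform molecular bound (Proposition \ref{prop:unifboundmol} / Corollary \ref{corollary:molecular-decomp}) gives the estimate on $h^{p,2}_{fin}(X)$, the hypothesis $\norm{\cdot}_{h^{p,2}_{fin}}\approx\norm{\cdot}_{h^{p}_{cw}}$ transfers it to the $h^{p}_{cw}$-norm, and the operator is then extended by density and completeness of $h^{p}_{\#}(X)$. The only cosmetic difference is that the paper spells out the extension step concretely (defining $\tilde R f$ as the limit of $\sum_{j\le m}\lambda_j Ra_j$ and verifying independence of the decomposition by hand), whereas you invoke the standard extension-by-uniform-continuity argument, which is equivalent here since the $p$-norm estimate makes $R$ Lipschitz for the metric $d_{p,q}$.
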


\begin{proof}	
	Since $ R $ is a bounded linear operator on $ L^2(X) $, it is a well defined linear operator on $ h^{p,2}_{fin}(X) $. Then, 
	given $ f\in h^{p,2}_{fin}(X) $ with $ f=\sum_{j=1}^m \lambda_j a_j $, from Propositions \ref{prop:atomtomol3} and \ref{prop:unifboundmol} we have that 
	\begin{align}\label{eq:7520}
		\norm{Rf}_{{h^{p,2}_{\#}(X)}}&\leq \sum_{j=1}^m \abs{\lambda_j}\norm{Ra_j}_{{h^{p,2}_{\#}(X)}} \lesssim \pare{\sum_{j=1}^{m}  \abs{\lambda_j}^p}^{1/p}, 
	\end{align}
	where the implicit constant does not depend on $ f $. From the arbitrariness of the decomposition for $ f $
	and since $ \norm{\cdot}_{h^{p,2}_{fin}(X)} \approx \norm{\cdot}_{h^{p}_{cw}(X)} 
	$ on $ h^{p,2}_{fin}(X) $ we have
	\begin{equation}\label{eq:752}
		\norm{Rf}_{h^{p,2}_{\#}(X)}\lesssim \norm{f}_{h^{p}_{cw}(X)}, \quad \quad \forall  f\in h^{p,2}_{fin}(X).
	\end{equation}
	
	On the other hand, 
	given $ f\in h^{p}_{cw}(X) $ with
	$ f=\sum_{j=1}^{\infty} \lambda_j a_j $ in $ \ell^{\ast}_{1/p-1,T}(X)$ where $\left\{a_{j}\right\}_{j}$ are local $(p,2)$-atoms, it follows by \eqref{eq:7520} that the sequence of partial sums $\displaystyle{\left\{  \sum_{j=1}^{m}\lambda_j Ra_j \right\}_{m\in\N}}$ is a Cauchy sequence in $ h^{p,2}_{\#}(X) $, and hence it converges in $ h^{p,2}_{\#}(X) $. Thus, we can extend the operator $ R $ on $ h^{p}_{cw}(X) $ as
	\begin{equation}\label{eq:758}
		\tilde{R}(f):=\lim_{m\to \infty} \sum_{j=1}^m \lambda_j R(a_j), \quad \quad  \text{in} \,\, h^{p,2}_{\#}(X).
	\end{equation}
	Note that \eqref{eq:752} gives us the well definition of the extension $ \tilde{R} $. In fact, let $\displaystyle{f=\sum_{j=1}^{\infty} \lambda_j a_j=\sum_{j=1}^{\infty} \tilde\lambda_j \tilde{a}_j }$  in $ \ell^{\ast}_{1/p-1,T}(X)$, then
	
	\begin{align*}
		\Norm{\tilde{R}(f)- \sum_{j=1}^m \tilde{\lambda}_j R(\tilde{a}_j)}_{p,2}&\leq \Norm{ \tilde{R}(f)- \sum_{j=1}^n {\lambda}_jR{a}_j }_{p,2} + \Norm{ R\pare{\sum_{j=1}^n {\lambda}_j{a}_j} -  R\pare{\sum_{j=1}^m \tilde{\lambda}_j\tilde{a}_j}}_{p,2}\\
		&\lesssim \Norm{ \tilde{R}(f)- \sum_{j=1}^n {\lambda}_jR{a}_j }_{p,2} + \Norm{ \sum_{j=1}^n {\lambda}_j{a}_j -  \sum_{j=1}^m \tilde{\lambda}_j\tilde{a}_j}_{h^{p}_{cw}}\\
		&{\leq \Norm{ \tilde{R}(f)- \sum_{j=1}^n {\lambda}_jR{a}_j }_{p,2} + \Norm{ \sum_{j=1}^n {\lambda}_j{a}_j -f}_{h^{p}_{cw}}} + {\Norm{ f -  \sum_{j=1}^m \tilde{\lambda}_j\tilde{a}_j}_{h^{p}_{cw}},}
	\end{align*}
	for any $ m , n\in \N $, this shows the well definition of $ \tilde{R} $.
	Also, from
	\eqref{eq:752} we obtain
	\[
	\norm{\tilde{R}(f)}_{p,2} \lesssim \norm{f}_{h^{p}_{cw}}, \quad \quad \forall \, f\in h^{p}_{cw}(X). 
	\]
\end{proof}

We emphasize that condition $ \norm{\cdot}_{h^{p,2}_{fin}}\approx \norm{\cdot}_{h^{p}_{cw}} $ in Theorem \ref{teo:boundextenCZO} is used to show the boundedness and well definition of the extension of $ R $ in $ h^{p}_{cw}(X)$. This equivalence between norms was a condition used to extend bounded linear operators on local Hardy spaces in \cite[Proposition 7.1 and Theorem 7.4]{HeYangWen21}. In the latter work, the existence of a maximal characterization associated to the atomic decomposition in terms of local $(p,q)$-atoms plays a fundamental role.

\subsection{On local Hardy spaces $h^{p}(X)$}\label{sec:5.1}

In this section, we present the proof of Theorem \ref{teo1.2} as a direct consequence of relation between $h^p_{\#}(X)$ and $h^p(X)$ given by Theorem \ref{prop:comparison} and the results of previous section. We point out that in this section $R$ will denote an inhomogeneous Calder\'on-Zygmund operator of order $(\nu,s)$ for $ s\in (0,\eta] $, where $\eta$ is the same index of regularity considered in Section \ref{sec:rel}.

Next we restate the Theorem \ref{teo1.2} adding precisely details on the considered parameters in $h^p(X)$.

\begin{theorem}\label{cor:last}
	Let $ R $ be an inhomogeneous Calder\'on-Zygmund operator of order $ (\nu,s) $, $ \frac{\gamma}{\gamma+\min\lla{{\nu,s}}}< p<1 $, and $ \beta,\theta\in \, (\gamma(\frac1p-1),\eta) $ 
	. In regard $ h^{p}(X) $ as a subspace of $ (\mathcal{G}^\eta_0(\beta,\theta))^{\ast} $, if there exists $C>0$ such that for any ball $B(x_{B},r_{B})\subset X$ with $r_{B}<1$ we have that $f:=R^{\ast}(1)$ satisfies
	\begin{equation}\label{1.8b}
		\left(\fint_{B}|f-f_{B}|^{2} d\mu\right)^{1/2} \leq C \abs{B(x_{B},1)}^{1-\frac{1}{p}} \abs{B(x_{B},r_{B})}^{\frac{1}{p}-1},
	\end{equation}
	then the operator $ R $ defines a linear bounded operator on $ h^{p}(X)$.
\end{theorem}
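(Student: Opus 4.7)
The plan is to obtain the conclusion by concatenating three ingredients already available in the excerpt: the identification $h^p_{cw}(X)=h^p(X)$ from \cite{HeYangWen21}, the boundedness result from $h^p_{cw}(X)$ into $h^p_\#(X)$ given by Theorem \ref{teo:boundextenCZO} under the Campanato-type hypothesis on $R^*(1)$, and finally the identification $h^p_\#(X)=h^p(X)$ provided by Theorem \ref{prop:comparison}. Setting $T=1$ in the definition of $h^p_\#(X)$ matches the normalization used in hypothesis \eqref{1.8b}, so the two Campanato conditions are literally the same.

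The first step is to check that Theorem \ref{teo:boundextenCZO} applies. The hypothesis on $R^*(1)$ is exactly \eqref{1.8b}, and the assumption $\frac{\gamma}{\gamma+\min\{\nu,s\}}<p<1$ is preserved. The remaining condition, namely $\|\cdot\|_{h^{p,2}_{fin}}\approx\|\cdot\|_{h^p_{cw}}$ on $h^{p,2}_{fin}(X)$, is precisely the content of \cite[Proposition 7.1]{HeYangWen21} recalled at the end of Subsection \ref{sec:approxatom} (taking $q=2\in(p,\infty)\cap[1,\infty)$, which is allowed since $p<1$). Therefore Theorem \ref{teo:boundextenCZO} produces a bounded linear extension
\[
\widetilde{R}:h^p_{cw}(X)\longrightarrow h^p_\#(X),\qquad \|\widetilde{R}f\|_{h^p_\#}\lesssim \|f\|_{h^p_{cw}}.
\]

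The second step is to reinterpret the domain and codomain as $h^p(X)$. Since $\mu$ is a (Borel regular) doubling measure and $\beta,\theta\in(\gamma(\tfrac1p-1),\eta)$, the identifications $h^p_{cw}(X)=h^p(X)$ (stated after \cite[Proposition 4.12]{HeYangWen21} and recalled in the excerpt) and $h^p_\#(X)=h^p(X)$ (Theorem \ref{prop:comparison}) both hold with equivalent (quasi-)norms as subspaces of $(\mathcal{G}^\eta_0(\beta,\theta))^*$. Composing with these isomorphisms yields the required bounded extension of $R$ on $h^p(X)$.

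The main subtlety to verify is that all the identifications are compatible as subspaces of the common ambient space $(\mathcal{G}^\eta_0(\beta,\theta))^*$, so that the extension $\widetilde{R}$ produced by Theorem \ref{teo:boundextenCZO} indeed restricts to (and agrees with) the Calder\'on--Zygmund action coming from \eqref{czs} on test functions. This is handled by Remark \ref{rem:hpcwdistinG}, which records that elements of $\ell^*_{1/p-1,1}(X)$ (hence of both $h^p_{cw}(X)$ and $h^p_\#(X)$) naturally define distributions in $(\mathcal{G}^\eta_0(\beta,\theta))^*$; the construction of $\widetilde{R}$ in the proof of Theorem \ref{teo:boundextenCZO} via limits of finite atomic sums, together with the $L^2$-boundedness of $R$, ensures the agreement is consistent. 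Once this identification is in place, the chain above delivers $\|Rf\|_{h^p}\lesssim \|f\|_{h^p}$ for every $f\in h^p(X)$, completing the proof.
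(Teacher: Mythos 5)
Your proposal is correct and follows essentially the same route as the paper: the paper's proof likewise invokes $h^{p}(X)=h^{p,2}_{cw}(X)$ from \cite[Theorem 4.13]{HeYangWen21}, the equivalence $\norm{\cdot}_{h^{p,2}_{fin}}\approx\norm{\cdot}_{h^{p,2}_{cw}}$ from \cite[Proposition 7.1]{HeYangWen21}, and $h^{p,2}_{\#}(X)=h^{p}(X)$ from Theorem \ref{prop:comparison}, and then runs the argument of Theorem \ref{teo:boundextenCZO}. Your additional remark on the compatibility of the identifications inside $(\mathcal{G}^\eta_0(\beta,\theta))^{\ast}$ is a reasonable (and harmless) elaboration of a point the paper leaves implicit.
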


\begin{proof}
	Since $ h^{p}(X)=h^{p,2}_{cw}(X) $ (\cite[Theorem 4.13]{HeYangWen21}) with equivalent norms, 
	$ \norm{\cdot}_{h^{p,2}_{fin}}\approx \norm{\cdot}_{h^{p,2}_{cw}} $ in $ h^{p,2}_{fin}(X) $ by item (i) at \cite[Proposition 7.1]{HeYangWen21} 
	and $h^{p,2}_{\#}(X)=h^{p}(X) $ with equivalent norms as consequence of Proposition \ref{prop:comparison}, the proof follows the same argument as presented in the proof of Theorem \ref{teo:boundextenCZO}.
\end{proof}

Analogous to Theorem \ref{teo:boundextenCZO1} we state the following result:
	\begin{theorem}\label{teo:last1}
	Let $ R $ be an inhomogeneous Calder\'on-Zygmund operator of order $ (\nu,s) $, $ \frac{\gamma}{\gamma+\min\lla{{\nu,s}}}< p\leq 1 $, and $ \beta,\theta\in (\gamma(\frac1p-1),\eta)$. In regard $ h^{p}(X) $ as a subspace of $ (\mathcal{G}^\eta_0(\beta,\theta))^{\ast} $, then the operator $ R $ defines a linear bounded operator from $ h^{p}(X)$ to $ L^p(X) $.
\end{theorem}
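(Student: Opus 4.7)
The plan is to reduce the claim to a uniform $L^p$-bound on atoms and then exploit the two kernel estimates defining an inhomogeneous Calder\'on-Zygmund operator. Since $h^p(X) = h^p_{cw}(X)$ with equivalent norms (\cite[Theorem 4.13]{HeYangWen21}) and $\|\cdot\|_{h^{p,2}_{fin}} \approx \|\cdot\|_{h^{p}_{cw}}$ on $h^{p,2}_{fin}(X)$ by \cite[Proposition 7.1]{HeYangWen21}, it suffices to produce a constant $C>0$ such that $\|Ra\|_{L^p} \leq C$ for every local $(p,2)$-atom $a$, and then define the extension on $h^p(X)$ exactly as in the last part of the proof of Theorem~\ref{teo:boundextenCZO}, replacing $h^p_\#(X)$ by $L^p(X)$.

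Let $a$ be a local $(p,2)$-atom supported in $B = B(x_B,r_B)$ and set $B^\ast = B(x_B,2A_0 r_B)$. On the near region, H\"older's inequality and the $L^2$-boundedness of $R$ give
\[
\int_{B^\ast} |Ra|^p\, d\mu \;\leq\; |B^\ast|^{1-p/2}\|Ra\|_{L^2}^p \;\lesssim\; |B^\ast|^{1-p/2}\,\|R\|_{L^2\to L^2}^p\,|B|^{p/2-1} \;\lesssim\; 1,
\]
using $\|a\|_{L^2}\leq |B|^{1/2-1/p}$ and the doubling relation $|B^\ast|\approx |B|$.

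For the far region I partition $X\setminus B^\ast$ into the dyadic annuli $C_k = B(x_B,2^{k+1}A_0r_B)\setminus B(x_B,2^k A_0 r_B)$, $k\geq 1$, and recycle the $L^2(C_k)$-estimates already produced inside the proof of Proposition~\ref{prop:atomtomol3}. If $r_B<1$, the atom $a$ has vanishing mean, so the H\"older regularity \eqref{czstd1} of the kernel yields \eqref{eq:estimateRa1}; if $r_B\geq 1$, the strong size bound \eqref{forteb} (which is precisely the reason inhomogeneous kernels are used here) yields \eqref{eq:estimateRa2}. In either case one obtains
\[
\|Ra\|_{L^2(C_k)}\;\lesssim\; |B(x_B,2^{k+1}A_0 r_B)|^{\frac12-\frac1p}\, 2^{-k(\min\{\nu,s\}-\gamma(\frac1p-1))}.
\]
Applying H\"older once more on the annulus and using $|C_k|\leq |B(x_B,2^{k+1}A_0 r_B)|$,
\[
\int_{C_k}|Ra|^p\, d\mu \;\leq\; |C_k|^{1-p/2}\,\|Ra\|_{L^2(C_k)}^p \;\lesssim\; 2^{-kp(\min\{\nu,s\}-\gamma(\frac1p-1))}.
\]
Since $\min\{\nu,s\}>\gamma(\frac1p-1)$ by hypothesis, summing over $k\geq 1$ produces a convergent geometric series, giving $\int_{X\setminus B^\ast}|Ra|^p\,d\mu \lesssim 1$. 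Combining with the near-region estimate yields the uniform bound $\|Ra\|_{L^p}\lesssim 1$.

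The only conceptual subtlety is that local $(p,2)$-atoms supported on large balls ($r_B\geq 1$) have no cancellation available, so the Hölder regularity of $K$ is useless on those annuli; this is exactly where the inhomogeneous size condition \eqref{forteb} enters, turning the lack of moment into a harmless $r_B^{-\nu}$-type decay through the extra $d(x,y)^{-\nu}$ factor. The rest of the argument is routine atomic bookkeeping, and the case $p=1$ follows verbatim since the threshold $\gamma(\frac1p-1)$ vanishes and the summability condition reduces to the automatic $\min\{\nu,s\}>0$.
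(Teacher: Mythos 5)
Your proposal is correct and follows essentially the same route as the paper: the uniform bound $\|Ra\|_{L^p}\lesssim 1$ for local $(p,2)$-atoms is exactly Proposition \ref{prop:Lrnormatom} (proved there by the same splitting into $B^\ast$ and the annuli $C_k$, recycling \eqref{eq:atomtomol10}, \eqref{eq:estimateRa1} and \eqref{eq:estimateRa2}), and the passage from finite atomic sums to all of $h^{p}(X)$ via $\|\cdot\|_{h^{p,2}_{fin}}\approx\|\cdot\|_{h^{p}_{cw}}$ and the Cauchy-sequence extension is precisely the argument of Theorem \ref{teo:boundextenCZO} with $h^{p}_{\#}(X)$ replaced by $L^p(X)$. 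Your closing remark on the role of the inhomogeneous size condition \eqref{forteb} for large balls, including the $p=1$ case, matches the paper's treatment.
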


The proof \textit{is bis idem} the proof of Theorem \ref{teo:boundextenCZO} using the Proposition \ref{prop:Lrnormatom}. Note that $p=1$ is included in the statement of theorem, since $h^{1}(X)=h^{1,2}_{cw}(X)$ by \cite[Theorem 4.13]{HeYangWen21}
 with equivalent norms, 
	$ \norm{\cdot}_{h^{1,2}_{fin}}\approx \norm{\cdot}_{h^{1,2}_{cw}} $ in $ h^{1,2}_{fin}(X)$ by \cite[Proposition 7.1]{HeYangWen21}) and the conclusion follows by Proposition \ref{prop:Lrnormatom}.

\subsection{On Lebesgue spaces $ L^p(X)$.}\label{sec:5.2}
	
In this section we use the previous calculations to obtain the boundedness of inhomogeneous Calder\'on-Zygmund operator  from $ h^p_{cw}(X) $ to $ L^p(X)$, where any assumption on $R^{\ast}(1)$ is required.
	{The next result is a consequence of the proof of Proposition \ref{prop:atomtomol3}.
		\begin{proposition}\label{prop:Lrnormatom}
			Let $ R $ be  an inhomogeneous Calder\'on-Zygmund operator of order $ (\nu,s)$,  
			$ \frac{\gamma}{\gamma+\min\lla{\nu,s}} <p \leq 1$, and  
			$a$ be a local $(p,2)$-atom. Then  
 there exists a constant $ C>0 $, which does not depend on $ a $ (but it can depend on $T,A_0,A',\gamma,p,\nu, s$), such that
			$$ \norm{Ra}_{L^p}\leq C. $$
		\end{proposition}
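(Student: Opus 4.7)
The plan is to decompose $X = B^{\ast} \cup \bigcup_{k \geq 1} C_{k}$, where $B^{\ast}=B(x_B,2A_0 r_B)$ and $C_k = B(x_B,2^{k+1}A_0 r_B) \setminus B(x_B,2^k A_0 r_B)$, and to estimate $\|Ra\|_{L^p(X)}^p$ by controlling $\|Ra\|_{L^p}$ separately on the localized piece $B^\ast$ and on each annulus $C_k$. The key point is that all the hard kernel work has effectively been done already in the proof of Proposition~\ref{prop:atomtomol3}; here one only needs to pass from the $L^2$-estimates obtained there to $L^p$-estimates, which is a matter of H\"older's inequality.

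On the localized part, H\"older's inequality together with the $L^2$-boundedness of $R$ and the size condition $\|a\|_{L^2}\leq |B|^{1/2-1/p}$ give
\begin{align*}
\|Ra\|_{L^p(B^\ast)} &\leq |B^\ast|^{\frac1p-\frac12}\|Ra\|_{L^2(X)} \leq \|R\|_{L^2\to L^2}\, |B^\ast|^{\frac1p-\frac12} |B|^{\frac12-\frac1p} \lesssim 1,
\end{align*}
uniformly in $a$, where the implicit constant depends only on $A_0, A', \gamma, p$ and $\|R\|_{L^2\to L^2}$.

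For the annular part, I would invoke the bounds \eqref{eq:estimateRa1} and \eqref{eq:estimateRa2} already produced in the proof of Proposition~\ref{prop:atomtomol3}. These cover both regimes: when $r_B<T$ one exploits the vanishing moment of the local atom together with the H\"older regularity \eqref{czstd1} of the kernel, while when $r_B\geq T$ one uses the inhomogeneous size control \eqref{forteb}. In either case the conclusion is
\[
\|Ra\|_{L^2(C_k)} \lesssim |B(x_B,2^{k+1}A_0 r_B)|^{\frac12-\frac1p} \, 2^{k[\gamma(1/p-1)-\min\{\nu,s\}]},
\]
with constants independent of $a$. Applying H\"older's inequality once more,
\[
\|Ra\|_{L^p(C_k)} \leq |C_k|^{\frac1p-\frac12}\|Ra\|_{L^2(C_k)} \lesssim 2^{k[\gamma(1/p-1)-\min\{\nu,s\}]}.
\]

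Raising to the $p$-th power, summing, and combining with the estimate on $B^\ast$ yields
\[
\|Ra\|_{L^p(X)}^p \;\lesssim\; 1 + \sum_{k=1}^{\infty} 2^{kp[\gamma(1/p-1)-\min\{\nu,s\}]},
\]
and the assumption $p>\gamma/(\gamma+\min\{\nu,s\})$ is precisely what makes the exponent negative, so the series converges. I do not foresee any genuine obstacle; the only care required is to check that the $L^2$-bounds on $C_k$ from Proposition~\ref{prop:atomtomol3} go through uniformly with the geometric factor $\lambda_k=2^{k[\gamma(1/p-1)-\min\{\nu,s\}]}$ in both regimes $r_B<T$ and $r_B\geq T$, and to handle the endpoint $p=1$, in which the summability condition still reads $\min\{\nu,s\}>0$, so the argument is identical.
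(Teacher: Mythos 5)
Your proposal is correct and follows essentially the same route as the paper: the paper's proof likewise splits $\int|Ra|^p\,d\mu$ over $B^{*}$ and the annuli $C_k$, applies H\"older's inequality with exponent $2/p$ on each piece, and then invokes the bounds \eqref{eq:atomtomol10}, \eqref{eq:estimateRa1} (case $r_B<T$) and \eqref{eq:estimateRa2} (case $r_B\geq T$) before summing the geometric series, whose convergence is exactly the condition $p>\gamma/(\gamma+\min\{\nu,s\})$. Your remark that the endpoint $p=1$ causes no change is also consistent with the paper's treatment.
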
}
		\begin{proof}
			Let $a$ be a local $ (p,2)$-atom supported in $B=B(x_B,r_{B})$, $ B^{*}:=B(x_B,2A_{0}r_{B})$ and $C_k = B(x_{B},A_{0}2^{k+1}r_{B}) \setminus B(x_{B},A_{0}2^{k}r_{B})$ for $k \geq 1$. Then, using Holder's inequality we have	
			\begin{align*}
				\int \abs{Ra}^p d\mu &= \int_{B^*} \abs{Ra}^p d\mu +\sum_{k=1}^\infty \int_{C_k} \abs{Ra}^p d\mu\\
				& \leq \norm{Ra}_{L^2}^p \abs{B^*}^{1-\frac{p}2}+ \sum_{k=1}^\infty \abs{C_k}^{1-\frac{p}2} \norm{Ra}_{L^2(C_k)}^p.
			\end{align*}
			From \eqref{eq:atomtomol10}, \eqref{eq:estimateRa1} case $ r_B<T $, or \eqref{eq:estimateRa2} case $ r_B\geq T $, we obtain 
			\begin{align*}
				\int \abs{Ra}^p d\mu &\leq {[A'(2A_{0})^\gamma]^{\left(1-\frac{p}{2} \right)}} \norm{R}_{L^2}^p  \\
				&\quad + C^p\sum_{k=1}^\infty \abs{B(x_B,2^{k+1}A_0r_B)}^{\frac{p}2-1} 2^{k(\gamma(\frac1p-1)-\min\{\nu,s\})p} \abs{C_k}^{1-\frac{p}2} \\
				&\leq {[A'(2A_{0})^\gamma]^{\left(1-\frac{p}{2} \right)}} \norm{R}_{L^2}^p + C^p  \sum_{k=1}^\infty 2^{k(\gamma(\frac1p-1)-\min\{\nu,s\})p} \\
				& \lesssim 1,
			\end{align*}
			where $ C=\max\lla{ [A'(A_0)^\gamma ]^{\frac1p-1} (A')^{\frac1p-\frac12}, 2^{\nu } T^{-\nu}[A'(2A_{0})^\gamma]^{\frac{1}{p}-1}} $.
		\end{proof}

As a consequence, we obtain the following boundedness result.

	\begin{theorem}\label{teo:boundextenCZO1}
		Let $ R $ be an inhomogeneous Calder\'on-Zygmund operator of order $ (\nu,s) $ and $ \frac{\gamma}{\gamma+\min\lla{\nu,s}}< p<1 $. 
		If  $ \norm{\cdot}_{h^{p,2}_{fin}}\approx \norm{\cdot}_{h^{p,2}_{cw}} $ in $ h^{p,2}_{fin}(X) $,
		then the operator $ R $ can be extended as a linear bounded operator from $ h^{p}_{cw}(X)$ to $ L^p(X)$.
	\end{theorem}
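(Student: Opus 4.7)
The plan is to mimic the structure of the proof of Theorem \ref{teo:boundextenCZO}, but with the target space $L^p(X)$ in place of $h^{p}_{\#}(X)$, using Proposition \ref{prop:Lrnormatom} as the atom-level uniform bound instead of the atom-to-molecule Proposition \ref{prop:atomtomol3}. No hypothesis on $R^{*}(1)$ is needed here because the $L^p$ norm, unlike the $h^p_\#$ (quasi)norm, does not require any cancellation from $Ra$.

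First I would observe that $R$ is a well defined linear operator on $h^{p,2}_{fin}(X)$, since every element of $h^{p,2}_{fin}(X)$ lies in $L^2(X)$ and $R$ is bounded on $L^2(X)$. Next, given $f\in h^{p,2}_{fin}(X)$ with any finite decomposition $f=\sum_{j=1}^m \lambda_j a_j$ into local $(p,2)$-atoms, the $p$-subadditivity of $\|\cdot\|_{L^p}^p$ together with Proposition \ref{prop:Lrnormatom} gives
\begin{equation*}
\|Rf\|_{L^p}^p \leq \sum_{j=1}^m |\lambda_j|^p \|Ra_j\|_{L^p}^p \leq C^p \sum_{j=1}^m |\lambda_j|^p,
\end{equation*}
with $C$ independent of the atoms and of the decomposition. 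Taking the infimum over all such finite decompositions yields $\|Rf\|_{L^p}\lesssim \|f\|_{h^{p,2}_{fin}}$, and invoking the hypothesis $\|\cdot\|_{h^{p,2}_{fin}}\approx \|\cdot\|_{h^{p,2}_{cw}}=\|\cdot\|_{h^{p}_{cw}}$ on $h^{p,2}_{fin}(X)$ we conclude
\begin{equation*}
\|Rf\|_{L^p}\lesssim \|f\|_{h^{p}_{cw}}, \qquad \forall\, f\in h^{p,2}_{fin}(X).
\end{equation*}

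The final step is the extension by density. For $f\in h^{p}_{cw}(X)$ with decomposition $f=\sum_{j=1}^\infty \lambda_j a_j$ in $\ell^{\ast}_{1/p-1,T}(X)$, the partial sums $S_m:=\sum_{j=1}^m \lambda_j a_j$ lie in $h^{p,2}_{fin}(X)$ and form a Cauchy sequence in $h^{p}_{cw}(X)$ (by the $p$-subadditivity and convergence of $\sum|\lambda_j|^p$). By the previous inequality, $\{RS_m\}_m$ is then Cauchy in $L^p(X)$, which is complete, so we may define $\tilde R f := \lim_{m\to\infty} RS_m$ in $L^p(X)$. The main routine obstacle, as in Theorem \ref{teo:boundextenCZO}, is showing the independence of $\tilde R f$ from the particular decomposition; this is handled by the same three-term triangle inequality argument: given two decompositions $f=\sum_j\lambda_ja_j=\sum_j\tilde\lambda_j\tilde a_j$, one writes
\begin{equation*}
\Bigl\|\tilde R f-\sum_{j=1}^m \tilde\lambda_j R\tilde a_j\Bigr\|_{L^p} \leq \Bigl\|\tilde R f - \sum_{j=1}^n \lambda_j R a_j\Bigr\|_{L^p} + C\Bigl\|\sum_{j=1}^n\lambda_j a_j - f\Bigr\|_{h^{p}_{cw}} + C\Bigl\|f - \sum_{j=1}^m\tilde\lambda_j \tilde a_j\Bigr\|_{h^{p}_{cw}},
\end{equation*}
and lets $m,n\to\infty$. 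Passing to the limit in the uniform bound produces $\|\tilde R f\|_{L^p}\lesssim \|f\|_{h^{p}_{cw}}$, which is the desired bounded extension.
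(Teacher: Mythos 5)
Your proposal is correct and is essentially the paper's own proof: the paper simply states that one uses Proposition \ref{prop:Lrnormatom} in place of the atom-to-molecule step and then repeats the argument of Theorem \ref{teo:boundextenCZO}, which is exactly the uniform atom bound, $p$-subadditivity, the norm-equivalence hypothesis, and the density/well-definedness argument you spell out. No discrepancies worth noting.
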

	\begin{proof}
		Using the Proposition \ref{prop:Lrnormatom}, the proof follows the same lines in the proof of Theorem \ref{teo:boundextenCZO}. 
	\end{proof}

\section{The case $ p=1 $} \label{sec:p=1}

In this section, we present a version of Theorem \ref{cor:last} for $ p=1 $. First, note that the definition of $h^{p}_{\#}(X)$ does not cover the case $p=1$ since the convergence of the atomic series is not defined. We start by considering atoms with appropriate cancellation condition.

\begin{definition} \label{def:1q-function}
	Let $1<q \leq \infty$. We say that a $ \mu $-measurable function $ a $ is a \textit{$(1,q,T)$-approximate atom} if it satisfies the usual support and size condition of Definition \ref{def:pq-function} with $p=1$ and
		\begin{equation}\label{momentop1}
	\abs{ \int a \, d\mu }\leq \frac{2}{\log(2+T/r_B)}.\\
\end{equation}
\end{definition}
Atoms satisfying these approximate moment conditions were considered in \cite{DafniMomYue16}. In the same way as the case $ p<1 $, condition \eqref{momentop1} is just a local requirement when $ r_B<T$, since from the support and size conditions we have for $ r_B\geq T $
$$ \abs{ \int a \, d\mu }\leq  \|a\|_{L^{q}} |B(x_{B},r_{B})|^{\frac{1}{q'}}\leq 1 \leq \frac{2}{\log(2+T/r_B)}.$$
Also, with the same proof presented in Remark \ref{rem:approx_atoms} item (ii), we can show that each $(1,q,T)$-approximate atom is a multiple constant of a $(1,q,T^\prime)$-approximate atom for any $ T,T^\prime >0$.

The moment condition \eqref{momentop1} is more restricted than \eqref{momento} when $ p=1 $ with $r_{B}<T$. For more details on this condition we refer  \cite{DafniPicon22, DafniPicon23, DafniMomYue16}.

Now, the convergence of atomic series will be in the dual of the local $bmo(X)$. We recall that $bmo(X)$ is defined as the space of functions $ f $ in $ L^1_{loc}(X)$ such that
\[
\norm{f}_{bmo}:= \|f^{\ast}\|_{L^{\infty}}<\infty,
\] 
where $\displaystyle{f^\ast(x):= \sup_{B \ni x } \mathfrak{M}_{0,1,T}^B(f)}$. Clearly $bmo(X)=c_{0,q,T}(X)$ for any $1\leq q <\infty$, as a consequence of Lemma 6.1 in \cite{DafniMomYue16}, and $(bmo(X), \norm{\cdot}_{bmo}) $ is a normed space where each $(1,q,T)$-approximate atom defines a continuous linear functional with dual $ bmo^\ast(X)$-norm uniform (see \cite[Remarks 7.4]{DafniMomYue16}). This allows us to establish an analogous result to Proposition \ref{prop:414} for $p=1$ and $1< q \leq \infty$ defining $h^{1,q}_\#(X)$ as the set of elements $ g\in bmo^*(X) $ for which there exist a sequence $ \lla{a_j}_j $ of $ (1,q,T) $-approximate atoms and a sequence $\lla{\lambda_j}_j\in \ell^1(\C)$ such that 
\begin{equation}\label{eq:1qatom}
	g=\sum_{j=0}^{\infty} \lambda_j a_j, \quad \text{in } bmo^{\ast}(X),
\end{equation}
with quasi-norm
\[
\norm{g}_{1,q}:=\inf \left\{ \sum_j \abs{\lambda_j}  \right\},
\]
where the infimum is taken over all such atomic representations \eqref{eq:1qatom} of $g$. As before, $ d_{1,q}\pare{g,h}:=\|g-h\|_{1,q}$ defines a metric in $h^{1,q}_{\#}(X)$ making the space complete. 

Adapting the proof of Proposition \ref{prop:502}, we have $h^{1,q}_\#(X)= h^{1,\infty}_\#(X)$ for \textcolor{olive}{$ q\in (1,\infty) $} with equivalent norms, assuming $ \mu $ as a Borel regular measure. We denote by $ h^{1,q}_{fin,\#}(X) $ the subspace of $bmo^{\ast}(X)$ consisting of all finite linear combinations of $(1,q,T)$-approximate atoms, which is dense in $ (h^{1,q}_{\#}(X),d_{1,q}) $.

In the next definition, we consider the molecular structure of $h^1_\#(X)$, as an extension of Definition \ref{def:appmol} for the case $p=1$.

\begin{definition}\label{def:appmolp1}
	Let $1< q \leq \infty$ and $\lambda := \{\lambda_k\}_{k\in \N} \subset [0,\infty)$ satisfying
	\begin{equation} \label{lambda-type-conditionp1}
		\left\| \lambda \right\|_{1}:= \sum_{k=1}^{\infty} k\lambda_k < \infty.
	\end{equation}
	A measurable function $ M $ in $ X $	 
	is called a $(1,q,T,\lambda)$-approximate molecule if there exists a ball $B=B(x_B,r_{B})\subset X$ such that the size conditions (i) and (ii) in Definition \ref{def:appmol} with $p=1$ are satisfied and moreover the following cancellation condition holds
	\begin{equation} \label{moments-molecule-1}
	\displaystyle 	\abs{ \int M \, d\mu }\leq \frac{2}{\log(2+T/r_B)}.
	\end{equation}
\end{definition}

Again, up to a multiplication by a constant, the moment condition \eqref{moments-molecule-1} for molecules is also local since when $ r_B\geq T $ we have $\displaystyle{1\leq \frac{2}{\log(2+T/r_B)}}$ and then
\begin{align*}
	\abs{ \int M \, d\mu } \leq |B|^{1-\frac1q} \| M \, \1_{B} \|_{L^q} + \sum_{k=1}^{\infty} \lambda_k \, |A_k|^{1-\frac1q} \| M \, \1_{A_k}  \|_{L^q} 
	&\leq \big(1+ \sum_{k=1}^{\infty} \lambda_k \big)\frac{2}{\log(2+T/r_B)}.
\end{align*}
Moreover,  each $ (1,q,T,\lambda) $-approximate molecule $M$ centered in $B$ defines a distribution on $ bmo(X) $. In effect, from Corollary 3.3 in \cite{DafniMomYue16}, the same argument employed to prove \eqref{distri} shows that
	\begin{equation}\label{distrip1}
	\|M\|_{bmo^\ast(X)} \leq C(A')^{j_0(1-\frac1q)}(1+\sum_{j=1}^{\infty}\lambda_{j}),
\end{equation}
for some  $ j_0\in \N\cup \{0\} $ such that $ 2^{j_0}r_B\geq T $. 

Next, we state the molecular decomposition of $h^1_\#(X)$. Since its proof makes use of the same idea of the proof of Proposition \ref{prop:unifboundmol}, just taking $p=1$ and $1<q\leq \infty$ with the appropriate moment condition \eqref{momentop1}, we omit the details.

\begin{proposition}\label{prop:unifboundmolp1} 
	Let $1< q\leq \infty $ and $ M $ be	a $(1,q,T,\lambda)$-approximate molecule. Then there exist a sequence $\{ \beta_{j} \}_{j} \in \ell^{1}(\C) $ and $\{ a_{j} \}_{j}  $  of $ (1,q,T)$-approximate atoms such that
	\begin{equation}\label{eq:mol5p1}
		M=\sum_{j=0}^\infty \beta_j a_j, \quad \quad \text{in} \,\,\, L^{q}(X) 
	\end{equation}
	with $ \displaystyle{\sum_j\abs{\beta_j}\leq C_{A}\| \lambda \|_{{1}}} $. Moreover,  the convergence of \eqref{eq:mol5p1} is in $ bmo^\ast(X) $ and $\| M \|_{{1,q} }\leq C_{A,A^\prime} (1+\| \lambda \|_{{1}})$.
\end{proposition}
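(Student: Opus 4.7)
The plan is to transcribe the proof of Proposition \ref{prop:unifboundmol} to the case $p=1$, replacing the $\ell^p$-bookkeeping by $\ell^1$-bookkeeping and the approximate moment condition by the logarithmic one from \eqref{momentop1}. Keeping the notation $A_0 := B$, $A_k := 2^k B \setminus 2^{k-1}B$, $\chi_k := |2^kB|^{-1}\1_{2^kB}$, $\widetilde{m}_k := \int M\1_{A_k}\,d\mu$, and $N_j := \sum_{k\geq j}\widetilde{m}_k$, I would first decompose
\begin{equation*}
M = \sum_{k=0}^{\infty} M_k + \chi_0 N_0 + \sum_{k=0}^{\infty}\sum_{j=k+1}^{\infty} b_{k,j},
\end{equation*}
where $M_k := M\1_{A_k} - \chi_k\widetilde{m}_k$ and $b_{k,j} := (\chi_{k+1}-\chi_k)\widetilde{m}_j$, exactly as in Proposition \ref{prop:unifboundmol}.

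The size estimates $\|M_k\|_{L^q} \leq 2\lambda_k |2^kB|^{1/q-1}$ and $\|b_{k,j}\|_{L^q} \leq 2A'\lambda_j|2^{k+1}B|^{1/q-1}$ carry over verbatim, since they only rely on conditions (i)--(ii) of Definition \ref{def:appmolp1}. Since each $M_k$ and $b_{k,j}$ has vanishing integral, and any ordinary $(1,q)$-atom is automatically a $(1,q,T)$-approximate atom (because \eqref{momentop1} is trivially satisfied when the integral vanishes), these pieces contribute coefficients $2\lambda_k$ and $2A'\lambda_j$ respectively. The one genuinely new point concerns $\chi_0 N_0 = |B|^{-1}\bigl(\int M\,d\mu\bigr)\1_B$: repeating the size computation from Proposition \ref{prop:unifboundmol} with $p=1$ yields
\begin{equation*}
\|\chi_0 N_0\|_{L^q} \leq |B|^{1/q-1}\biggl(1 + \sum_{j\geq 1}\lambda_j\biggr),
\end{equation*}
while by \eqref{moments-molecule-1} one has $|\int \chi_0 N_0\,d\mu| = |\int M\,d\mu| \leq 2/\log(2+T/r_B)$. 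Setting $\alpha := 1 + \sum_{j\geq 1}\lambda_j \geq 1$, the function $\chi_0 N_0/\alpha$ satisfies the size bound of a $(1,q)$-atom, and its moment remains bounded by $2/\log(2+T/r_B)$ since dividing by $\alpha \geq 1$ can only decrease it; hence $\chi_0 N_0/\alpha$ is a $(1,q,T)$-approximate atom.

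Summing the absolute values of coefficients then gives
\begin{equation*}
\sum_j |\beta_j| \leq 2\sum_{k\geq 0}\lambda_k + 2A'\sum_{k=0}^{\infty}\sum_{j=k+1}^{\infty} \lambda_j + \alpha \leq C_{A,A'}\,(1 + \|\lambda\|_1),
\end{equation*}
where the double sum reorders as $\sum_{j\geq 1} j\lambda_j = \|\lambda\|_1$; this is precisely where the stronger summability assumption \eqref{lambda-type-conditionp1} is used, in analogy with how $\sum_j j\lambda_j^p$ entered the $p<1$ argument. The $L^q$-convergence of \eqref{eq:mol5p1} then follows as in Proposition \ref{prop:unifboundmol} from the size bounds on $M_k$ and $b_{k,j}$, and the $bmo^{\ast}(X)$-convergence of the tail is handled by the estimate \eqref{distrip1}, which plays here the role formerly played by \eqref{distri} in the $\ell^{\ast}_{1/p-1,T}$-setting. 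The main obstacle is purely bookkeeping: one must verify that the $j=0$ contribution $\|M\1_B\|_{L^q}|B|^{1/q'}\leq 1$ to the estimate of $\chi_0 N_0$ does not spoil the uniform control, but it is absorbed into $\alpha$ at no cost, and the logarithmic moment inherited by $\chi_0 N_0$ from (iii) requires no further manipulation.
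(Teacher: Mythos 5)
Your proposal is correct and follows exactly the route the paper intends: the paper omits this proof entirely, stating only that it repeats Proposition \ref{prop:unifboundmol} with $p=1$ and the logarithmic moment condition \eqref{momentop1}, and your write-up is a faithful execution of that transcription (same decomposition into $M_k$, $\chi_0N_0$ and $b_{k,j}$, same size bounds, with the only new point being that $\chi_0N_0/\alpha$ inherits the logarithmic moment bound). The bound you obtain, $C_{A,A'}(1+\|\lambda\|_1)$, matches the second estimate in the statement and is the correct one.
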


In the same way, we state a version of Proposition \ref{prop:atomtomol3} for $p=1$.

\begin{proposition}\label{prop:atomtomol3p1}
Let $ R $ be an inhomogeneous Calder\'on-Zygmund operator of order $ (\nu,s) $. Suppose  there exists $C>0$ such that for any ball $B:=B(x_{B},r_{B})\subset X$ with $r_{B}<T$ we have that $f:=R^{\ast}(1)$ satisfies 
		\begin{equation}\label{eq:atomtomol3p1}
			\bigg( \fint_B \abs{f-f_B}^2 d\mu \bigg)^{1/2} \leq C \frac{2}{\log(2+{T}/{r_B})}. 
		\end{equation}
If $a$ is a local $ (1,2)$-atom supported in $ B(x_B,r_B)$, then $ Ra $ is a multiple constant of a $ (1,2,T, \lambda)$-approximate molecule centered in $ B(x_B,2A_0r_B) $, for 
some $ \lambda $ satisfying \eqref{lambda-type-conditionp1}.
\end{proposition}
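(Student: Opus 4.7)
The plan is to mirror the strategy of Proposition \ref{prop:atomtomol3}, adapting each of the three molecular conditions to $p=1$ and the refined logarithmic moment bound \eqref{eq:atomtomol3p1}. Throughout, let $a$ be a local $(1,2)$-atom with $\supp(a)\subset B=B(x_B,r_B)$, set $B^\ast=B(x_B,2A_0r_B)$, and define the annuli $C_k=B(x_B,2^{k+1}A_0r_B)\setminus B(x_B,2^kA_0r_B)$ for $k\geq 1$.

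\textbf{Step 1 (size in $B^\ast$).} As in \eqref{eq:atomtomol10}, the $L^2$-boundedness of $R$ gives
\[
\norm{Ra\,\1_{B^\ast}}_{L^2}\leq \norm{R}_{L^2}\norm{a}_{L^2}\leq [A'(2A_0)^\gamma]^{1/2}\norm{R}_{L^2}\,\abs{B^\ast}^{-1/2},
\]
which is exactly condition (i) in Definition \ref{def:appmolp1} with $p=1$ up to a harmless multiplicative constant.

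\textbf{Step 2 (annular size).} The computations leading to \eqref{eq:estimateRa1} (case $r_B<T$, using $\int a\,d\mu=0$ and the H\"older regularity \eqref{czstd1} of the kernel) and \eqref{eq:estimateRa2} (case $r_B\geq T$, using the inhomogeneous bound \eqref{forteb}) go through verbatim; the only change is substituting $p=1$, so the factor $\gamma(1/p-1)$ vanishes. The outcome is
\[
\norm{Ra}_{L^2(C_k)}\leq C\,\abs{B(x_B,2^{k+1}A_0r_B)}^{-1/2}\,2^{-k\min\{\nu,s\}},
\]
with $C=C(A_0,A',\gamma,\nu,s,T,\norm{R}_{L^2})$. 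Hence condition (ii) holds with $\lambda_k:=2^{-k\min\{\nu,s\}}$. Since $\min\{\nu,s\}>0$, the normalization $\sum_k k\lambda_k<\infty$ required by \eqref{lambda-type-conditionp1} is immediate.

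\textbf{Step 3 (moment condition).} This is the only genuinely new piece, and I split it in two subcases. When $r_B<T$, the vanishing moment of $a$ gives
\[
\int Ra\,d\mu=\ip{R^\ast(1)-f_B,a}
\]
with $f_B=\fint_B R^\ast(1)\,d\mu$, well defined by Proposition \ref{prop:well-def-T*1}. Cauchy--Schwarz, the size of the atom, and the hypothesis \eqref{eq:atomtomol3p1} yield
\[
\abs{\int Ra\,d\mu}\leq \norm{a}_{L^2}\norm{R^\ast(1)-f_B}_{L^2(B)}\leq \abs{B}^{-1/2}\cdot\abs{B}^{1/2}\left(\fint_B\abs{R^\ast(1)-f_B}^2d\mu\right)^{1/2}\leq \frac{2C}{\log(2+T/r_B)}.
\]
When $r_B\geq T$, one has $\log(2+T/r_B)\leq \log 3$, so it suffices to bound $\abs{\int Ra\,d\mu}$ by an absolute constant; using Steps 1 and 2 and Cauchy--Schwarz annulus by annulus,
\[
\abs{\int Ra\,d\mu}\leq \int_{B^\ast}\abs{Ra}\,d\mu+\sum_{k\geq 1}\int_{C_k}\abs{Ra}\,d\mu\leq C\Bigl(1+\sum_{k\geq 1}\lambda_k\Bigr)\leq C',
\]
and $C'\leq C'\tfrac{\log 3}{2}\cdot\tfrac{2}{\log(2+T/r_B)}$, establishing (iii) with a uniform constant.

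Combining the three steps, $Ra$ is a bounded multiple (depending only on $A_0,A',\gamma,T,\nu,s,\norm{R}_{L^2}$ and $C$ in \eqref{eq:atomtomol3p1}, but not on $a$) of a $(1,2,T,\lambda)$-approximate molecule centered in $B^\ast$. I expect the only mildly subtle point to be the $r_B\geq T$ case of (iii): the hypothesis \eqref{eq:atomtomol3p1} is not available there, so one must fall back on the $L^1$-control coming from the molecular size bounds (equivalently from Proposition \ref{prop:well-def-T*1}); fortunately the right-hand side $2/\log(2+T/r_B)$ is bounded below by a positive constant in that regime, which is exactly what makes the argument close.
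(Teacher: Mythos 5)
Your proposal is correct and follows exactly the route the paper intends: the paper itself omits the proof of this proposition, remarking only that one repeats the argument of Proposition \ref{prop:atomtomol3} with $p=1$ (so that $\lambda_k=2^{-k\min\{\nu,s\}}$ and \eqref{lambda-type-conditionp1} reduces to $\min\{\nu,s\}>0$) and replaces the Campanato-type bound by the logarithmic one in the moment estimate, which is precisely what you do in Steps 1--3. Your handling of the $r_B\geq T$ case of the moment condition via the lower bound $2/\log(2+T/r_B)\geq 2/\log 3$ matches the paper's observation, made right after Definition \ref{def:appmolp1}, that the cancellation condition for $(1,q,T,\lambda)$-molecules is automatic in that regime.
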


We emphasize the sequence $\lambda=\left\{ \lambda_{k} \right\}_{k} $ announced at last result is exactly the same found in the proof of Proposition \ref{prop:atomtomol3} taking  $p=1$, namely $\lambda_k:=2^{-k\min\{\nu,s\}} $.

Let $h^{1}_{cw}(X)$ be the set of distributions $f\in bmo^{\ast}(X)$ such that 
$$\displaystyle{ f=\sum_{j=1}^\infty \lambda_j a_j}, \quad  \text{in} \,\,  bmo^{\ast}(X),$$
for some  $\lla{\lambda_j}_{j} \in \ell^1(\C)$ and  $\{a_j\}_{j}$  local $(1,q,T)$-atoms, equipped with the norm 
$\norm{ f }_{h^{1}_{cw}}:= \inf \pare{\sum_{j=1}^{\infty}  \abs{\lambda_j}},$
where the infimum is taken over all such decompositions. Analogously as $h^{1}_{\#}(X)$, the space $h^{1}_{cw}(X)$  does not depend on $1< q \leq \infty$, 
assuming $ \mu $ is Borel regular. We use the notation $ h^{1,q}_{cw}(X)$ to emphasize the type of local $ (1,q,T)$-atoms considered.    
In the similar way, we denote by $ h^{1,q}_{fin}(X) $ the set of finite linear combinations of local $ (1,q,T)$-atoms.

Now, we make a comparison between the spaces $h^{1}_{\#}(X)$ and $h^{1}_{cw}(X)$ with the local Hardy space considered in \cite{DafniMomYue16}, that we denote by $ h^1_g(X) $ in this work. Mac\'ias and Segovia in \cite{MaSe79} and \cite{MaSe79b} showed  the existence of a quasi-metric $\rho$ equivalent to $d$ (i.e. $c_{1},c_{2}>0$ such that $c_{1}\rho(x,y)\leq d(x,y)\leq c_{2}\rho(x,y)$ for all $x,y \in X$) satisfying the following property:  there exist $ \alpha\in (0,1) $ and a constant $ C_d>0 $ such that for all $ x\in X $ and $ r>0 $
\begin{equation}\label{eq:holqm}
\abs{\rho(y,x)-\rho(z,x)} \leq C_d r^{1-\alpha}\rho(y,z)^\alpha
\end{equation}
whenever $ y,z\in B_{\rho}(x,r) $. The advantage is that $\mu(B_{d}(x,t)) \approx \mu(B_{\rho}(x,t)) $ and now balls are open. 
From now on, we consider $(X,d,\mu)$ a homogeneous type space where $d$ satisfies the condition \eqref{eq:holqm}.

We say that a function $ f\in L^1_{loc}(X) $ belongs to $ h^1_g(X) $ when
$
\norm{f}_{h^1_g}:=\norm{\M_\F f}_{L^1}<\infty,
$
where
\[
\M_\F f(x):= \sup_{\psi\in \F_x} \abs{ \int f\psi d\mu  }
\]
and $\F_x $ means the set of $\alpha $-H\"{o}lder continuous functions $\psi$   supported in a ball $ B(x,t) $, $ 0<t< 4A_0^2\,T$ satisfying 
\begin{equation}\label{eq:224}
\norm{\psi}_\infty \leq \frac{C_\F}{\abs{B(x,t)}} \qquad \text{and} \qquad \norm{\psi}_{\mathcal{L}^\alpha} \leq \frac{C_\F}{t^\alpha\abs{B(x,t)}}
\end{equation}
for some positive constant $ C_\F $. Here $ \alpha $ is the same constant appearing in \eqref{eq:holqm}. The space $h^{1}_{g}(X)$ is complete and continuously embedded in $L^{1}(X)$.  

In \cite{DafniMomYue16}, the authors proved an atomic decomposition, namely if $f\in h^{1}_{g}(X)$ then there exist a sequence of local $(1,\infty,T)$-atoms $\left\{a_{j}\right\}_{j}$ and a sequence of coefficients $\left\{\lambda_{j}\right\}_{j}$ in $\ell^{1}(\C)$ such that 
$$f=\sum_{j=1}^{\infty} \lambda_{j}a_{j}, \quad \text{with} \quad \sum_{j=1}^{\infty}|\lambda_{j}|\leq C\|f\|_{h^{1}_{g}}, $$    
where $C>0$ is independent of $f$. Conversely, if  $\left\{\lambda_{j}\right\}_{j}$ is a sequence in $\ell^{1}(\C)$ and  $\left\{a_{j}\right\}_{j}$ are local $(1,\infty,T)$-atoms (or approximate atoms) then $\sum_{j=1}^{\infty} \lambda_{j}a_{j}$ converges in $h^{1}_{g}(X)$ and 
$\|\sum_{j=1}^{\infty} \lambda_{j}a_{j}\|_{h^{1}_{g}}\leq \sum_{j=1}^{\infty} |\lambda_{j}| $.
 They also proved that $bmo(X)$ can be identified with dual of $h^{1}_{g}(X)$, i.e. each $\vp \in \,bmo(X)$ defines a bounded linear functional $\Lambda$ on $h^{1}_{g}(X)$ with
\begin{equation}\label{repre}
\Lambda(\vp)=\int f \vp d\mu, 
\end{equation}
for any $ f $ in a dense subset of $h^{1}_{g}(X)$ and $\|\Lambda\| \approx \|\vp\|_{bmo}$. Conversely, each $\Lambda \in (h^{1}_{g})^\ast(X) $ can be represented by a function $\varphi \in \, bmo(X)$, denoted by $\Lambda_{\vp}$, in the sense of \eqref{repre}. Clearly, each $(1,q,T)$-approximate atom can be paired with a function $\vp \in bmo(X)$ as follows (next $ B $ is the ball containing the support of the atom $a$)
\begin{align*}
\left| \int a \vp d\mu \right| &\leq \left| \int a \left(\vp-c_{B}\right) d\mu \right|+|c_{B}|\left| \int a d\mu \right| \\
& \leq  \|a\|_{L^{q}}\left(\int_{B} \left|\vp-c_{B}\right|^{q'} d\mu \right)^{1/q} +|c_{B}|\left| \int a d\mu \right| \\
& \leq  \left(\fint \left|\vp-c_{B}\right|^{q'} d\mu \right)^{1/q} + \frac{2|c_{B}|}{\log(2+T/r_B)} \\
& \leq 3  \|\vp\|_{bmo},
\end{align*}
where the constant $c_{B}$ and the last inequality follow from Lemma 6.1 in \cite{DafniMomYue16}. Summarizing $bmo(X)={(h^1_g)^\ast(X)}$ by \cite[Corollary 7.8]{DafniMomYue16}.

\begin{proposition}\label{prop:2003}
	$ h^1_\#(X) = h^1_g(X)$ with equivalent norms.
\end{proposition}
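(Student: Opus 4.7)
The plan is to exploit that both spaces admit atomic decompositions using essentially the same class of building blocks, and that the natural containments $h^1_g(X)\hookrightarrow L^1(X)\hookrightarrow bmo^\ast(X)$ let us pass between the two relevant modes of convergence. We will verify each inclusion separately and then combine the norm estimates.

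For the inclusion $h^1_g(X)\subseteq h^1_\#(X)$, let $f\in h^1_g(X)$. By the atomic decomposition theorem from \cite{DafniMomYue16}, we may write $f=\sum_j \lambda_j a_j$ with local $(1,\infty,T)$-atoms $a_j$ and $\sum_j|\lambda_j|\lesssim \norm{f}_{h^1_g}$, with convergence in $h^1_g$-norm (hence in $L^1(X)$). Each local $(1,\infty,T)$-atom automatically satisfies the approximate moment condition \eqref{momentop1} (since its integral vanishes when $r_B<T$, and is controlled by $\norm{a_j}_{L^\infty}|B|\leq 1$ when $r_B\geq T$), so the $a_j$ are $(1,\infty,T)$-approximate atoms. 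Since $h^1_g(X)\hookrightarrow L^1(X)\hookrightarrow bmo^\ast(X)$ (the last embedding being precisely the statement $bmo(X)=(h^1_g)^\ast(X)$ combined with duality), the series also converges to $f$ in $bmo^\ast(X)$. Therefore $f\in h^1_\#(X)$ and taking infimum over atomic decompositions gives $\norm{f}_{h^1_\#}\lesssim \norm{f}_{h^1_g}$.

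For the reverse inclusion $h^1_\#(X)\subseteq h^1_g(X)$, let $f\in h^1_\#(X)$ with decomposition $f=\sum_j\lambda_j a_j$ in $bmo^\ast(X)$ where the $a_j$ are $(1,q,T)$-approximate atoms and $\sum_j|\lambda_j|\leq 2\norm{f}_{h^1_\#}$. By the converse part of the atomic decomposition in \cite{DafniMomYue16} (Remark 7.4 therein, stating that approximate atoms have uniformly bounded $h^1_g$-norm), each $a_j\in h^1_g(X)$ with $\norm{a_j}_{h^1_g}\leq C$ uniformly in $j$. Hence the partial sums $S_N:=\sum_{j=1}^N\lambda_j a_j$ form a Cauchy sequence in $h^1_g(X)$ and converge to some $\tilde f\in h^1_g(X)$ with $\norm{\tilde f}_{h^1_g}\leq C\sum_j|\lambda_j|$. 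On the other hand $S_N\to f$ in $bmo^\ast(X)$ by hypothesis, and $S_N\to \tilde f$ in $h^1_g(X)\hookrightarrow L^1(X)\hookrightarrow bmo^\ast(X)$, so the two $bmo^\ast$-limits coincide and $\tilde f=f$. Taking the infimum over decompositions yields $\norm{f}_{h^1_g}\lesssim\norm{f}_{h^1_\#}$.

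The step that requires the most care is the identification $\tilde f = f$: one must verify that $L^1$-convergence indeed implies convergence in $bmo^\ast(X)$, which is a consequence of the duality $bmo(X)=(h^1_g)^\ast(X)$ established in \cite[Corollary 7.8]{DafniMomYue16} together with the continuous inclusion $L^1(X)\hookrightarrow h^1_g(X)^\ast\!{}^\ast$ via pairing against $bmo$ functions (using Lemma 6.1 of \cite{DafniMomYue16} to ensure the pairing $\int f\varphi\,d\mu$ is finite for $f\in L^1$ and $\varphi\in bmo$ on an appropriate dense subspace, or by testing directly against approximate atoms). No further cancellation analysis is needed beyond what is already encoded in the definitions of the atoms, so the argument is essentially bookkeeping about modes of convergence.
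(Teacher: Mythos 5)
Your overall architecture matches the paper's: decompose $f\in h^1_g(X)$ into local $(1,\infty,T)$-atoms, observe that these are $(1,\infty,T)$-approximate atoms, and for the converse use the uniform $h^1_g$-bound on approximate atoms to get Cauchy partial sums; in both directions the crux is identifying the $h^1_g$-limit with the $bmo^\ast$-limit. The problem is the device you use for that identification: the claimed chain $h^1_g(X)\hookrightarrow L^1(X)\hookrightarrow bmo^\ast(X)$. The second embedding is false (or at best undefined): a function in $bmo(X)$ is in general unbounded, so the pairing $\int f\varphi\,d\mu$ need not converge for arbitrary $f\in L^1(X)$ and $\varphi\in bmo(X)$, and $L^1$-convergence of the partial sums does not by itself give convergence in $bmo^\ast(X)$. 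The same issue already arises in your first inclusion, where you need to say in what sense the function $f$ \emph{is} an element of $bmo^\ast(X)$ before you can assert that the atomic series converges to it there. Your hedge at the end (``on an appropriate dense subspace, or by testing directly against approximate atoms'') does not repair this, because the limit is a general element of $h^1_g(X)\subset L^1(X)$, not a finite combination of atoms.

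The correct mechanism --- and the one the paper uses --- bypasses $L^1$ entirely: since $(h^1_g)^\ast(X)=bmo(X)$, every $f\in h^1_g(X)$ defines $\Gamma_f\in bmo^\ast(X)$ by $\langle\Gamma_f,\varphi\rangle:=\Lambda_\varphi(f)$, where $\Lambda_\varphi$ is the bounded functional on $h^1_g(X)$ represented by $\varphi$. This canonical embedding $h^1_g(X)\hookrightarrow \bigl((h^1_g)^\ast(X)\bigr)^\ast=bmo^\ast(X)$ is continuous, and since your partial sums converge in $h^1_g$-\emph{norm} (not merely in $L^1$), their images converge in $bmo^\ast(X)$ and the two limits coincide. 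With this substitution your argument closes; everything else (the observation that local atoms satisfy \eqref{momentop1}, the uniform $h^1_g$-bound on approximate atoms, the Cauchy-sequence argument, and the norm bookkeeping) is sound and coincides with the paper's proof.
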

\begin{proof}
	We start showing $  h^1_g(X) \subset h^1_\#(X)$ continuously. Given a locally integrable function $ f$ in $ h^1_g(X) $, follows by atomic decomposition theorem mentioned before (\cite[Theorem 7.6]{DafniMomYue16}) that there exist a sequence of local $ (1,\infty,T) $-atoms 
	$ \lla{a_j}_{j} $ 
	and a sequence of coefficients $ \lla{\lambda_j}_{j}\in \ell^1(\C)$  such that
	\begin{equation}\label{eq:2000}
			f=\sum_{j=1}^\infty \lambda_j a_j, \qquad \text{in} \,\,{h^1_g (X)} 
		\end{equation}
and consequently in  
	$ L^1(X) $ satisfying $ \sum_{j=1}^{\infty} \abs{\lambda_j} \leq C \norm{f}_{h^1_g} $,
	for some positive constant $ C $ independent of $ f $. 
	Let $ \vp\in bmo $ and denote by $ \Lambda_\vp $ the identification as element in $ (h^{1}_g)^\ast(X) $ associated to $ \vp $ (\cite[Corollary 7.8]{DafniMomYue16}). Note that any element $ f\in h^1_g(X) $ defines an element $ \Gamma_f: bmo(X)\to \C $ given by 
	$ \ip{\Gamma_f,\vp}:= \Lambda_\vp( f )$,
	and also
	\[
	\abs{ \ip{\Gamma_f,\vp} }\leq \norm{\Lambda_\vp}_{(h^1_g)^*} \norm{f}_{h^1_g}\approx \norm{\vp}_{bmo}\norm{f}_{h^1_g},
	\]
	for any $ \vp \in bmo(X) $.
	Thus we have
	\[
	\abs{ \ip{\Gamma_f -\sum_{j=1}^n \lambda_j a_j,\vp} } = \abs{ \Lambda_\vp\pare{f-\sum_{j=1}^n \lambda_ja_j} }  \lesssim \norm{\vp}_{bmo} \Norm{ f - \sum_{j=1}^n \lambda_j a_j}_{h^1_g}
	\]
	for any $ \vp\in bmo(X) $
	that implies
	\[
	\Gamma_f = \sum_{j=1}^\infty \lambda_j a_j, \qquad \text{in }\quad  bmo^*(X),
	\]
	and consequently
	$
	\norm{ \Gamma_f }_{h^1_\#}\leq C\norm{f}_{h^1_g}
	$, as desired.\\

	For the other inclusion,
	let 
	$ F\in  h^1_\#(X) $. Then there exist $ \lla{\lambda_j}_j\in \ell^1(\C) $ and a sequence of $ (1,\infty, T)$-approximate atoms $ \lla{a_j}_{j} $ such that $ F=\sum_{j=1}^\infty \lambda_j a_j $ in $ bmo^*(X) $.
	Note that  $ \lla{\sum_{j=1}^n \lambda_j a_j}_n $ is a Cauchy sequence in $ h^1_g(X) $ from Proposition 7.5 in \cite{DafniMomYue16}.
	By completeness it has to converge to some $ f\in h^1_{g}(X)\hookrightarrow L^1(X) $ continuously with 
			$ \norm{f}_{h^1_g}\leq C\sum_{j=1}^\infty \abs{\lambda_j} $.
	By the arbitrariness of the decomposition of $ F $, we obtain
	\begin{equation}\label{eq:2002} 
			\norm{f}_{h^1_g}\leq C\norm{F}_{h^1_{cw}}.
		\end{equation}
	We claim that $ f $ is well defined. In fact, if $ F=\sum_{j=1}^\infty \beta_jb_j $ in $ bmo^*(X) $ and  $ \sum \beta_jb_j $ converges to some $ \tilde{f} $ in $ h^1_g(X)$-norm, maintaining the notation $ \Lambda_\vp\in h^1_g(X)^\ast $ for any $ \vp\in bmo(X) $ as before, follows by \cite[Corollary 7.8]{DafniMomYue16}
	that
	\begin{align*}
			\Lambda_{\vp} ( \tilde{f} )&= \Lambda_{\vp} ( \lim_{n} \sum_{j=1}^n \beta_j b_j )=\lim_n \sum_{j=1}^n \beta_j\Lambda_\vp (b_j)= \lim_n \sum_{j=1}^n \beta_j \int b_j \vp \, d\mu = \ip{ F,\vp } \\
			&= \lim_n \sum_{j=1}^n \lambda_j \int a_j \vp \, d\mu = \lim_n \sum_{j=1}^n \lambda_j\Lambda_\vp (a_j) =\Lambda_{\vp} ( \lim_{n} \sum_{j=1}^n \lambda_j a_j ) = \Lambda_{\vp} ( f )
		\end{align*}
	for any $ \vp \in bmo(X)$. Follows by identification and duality $bmo(X)=(h^1_g)^\ast(X)$ that $f=\tilde{f}$ almost everywhere. 
	As consequence, $ h^1_\#(X) \subset h^1_g(X)$ continuously.
\end{proof}

The next couple of results are self-improvements of Theorems 7.6 and 7.7 in \cite{DafniMomYue16}, that allow us to avoid the equivalence between norms used in Theorem \ref{teo:boundextenCZO} and implicitly in Theorem \ref{cor:last}. 
The first is a special Calder\'on-Zygmund type decomposition.

\begin{theorem}\label{teo:Dafni16-7.7}
	Given $ f\in L^1_{loc}(X)$,
	$ \alpha>0 $, $ C_0>4{A_0} $ we can write 
	\[
	f=g+b, \qquad b=\sum_{k=1}^\infty b_k
	\]
	for some functions $ g, b_k$ and a sequence of balls $ \lla{B_k}_{k=1}^\infty $ satisfying
	\begin{enumerate}[(i)]
		\item $ \norm{g}_\infty \leq c\alpha $ for some $ c\geq 1 $ depending on $ C_0 $, $ {A_0} $, $ \alpha $, $ C_d $ and $ A^\prime $;\\
		\item $ supp(b_k)\subset B^\ast_k:=C_0B_k $ and 
		\[
		\int b_k\, d\mu = 0, \quad \text{ when  } r(B_k^\ast)<\frac{T}{4(k^\prime)^2};
		\]
		\item
		\[
		(iii.1) \qquad \qquad \norm{b_k}_{L^1}\leq 2c \int_{B^\ast_k} \mathcal{M}_\mathcal{F} f \; d\mu,
		\]
		and
		\[
		(iii.2) \qquad \qquad \norm{b_k}_{L^2}^2\leq 4c^2 \int_{B^\ast_k} (\mathcal{M}_\mathcal{F} f)^2 \; d\mu;
		\]
		\item the balls $ B_k^* $ have bounded overlap and
		\[
		\bigcup B_k^\ast =\lla{ x\in X: M_\mathcal{F}f(x)>\alpha }.
		\]
	\end{enumerate}
\end{theorem}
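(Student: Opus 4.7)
The approach is a Whitney-type Calderón–Zygmund decomposition associated to the level sets of the local grand maximal function $\mathcal{M}_{\mathcal{F}}$, refined so that the vanishing-moment requirement on the "bad" pieces $b_k$ is imposed only on sufficiently small Whitney balls. Specifically, I would begin by defining $\Omega_\alpha:=\{x\in X:\mathcal{M}_{\mathcal{F}}f(x)>\alpha\}$ and verifying that it is open (by lower semicontinuity of $\mathcal{M}_{\mathcal{F}}f$, which follows from the $\alpha$-Hölder regularity of the test functions in $\mathcal{F}_x$ together with \eqref{eq:holqm}). Assuming $\Omega_\alpha\neq X$ (the complementary case is trivial), I would apply a Whitney covering lemma, available in any space of homogeneous type, to obtain balls $\{B_k=B(x_k,r_k)\}_k$ with $\bigcup_k B_k=\Omega_\alpha$, $r_k\approx \mathrm{dist}(x_k,\Omega_\alpha^c)$, $B_k^\ast:=C_0 B_k\subset\Omega_\alpha$, $C_0B_k\cap\Omega_\alpha^c\neq\emptyset$ (by choosing $C_0>4A_0$), and $\{B_k^\ast\}$ of bounded overlap with constant depending only on $A_0$, $A'$, $C_0$. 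A subordinate Lipschitz partition of unity $\{\phi_k\}$ (built from the regularized quasi-metric $\rho$ satisfying \eqref{eq:holqm}) is then constructed so that $0\le\phi_k\le 1$, $\mathrm{supp}(\phi_k)\subset B_k^\ast$, $\sum_k\phi_k=\mathds{1}_{\Omega_\alpha}$, and $\phi_k$ is $\alpha$-Hölder with bound of order $r_k^{-\alpha}$.

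Next I would define
\[
b_k:=\bigl(f-c_k\bigr)\phi_k,\qquad g:=f-\sum_k b_k = f\,\mathds{1}_{\Omega_\alpha^c}+\sum_k c_k\phi_k,
\]
with the constants chosen by cases: if $r(B_k^\ast)<T/(4(k')^2)$ pick $c_k=\bigl(\int f\phi_k\,d\mu\bigr)/\bigl(\int\phi_k\,d\mu\bigr)$, which immediately gives $\int b_k\,d\mu=0$; otherwise set $c_k=0$. To verify $(i)$, on $\Omega_\alpha^c$ one has $|f(x)|\le\mathcal{M}_{\mathcal{F}}f(x)\le\alpha$ a.e.\ by Lebesgue differentiation with respect to members of $\mathcal{F}_x$. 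On $\Omega_\alpha$, using any point $y_k\in C_0B_k\cap\Omega_\alpha^c$, the normalized ratio $\phi_k/\int\phi_k\,d\mu$ is (up to a constant depending on $C_0,A_0,C_d,A'$) admissible as a test function in $\mathcal{F}_{y_k}$, provided $r_k\lesssim T$; this forces $|c_k|\le C\mathcal{M}_{\mathcal{F}}f(y_k)\le C\alpha$ in the small-ball regime. For the large-ball regime ($c_k=0$), the contribution to $g$ on $B_k^\ast$ vanishes, so $|g|$ is again controlled. Bounds $(iii.1)$ and $(iii.2)$ follow from $\|b_k\|_{L^p}^p\lesssim \int_{B_k^\ast}|f|^p\,d\mu+|c_k|^p|B_k^\ast|$ and the Whitney property $\alpha^p|B_k^\ast|\lesssim \int_{B_k^\ast}(\mathcal{M}_{\mathcal{F}}f)^p\,d\mu$; statement $(iv)$ is part of the Whitney data.

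The main obstacle is reconciling the scale constraint $t<4A_0^2 T$ embedded in the definition of $\mathcal{F}_x$ with the threshold $T/(4(k')^2)$ separating balls that must carry vanishing moment from those that need not. For balls whose radius is comparable to, or larger than, this threshold one cannot use $\phi_k/\int\phi_k$ as a test function in $\mathcal{F}_{y_k}$ to extract $|c_k|\lesssim\alpha$; instead one has to introduce an intermediate truncated test function at scale $\sim T$, which loses a factor polynomial in $k'$ that is absorbed precisely because the denominator is $(k')^2$ (giving a summable sequence). Carrying this tail bookkeeping through the partition of unity and showing that the resulting $|c_k|$'s still satisfy $\sum_k|c_k|\phi_k\in L^\infty$ with $\|\cdot\|_\infty\lesssim\alpha$ is the delicate technical step; everything else reduces to standard Whitney/atomic estimates already used in \cite[Theorem 7.7]{DafniMomYue16}.
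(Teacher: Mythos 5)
Your construction is the standard Whitney/Calder\'on--Zygmund decomposition relative to the level set $\Omega_\alpha=\lla{\mathcal{M}_{\mathcal{F}}f>\alpha}$, and this is precisely the route the paper takes: the paper offers no independent proof here, but invokes \cite[Theorem 7.7]{DafniMomYue16} and only remarks that the new item $(iii.2)$ follows ``using the same steps as $(iii.1)$'' --- namely, as in your sketch, from $\abs{f}\lesssim \mathcal{M}_{\mathcal{F}}f$ a.e., $\abs{c_k}\lesssim \alpha$, and $\alpha^2\abs{B_k^\ast}\leq \int_{B_k^\ast}(\mathcal{M}_{\mathcal{F}}f)^2\,d\mu$, which holds because $B_k^\ast\subset\Omega_\alpha$. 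The core of your second paragraph is correct and matches the source.

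Two points need correcting. First, you assert both $B_k^\ast=C_0B_k\subset\Omega_\alpha$ and $C_0B_k\cap\Omega_\alpha^c\neq\emptyset$, which are incompatible; the Whitney data are $B_k^\ast\subset\Omega_\alpha$ (consistent with item (iv)) together with $\tilde C B_k\cap\Omega_\alpha^c\neq\emptyset$ for a strictly larger dilation constant $\tilde C>C_0$, and it is that larger dilate which supplies the point $y_k$ used to test $\phi_k/\int\phi_k\,d\mu$. Second, and more substantively, the ``main obstacle'' of your last paragraph rests on a misreading of the statement: $k'$ is not an index running with $k$ but the fixed bounded-overlap constant of the Whitney cover, so $T/(4(k')^2)$ is a fixed fraction of $T$ and there is no summable sequence $\sum_k (k')^{-2}$ to exploit. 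In fact your own construction makes the obstacle vanish: for balls with $r(B_k^\ast)\geq T/(4(k')^2)$ you set $c_k=0$, so no test-function estimate for $c_k$ is required there, and the bound $\nnorm{\sum_k c_k\phi_k}_{L^\infty}\lesssim\alpha$ follows solely from $\abs{c_k}\lesssim\alpha$ on the small balls together with the bounded overlap of the $B_k^\ast$; no summation over $k$ enters. The only role of the scale restriction $t<4A_0^2T$ in $\mathcal{F}_{y_k}$ is to verify admissibility of the recentred normalized bump for the \emph{small} balls, and this is exactly what the threshold $T/(4(k')^2)$ guarantees after the dilations by $A_0$, $C_0$ and $\tilde C$. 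With these repairs your argument is the intended one.
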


The novelty here in comparison to Theorem 7.7 in \cite{DafniMomYue16} is the control ($ iii.2 $) that can be proved using the same steps as ($ iii.1 $). 

\begin{theorem}\label{teo:Dafni16-7.6}
	If $ f\in \left(h^1_g\cap L^2\right)(X) $, then there exist a sequence of local $ (1,\infty, T) $-atoms $ \lla{a_j}_{j} $ and a sequence of coefficients $ \lla{\lambda_j}_{j}\in {\ell^1(\C)} $ such that
	\begin{equation}\label{eq:2004}
	f=\sum \lambda_j a_j
	\end{equation}
	with convergence in $ h^1_g(X)$ and $ L^2(X)$. Moreover, $ \sum \abs{\lambda_{j}}\leq C\norm{f}_{h^1_g} $ for some positive constant $ C $ independent of $ f $.
\end{theorem}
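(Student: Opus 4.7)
The plan is to perform the Calder\'on-Zygmund decomposition of Theorem \ref{teo:Dafni16-7.7} at each dyadic level $\alpha_k = 2^k$, $k \in \Z$, obtaining $f = g^k + b^k$ with $b^k = \sum_j b^k_j$ and $\supp(b^k_j) \subset B^{k,*}_j$, where $\bigcup_j B^{k,*}_j = \Omega_k := \{ \M_\F f > 2^k \}$ has bounded overlap. A dyadic telescoping then expresses $f$ as a sum of differences $g^{k+1} - g^k$, each of which admits an atomic splitting. The crucial new input compared with the argument in \cite[Theorem 7.6]{DafniMomYue16} is the $L^2$-control (iii.2) of Theorem \ref{teo:Dafni16-7.7}: it upgrades the convergence of the resulting atomic series from $h^1_g$ alone to the stronger simultaneous convergence in $h^1_g(X)$ and $L^2(X)$.

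First I would establish $L^2$-convergence of $g^k \to f$ as $k \to \infty$. The bounded overlap of $\{B^{k,*}_j\}_j$ together with (iii.2) gives
\[
\|b^k\|_{L^2}^2 \lesssim \sum_j \|b^k_j\|_{L^2}^2 \lesssim \int_{\Omega_k} (\M_\F f)^2\, d\mu,
\]
and since $f \in L^2(X)$ forces $\M_\F f \in L^2(X)$ (via pointwise domination by the Hardy--Littlewood maximal function and its $L^2$-boundedness) while $|\Omega_k| \to 0$, dominated convergence yields $\|b^k\|_{L^2} \to 0$. At the low end, the embedding $h^1_g(X) \hookrightarrow L^1(X)$ combined with (iii.1) gives $\|b^{k_0}\|_{L^1} \lesssim \|f\|_{h^1_g}$ and hence $\|g^{k_0}\|_{L^1} \lesssim \|f\|_{h^1_g}$ uniformly; together with (i) this yields $\|g^{k_0}\|_{L^2}^2 \leq \|g^{k_0}\|_\infty \|g^{k_0}\|_{L^1} \lesssim 2^{k_0} \|f\|_{h^1_g} \to 0$ as $k_0 \to -\infty$. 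Thus the telescoping identity $f = \sum_{k \in \Z}(g^{k+1}-g^k)$ holds in $L^2(X)$; the same argument using (iii.1) delivers convergence in $L^1(X)$ and, via the maximal characterization, also in $h^1_g(X)$.

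Each difference $g^{k+1} - g^k$ is then split into local $(1,\infty,T)$-atoms by the standard construction: essentially $g^{k+1} - g^k = \sum_j h^k_j$ with $\supp h^k_j \subset B^{k,*}_j$, $\|h^k_j\|_\infty \leq c' 2^k$, and satisfying the local vanishing-moment condition inherited from item (ii) of Theorem \ref{teo:Dafni16-7.7} whenever the radius of $B^{k,*}_j$ is sufficiently small. Factoring $h^k_j = \lambda^k_j a^k_j$ with $a^k_j$ a local $(1,\infty,T)$-atom and $\lambda^k_j \lesssim 2^k |B^{k,*}_j|$, the bounded overlap of the $B^{k,*}_j$ and the layer-cake identity give
\[
\sum_{k,j} |\lambda^k_j| \lesssim \sum_k 2^k |\Omega_k| \lesssim \|\M_\F f\|_{L^1} = \|f\|_{h^1_g}.
\]

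The main obstacle is the simultaneous convergence of $\sum_{k,j} \lambda^k_j a^k_j$ in both $h^1_g(X)$ and $L^2(X)$. The $h^1_g$-part is not hard: it follows from the $\ell^1$ bound on the coefficients together with the uniform $h^1_g$-norm bound on local $(1,\infty,T)$-atoms recalled above from \cite{DafniMomYue16}. The $L^2$-part, however, does not follow from the atomic structure alone (local atoms are only $L^\infty$-normalized, so their $L^2$ norms are not uniformly bounded), and must instead be read off the $L^2$-telescoping identity established above, which relies critically on (iii.2). A minor technicality is the mismatch between the moment threshold $T/(4(k')^2)$ in (ii) and the fixed threshold $T$ in the definition of local $(1,\infty,T)$-atoms, but as pointed out in Remark \ref{rem:approx_atoms}(ii), changing this threshold only alters constants uniformly and can be absorbed into the final constant $C$.
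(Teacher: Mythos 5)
Your proposal is correct and follows essentially the same route as the paper: run the Calder\'on--Zygmund decomposition of Theorem \ref{teo:Dafni16-7.7} at dyadic heights, telescope, split into local atoms exactly as in \cite[Theorem 7.6]{DafniMomYue16}, and use the new $L^2$-control (iii.2) together with bounded overlap to get $\|b^k\|_{L^2}\to 0$ as $k\to\infty$ and an $L^\infty$--$L^1$ bound to kill $\|g^{k_0}\|_{L^2}$ as $k_0\to-\infty$ (the paper does this last step by splitting over $\Omega_j$ and its complement instead, a cosmetic difference). You correctly identify (iii.2) as the only genuinely new ingredient beyond the cited reference.
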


\begin{proof}
Since the maximal Hardy-Littlewood operator $ \M $ is bounded in $L^{2}(X)$ and $ \M_\F f(x)\leq \M f(x) $ (see \cite[pp. 202]{DafniMomYue16}) then $ \M_\F f\in L^2(X) $. 
The proof follows the same steps as in \cite[Theorem 7.6]{DafniMomYue16} and the 
	convergence of \eqref{eq:2004} in $ L^2(X) $ follows from (iii.2) in Theorem \ref{teo:Dafni16-7.7}, since using the same notation from mentioned result we have  
	\begin{equation*}
	\|f-g_{j}\|_{L^{2}}=\|b_{j}\|_{L^{2}}\leq \sum_{k=1}^{\infty}\|b_{j}^{k}\|_{L^{2}}\leq {2}c \sum_{k} \int_{(B^{k}_{j})^*}[\mathcal{M}_{\mathcal{F}}(f)]^{2}d\mu \lesssim \int_{\left\{ \mathcal{M}_{\mathcal{F}}(f)>2^{j} \right\}} [\mathcal{M}_{\mathcal{F}}(f)]^{2}d\mu,
	\end{equation*}
	and
\begin{align*}
	\norm{g_j}_{L^2}^2 \leq \int_{U^j} \abs{g_j}^2  d\mu +\int_{F^j} \abs{g_j}^2  d\mu &\lesssim  2^{2j} \mu(\lla{\mathcal{M_\mathcal{F}}f(x)>2^j})+ \int_{\lla{\mathcal{M_F}f(x)\leq 2^j}} [\mathcal{M_F}f(x)]^2 d\mu(x)
\end{align*}
where the constants are independent of $f$ and $j$. Clearly, the terms in the right hand side in the last couple of inequalities go to zero when $j \rightarrow {\pm \infty}$ (for more details see \cite[pp. 210]{DafniMomYue16}).
\end{proof}

Now we are ready to establish the boundedness of Calder\'on-Zygmund operators in $ h^1_g(X)$.

\begin{theorem}\label{teo:boundextenCZOp1g}
	Let $ R $ be an inhomogeneous Calder\'on-Zygmund operator of order $ (\nu,s) $. If there exists $C>0$ such that for any ball $B:=B(x_{B},r_{B})\subset X$ with $r_{B}<T$ we have that $f:=R^{\ast}(1)$ satisfies 
	\begin{equation*}\label{eq:atomtomol3p1g}
			\bigg( \fint_B \abs{f-f_B}^2 d\mu \bigg)^{1/2} \leq C \frac{2}{\log(2+{T}/{r_B})},
	\end{equation*}
	then the operator $ R $ can be extended as a linear bounded operator on $ h^1_g(X) $.
\end{theorem}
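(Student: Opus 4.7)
The strategy is to imitate the proof of Theorem \ref{teo:boundextenCZO}, but to sidestep the fragile requirement $\|\cdot\|_{h^{p,2}_{\mathrm{fin}}}\approx\|\cdot\|_{h^{p}_{cw}}$ (which has no ready-made $p=1$ analogue for $h^{1}_{g}(X)$) by using the sharpened Calder\'on--Zygmund decomposition of Theorem \ref{teo:Dafni16-7.6}, whose atomic series converges \emph{simultaneously} in $h^{1}_{g}(X)$ and in $L^{2}(X)$. Thus I would first restrict attention to $f\in (h^{1}_{g}\cap L^{2})(X)$ and write
\[
f=\sum_{j=1}^{\infty}\lambda_{j}a_{j}
\]
in both $h^{1}_{g}(X)$ and $L^{2}(X)$, with local $(1,\infty,T)$-atoms $a_{j}$ (hence local $(1,2,T)$-atoms) and $\sum_{j}|\lambda_{j}|\leq C\|f\|_{h^{1}_{g}}$. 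Because $R$ is bounded on $L^{2}(X)$, the $L^{2}$-convergence transfers: $Rf=\sum_{j}\lambda_{j}Ra_{j}$ in $L^{2}(X)$.

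Next I would feed each $a_{j}$ through Proposition \ref{prop:atomtomol3p1}, whose cancellation hypothesis on $R^{\ast}(1)$ is exactly the one assumed here. This yields that every $Ra_{j}$ is, up to a constant independent of $j$, a $(1,2,T,\lambda)$-approximate molecule with $\lambda_{k}=2^{-k\min\{\nu,s\}}$, which satisfies \eqref{lambda-type-conditionp1}. Proposition \ref{prop:unifboundmolp1} then provides the uniform bound $\|Ra_{j}\|_{1,2}\leq C'$, so the partial sums of $\sum_{j}\lambda_{j}Ra_{j}$ form a Cauchy sequence in $(h^{1}_{\#}(X),\|\cdot\|_{1,2})$ with limit $g\in h^{1}_{\#}(X)$ satisfying
\[
\|g\|_{1,2}\;\leq\;C'\sum_{j}|\lambda_{j}|\;\leq\;C C'\|f\|_{h^{1}_{g}}.
\]
Invoking Proposition \ref{prop:2003}, namely $h^{1}_{\#}(X)=h^{1}_{g}(X)$ with equivalent norms, I transport the estimate and the convergence to $h^{1}_{g}(X)$, and through the continuous embedding $h^{1}_{g}(X)\hookrightarrow L^{1}(X)$ to $L^{1}(X)$.

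At this point the main obstacle appears: I have two candidate limits for the partial sums, namely $Rf$ in $L^{2}$ and $g$ in $L^{1}$ (and $h^{1}_{g}$). I would reconcile them by passing to a common subsequence converging almost everywhere, giving $Rf=g$ $\mu$-a.e., and therefore $\|Rf\|_{h^{1}_{g}}\lesssim\|f\|_{h^{1}_{g}}$ for all $f\in(h^{1}_{g}\cap L^{2})(X)$. Finally, since $(h^{1}_{g}\cap L^{2})(X)$ contains all finite combinations of local $(1,\infty,T)$-atoms and is therefore dense in $h^{1}_{g}(X)$ by the Coifman--Weiss style atomic decomposition (\cite{DafniMomYue16}), a standard density argument promotes $R$ to a bounded linear operator on the whole of $h^{1}_{g}(X)$, with operator norm controlled by the data in \eqref{eq:atomtomol3p1g} and by $\|R\|_{L^{2}\to L^{2}}$.
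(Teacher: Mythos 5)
Your proposal is correct and follows essentially the same route as the paper: restrict to $(h^1_g\cap L^2)(X)$, use the Calder\'on--Zygmund decomposition of Theorem \ref{teo:Dafni16-7.6} converging simultaneously in $h^1_g(X)$ and $L^2(X)$, map atoms to approximate molecules via Proposition \ref{prop:atomtomol3p1} to get the uniform bound $\norm{Ra_j}_{h^1_g}\leq C$ through Proposition \ref{prop:2003}, and identify the $L^2$-limit $Rf$ with the $h^1_g$-limit by passing to an a.e.\ convergent subsequence, concluding by density. The only cosmetic difference is that you route the uniform atom bound explicitly through Proposition \ref{prop:unifboundmolp1} and the space $h^1_\#(X)$, which the paper leaves implicit in its citation of Propositions \ref{prop:atomtomol3p1} and \ref{prop:2003}.
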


\begin{proof}
	Since $ h^{1,2}_{fin}(X)\subset (h^1_g\cap L^2)(X) $ and $ h^{1,2}_{fin}(X)$ is a dense subset of $ (h^1_g(X),\norm{\cdot}_{h^1_g}) $, it will be sufficient to prove that 
	\begin{equation}\label{eq:2006}
		\norm{Rf}_{h^1_g}\leq C\norm{f}_{h^1_g}, \quad \forall \, f\in (h^1_g\cap L^2)(X). 
	\end{equation}
From Theorem \ref{teo:Dafni16-7.6}, given $ f\in (h^1_g\cap L^2)(X) $ consider the decomposition \eqref{eq:2004}. Thus, by the continuity of $ R $ on $ L^2 $, we obtain 
	\begin{equation}\label{eq:2005}
		Rf=\sum_k \lambda_k Ra_j 
	\end{equation}
	with convergence in $ L^2(X) $. We claim that the decomposition in \eqref{eq:2005} converges also in $ h^1_g(X)$-norm. In fact, by Propositions \ref{prop:atomtomol3p1} and \ref{prop:2003} there exists a constant $ C>0 $ such that 
	\begin{equation*}
		\norm{Ra_j}_{h^1_g}\leq C
	\end{equation*}
	for any local $ (1,2,T)$-atom $ a_j $ and then
	\begin{align*}
		\Norm{\sum_{k=1}^n \lambda_jRa_j}_{h^1_g}= \Norm{\mathcal{M_F}\pare{\sum_{k=1}^n \lambda_jRa_j}}_{L^1} \leq \sum_{k=1}^n \abs{\lambda_j} \norm{Ra_j}_{h^1_g} \leq C \sum_{j=1}^n \abs{\lambda_j}.
	\end{align*}
	This shows that the sequence of partial sums $ S_n:=\sum_{k=1}^n \lambda_jRa_j$ is a Cauchy sequence in $ h^1_g(X)$, then it converges to some $ F\in h^1_g(X)$ and
	\begin{equation*}
		\norm{F}_{h^1_g}\leq C \norm{f}_{h^1_g}.
	\end{equation*}
	On the other hand, since $ f\in h^1_{g}(X)\hookrightarrow L^1(X) $ 
we have that the sequence of partial sums $ S_n $ also converges in $ L^1$-norm to $ F $. Taking subsequences of $ S_n $ and from \eqref{eq:2005} we obtain that $ F=Rf $ almost everywhere. This shows \eqref{eq:2005} converges in $ h^1_g(X)$-norm and therefore we can establish \eqref{eq:2006}.
\end{proof}

Now we compare the spaces $h^{1}(X)$ and $h^{1}_{g}(X)$.

\begin{proposition}
Let $ x_1\in X $ and $ \theta\in (0,\infty) $. If $\psi$ is $ \alpha $-H\"{o}lder continuous function supported in a ball $ B(x_{1},r)$  satisfying \eqref{eq:224} then $ \psi\in \mathcal{G}(x_1,r,{\alpha},\theta)$, where $ \alpha $ is the same constant appearing in \eqref{eq:holqm}. Moreover, there exists $C_{\mathcal{F}}^\prime>0$ independent of $\psi$ such that
\begin{equation*}
	\norm{\psi}_{\mathcal{G}(x_1,r,\alpha,\theta)} \leq C_{\mathcal{F}}^\prime. 
\end{equation*} 
\end{proposition}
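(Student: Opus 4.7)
The plan is to verify directly the size and regularity conditions in the definition of $\mathcal{G}(x_1,r,\alpha,\theta)$, using only the support assumption $\supp\psi\subset B(x_1,r)$ and the two bounds in \eqref{eq:224}, together with the doubling property \eqref{upper} and the quasi-triangle inequality \eqref{disntance}.

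For the size condition, if $x\notin B(x_1,r)$ both sides vanish, so there is nothing to do; if $x\in B(x_1,r)$, then $d(x_1,x)<r$ yields $V(x_1,x)\leq V_r(x_1)$, hence $V_r(x_1)+V(x_1,x)\leq 2V_r(x_1)$ and $r/(r+d(x_1,x))\geq 1/2$. Comparing $|\psi(x)|\leq C_{\mathcal F}/V_r(x_1)$ with the right hand side of the size bound gives the desired inequality with constant $2^{\theta+1}C_{\mathcal F}$.

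For the regularity condition, fix $x,y\in X$ with $d(x,y)\leq (2A_0)^{-1}(r+d(x_1,x))$. Since $\psi\equiv 0$ outside $B(x_1,r)$, the $\alpha$-H\"older control from \eqref{eq:224} extends to all of $X\times X$, giving
\[
|\psi(x)-\psi(y)|\leq \frac{C_{\mathcal F}}{r^\alpha V_r(x_1)}\,d(x,y)^\alpha.
\]
I would split into three cases. If both $x,y$ lie outside $B(x_1,r)$ the inequality is trivial. If $x\in B(x_1,r)$, then as above $V_r(x_1)+V(x_1,x)\leq 2V_r(x_1)$ and $r+d(x_1,x)<2r$, so the right hand side of the regularity bound is bounded below (up to the overall constant $C$) by $2^{-(\alpha+\theta+1)}r^{-\alpha}V_r(x_1)^{-1}d(x,y)^\alpha$, which matches the previous display. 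The remaining case, $x\notin B(x_1,r)$ and $y\in B(x_1,r)$, requires a bit more care: from $d(x_1,y)<r$, the quasi-triangle inequality \eqref{disntance}, and the regularity constraint I obtain
\[
d(x_1,x)\leq A_0\bigl(d(x_1,y)+d(y,x)\bigr)<A_0 r+\tfrac12\bigl(r+d(x_1,x)\bigr),
\]
hence $d(x_1,x)\leq(2A_0+1)r$. Therefore $r+d(x_1,x)\leq(2A_0+2)r$ and, by \eqref{upper}, $V_r(x_1)+V(x_1,x)\leq A'(2A_0+1)^\gamma V_r(x_1)$. All the scale-invariant ratios appearing on the right hand side of the regularity bound are then controlled uniformly, and the inequality follows with a constant depending only on $C_{\mathcal F}$, $A_0$, $A'$, $\alpha$, $\theta$ and $\gamma$.

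The main point is this last case: the derived bound $d(x_1,x)\leq (2A_0+1)r$ is what prevents the doubling ratio $[V_r(x_1)+V(x_1,x)]/V_r(x_1)$ from being arbitrarily large, and so prevents a uniform estimate on $\|\psi\|_{\mathcal{G}(x_1,r,\alpha,\theta)}$ from failing. Without exploiting the constraint $d(x,y)\leq (2A_0)^{-1}(r+d(x_1,x))$ to extract this restriction on $d(x_1,x)$, one would be left with an unbounded factor $\bigl((r+d(x_1,x))/r\bigr)^{\alpha+\theta+\gamma}$ on the right hand side.
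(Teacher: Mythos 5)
Your proof is correct and follows essentially the same route as the paper's: the size bound via $V_r(x_1)+V(x_1,x)\leq 2V_r(x_1)$ and $r+d(x_1,x)\leq 2r$ on the support, and the regularity bound by the same three-case split, with the key step in the case $x\notin B(x_1,r)$, $y\in B(x_1,r)$ being exactly the derivation $d(x_1,x)\leq(2A_0+1)r$ from the constraint $d(x,y)\leq(2A_0)^{-1}(r+d(x_1,x))$ followed by doubling. No substantive differences.
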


\begin{proof}
	
	Let $ x \in B(x_1,r) $. Since 
	\begin{equation}\label{eq:4000}
		V(x_1,x)+V_r(x_1)\leq 2V_r(x_1) , \quad \text{ and } \quad d(x_1,x)+r\leq 2r  
	\end{equation} 
	we obtain from the first inequality in \eqref{eq:224} that
	\begin{equation}\label{eq:2108}
		\abs{\psi(x)}\leq \frac{C_\mathcal{F}}{V_{r}(x_{1})}\leq  \frac{2^{\theta+1} C_\mathcal{F}}{ V(x_1,x) + V_r(x_1) }\pare{ \frac{r}{d(x_1,x)+r} }^\theta.
	\end{equation}
Note that \eqref{eq:2108} is trivially valid for $ x\in B(x_1,r)^\complement $, since $\psi(x)=0$.

	Now, let $ x,y\in X $ with 
	\begin{equation}\label{eq:2102}
		d(x,y)\leq (2A_0)^{-1}(r+d(x_1,x)) .	
	\end{equation}
	Firstly, suppose that $ x\in B(x_1,r) $.  
	Then by \eqref{eq:4000} and the second inequality in \eqref{eq:224}, we obtain
	\begin{align}
		\abs{ \psi(x)-\psi(y) }&\leq d(x,y)^\alpha  \frac{C_\mathcal{F}}{r^\alpha V_r(x_1)} \nn \\
		& \leq 2^{1+\theta+\alpha} C_\mathcal{F} \left[ \frac{d(x,y)}{r+d(x_1,x)} \right]^\alpha \frac{1}{ V_r(x_1)+V(x_1,x) }
		\left[\frac{r}{r+ d(x_1,x) }\right]^\theta. \label{eq:2106}
	\end{align}
	Now suppose that $ y\in B(x_1,r) $. From \eqref{eq:2102} we obtain $ d(x_1,x) \leq (2A_0+1)r$ and so by \eqref{upper} we have $ V(x_1,x)\leq A^\prime(2A_0+1)^\gamma V_r(x_1) $. Thereby from the second inequality in \eqref{eq:224},
	we have
	\begin{align}
		\abs{ \psi(x)-\psi(y) }&\leq d(x,y)^\alpha  \frac{C_\mathcal{F}}{r^\alpha V_r(x_1)} \nn \\
		& \leq (2A_0+2)^{\alpha+\theta}  \left[ \frac{d(x,y)}{r+d(x_1,x)} \right]^\alpha \left[\frac{r}{r+ d(x_1,x) }\right]^\theta C_\mathcal{F} \,  \frac{A^\prime(2A_0+1)^{\gamma}+1}{ V_r(x_1)+V(x_1,x) }        \nn \\
		& \leq C \left[ \frac{d(x,y)}{r+d(x_1,x)} \right]^\alpha \frac{1}{ V_r(x_1)+V(x_1,x) }  \left[\frac{r}{r+ d(x_1,x) }\right]^\theta, \nn 
	\end{align}
	with $C'_{\mathcal{F}}:=[A^\prime(2A_0+1)^{\gamma}+1] (2A_0+2)^{\alpha+\theta}C_\mathcal{F}$. Clearly, if $ x,y\in B(x_1,r)^\complement $ then the previous control trivially holds. 

Summarizing the inequalities, we conclude that $ \psi\in \mathcal{G}(x_1,r,\alpha,\theta)$ and moreover
\begin{equation*}
	\norm{\psi}_{\mathcal{G}(x_1,r,\alpha,\theta)} \leq C_{\mathcal{F}}^\prime 
\end{equation*}
uniformly in  $ x_1 $ and $r$.
\end{proof}
A direct consequence of the previous result is the following: if $ \alpha,\theta\in (0,\eta] $ and $ T:=(4A_0^2)^{-1} $, we obtain
\begin{equation}\label{eq:4010}
	\mathcal{M}_{\mathcal{F}}f(x)\leq C_\mathcal{F}^\prime\, f^\ast_0(x)
\end{equation}
for any $ x\in X $ and $ f\in (\mathcal{G}^\eta_0(\alpha,\theta))^\ast \cap L^1_{loc}(X)$. In this way, we state  the next result.
\begin{proposition}\label{propf}
	Let $ T=(4A_0^2)^{-1} $ and $ \alpha,\theta\in(0,\eta] $. Considering $ h^1(X) $ as a subspace of $  (\mathcal{G}^\eta_0(\alpha,\theta))^\ast $, we have $ h^1(X)\cap L^1_{loc}(X) = h^1_g(X) $, with equivalent norms. 
\end{proposition}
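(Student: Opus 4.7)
The plan is to establish the two inclusions separately. The forward direction, $h^1(X) \cap L^1_{loc}(X) \subset h^1_g(X)$, is essentially immediate from estimate \eqref{eq:4010}: if $f \in h^1(X) \cap L^1_{loc}(X)$, then integrating the pointwise inequality $\mathcal{M}_\mathcal{F} f \leq C'_\mathcal{F} \, f_0^*$ gives $\|f\|_{h^1_g} \leq C'_\mathcal{F} \|f\|_{h^1}$, so the only substantive content of this direction is the maximal function comparison already proved.

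For the reverse inclusion, I would proceed by atomic decomposition. Given $f \in h^1_g(X)$, the continuous embedding $h^1_g(X) \hookrightarrow L^1(X)$ places $f$ in $L^1(X) \subset L^1_{loc}(X)$ automatically and lets $f$ act as a distribution on $(\mathcal{G}_0^\eta(\alpha,\theta))^*$ via $\varphi \mapsto \int f\varphi\, d\mu$ (recall that every $\varphi \in \mathcal{G}_0^\eta(\alpha,\theta)$ satisfies $|\varphi(x)| \leq C/V_1(x_0)$, so the pairing is a bounded functional on $L^1$). The atomic decomposition of \cite[Theorem 7.6]{DafniMomYue16} then produces local $(1,\infty,T)$-atoms $\{a_j\}$ and coefficients $\{\lambda_j\}\in\ell^1(\C)$ with $f=\sum_j\lambda_j a_j$ in $h^1_g$-norm (hence in $L^1$-norm) and $\sum_j|\lambda_j|\leq C\|f\|_{h^1_g}$. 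Because test functions in $\mathcal{G}_0^\eta(\alpha,\theta)$ are uniformly bounded, the $L^1$-convergence transfers to convergence in $(\mathcal{G}_0^\eta(\alpha,\theta))^*$, and the sublinearity of the grand maximal function yields the pointwise domination
\[
f_0^*(x) \leq \sum_{j=1}^\infty |\lambda_j|\, (a_j)_0^*(x).
\]

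Integrating this inequality reduces everything to the uniform estimate $\|(a_j)_0^*\|_{L^1}\leq C$ for local $(1,\infty,T)$-atoms, which is precisely the content of \cite[Lemma 4.2]{HeYangWen21} (the $p=1$ case, whose proof proceeds exactly as in the argument establishing our Proposition \ref{teo:670}, with the vanishing moment condition replacing the approximate moment decay). Summing in $j$ then gives $\|f\|_{h^1} = \|f_0^*\|_{L^1} \leq C \sum_j|\lambda_j| \leq C'\|f\|_{h^1_g}$, which completes the proof of the equivalence of norms.

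The only delicate point — and the step I expect to need the most care — is the reconciliation of the different localization parameters: the Dafni--Mom--Yue atomic decomposition is formulated with $T=(4A_0^2)^{-1}$, while the He--Yang--Wen maximal characterization of $h^1(X)$ is stated with $T=1$. As already observed in Remark \ref{rem:approx_atoms}(ii) and Remark \ref{rem:ell}, however, the local atom classes and the associated test function spaces corresponding to two values of $T$ are equivalent up to a uniform constant depending only on the doubling constants, so this mismatch introduces only a multiplicative factor in the final estimate without affecting the equivalence of the two Hardy space norms.
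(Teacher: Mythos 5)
Your argument is correct and follows essentially the same route as the paper: the forward inclusion via the pointwise comparison \eqref{eq:4010}, and the reverse inclusion via the Dafni--Mombourquette--Yue atomic decomposition in $L^1$, transferred to convergence in $(\mathcal{G}^\eta_0(\alpha,\theta))^\ast$. The only difference is that where the paper invokes \cite[Proposition 4.3]{HeYangWen21} as a black box, you unwind it into the sublinearity estimate $f_0^*\leq \sum_j|\lambda_j|(a_j)_0^*$ plus the uniform bound of \cite[Lemma 4.2]{HeYangWen21}, which is precisely how that proposition is proved; your remark on reconciling the localization parameters is also correctly resolved by the equivalences already recorded in the paper.
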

\begin{proof}
The continuous inclusion $ h^1(X)\cap L^1_{loc}(X) \subset h^1_g(X) $  follows directly from \eqref{eq:4010}.
		On the other hand, if $ f\in h^1_g(X) $, then $ f\in L^1(X) $ and by \cite[Theorem 7.1]{DafniMomYue16} there exist a sequence of local $ (1,\infty,T) $ atoms $ \lla{a_j}_j $ and a sequence of coefficients $ \lla{\lambda_j}_j \in \ell^1(\mathbb{C})$ such that $ f=\sum_j \lambda_j a_j $ in $ L^1(X) $. Then it follows by \eqref{lq} that the convergence of $ f=\sum_j \lambda_j a_j $ also holds in $ (\mathcal{G}^\eta_0(\alpha,\theta))^\ast $, and thereby from Proposition 4.3 in \cite{HeYangWen21} we obtain $ f\in h^1(X) $ with $ \norm{f}_{h^1}\leq C \norm{f}_{h^1_g} $, for a positive constant $ C $ independent of $ f $. Consequently, $ h^1_g(X)\subset h^1(X) $ continuously.
\end{proof}

\noindent {\textbf{Acknowledgments:} The authors would like to thank Marius Mitrea and Emmanuel Russ for suggesting this problem and the referee for their careful reading and useful suggestions and comments.}\\



\end{document}